\newtheorem{proposition}{Proposition}[section]
\newtheorem{lemma}[proposition]{Lemma}
\newtheorem{corollary}[proposition]{Corollary}
\newtheorem{theorem}[proposition]{Theorem}
\theoremstyle{definition}
\newtheorem{definition}[proposition]{Definition}
\newtheorem{example}[proposition]{Example}
\newtheorem{examples}[proposition]{Examples}
\newtheorem{remark}[proposition]{Remark}
\newtheorem{remarks}[proposition]{Remarks}
\newcommand{\thlabel}[1]{\label{th:#1}}
\newcommand{\thref}[1]{Theorem~\ref{th:#1}}
\newcommand{\selabel}[1]{\label{se:#1}}
\newcommand{\seref}[1]{Section~\ref{se:#1}}
\newcommand{\lelabel}[1]{\label{le:#1}}
\newcommand{\leref}[1]{Lemma~\ref{le:#1}}
\newcommand{\prlabel}[1]{\label{pr:#1}}
\newcommand{\prref}[1]{Proposition~\ref{pr:#1}}
\newcommand{\colabel}[1]{\label{co:#1}}
\newcommand{\coref}[1]{Corollary~\ref{co:#1}}
\newcommand{\relabel}[1]{\label{re:#1}}
\newcommand{\reref}[1]{Remark~\ref{re:#1}}
\newcommand{\exlabel}[1]{\label{ex:#1}}
\newcommand{\exref}[1]{Example~\ref{ex:#1}}
\newcommand{\delabel}[1]{\label{de:#1}}
\newcommand{\deref}[1]{Definition~\ref{de:#1}}
\newcommand{\eqlabel}[1]{\label{eq:#1}}
\newcommand{\equref}[1]{(\ref{eq:#1})}
\def\ot{\otimes}
\def\CC{{\mathbb C}}
\newcommand{\Cc}{\mathcal{C}}
\newcommand{\Mm}{\mathcal{M}}
\def\*C{{}^*\hspace*{-1pt}{\Cc}}
\def\text#1{{\rm {\rm #1}}}
\begin{document}
\title[Hochschild products and non-abelian cohomology for algebras]
{Hochschild products and global non-abelian cohomology for
algebras. Applications}


\author{A. L. Agore}
\address{Faculty of Engineering, Vrije Universiteit Brussel, Pleinlaan 2, B-1050 Brussels, Belgium \textbf{and} ''Simion Stoilow'' Institute of Mathematics of the Romanian Academy, P.O. Box 1-764, 014700 Bucharest, Romania} \email{ana.agore@vub.ac.be and
ana.agore@gmail.com}

\author{G. Militaru}
\address{Faculty of Mathematics and Computer Science, University of Bucharest, Str.
Academiei 14, RO-010014 Bucharest 1, Romania}
\email{gigel.militaru@fmi.unibuc.ro and gigel.militaru@gmail.com}

\thanks{A.L. Agore is Postdoctoral Fellow of the Fund for Scientific
Research-Flanders (Belgium) (F.W.O. Vlaanderen). This work was
supported by a grant of the Romanian National Authority for
Scientific Research, CNCS-UEFISCDI, grant no. 88/05.10.2011.}

\subjclass[2010]{16D70, 16S70, 16E40} \keywords{The extension
problem, non-abelian cohomology, classification of algebras}


\maketitle

\begin{abstract} Let $A$ be a unital associative algebra over a field $k$,
$E$ a vector space and $\pi : E \to A$ a surjective linear map
with $V = {\rm Ker} (\pi)$. All algebra structures on $E$ such
that $\pi : E \to A$ becomes an algebra map are described and
classified by an explicitly constructed global cohomological type
object ${\mathbb G} {\mathbb H}^{2} \, (A, \, V)$. Any such
algebra is isomorphic to a Hochschild product $A \star V$, an
algebra introduced as a generalization of a classical
construction. We prove that ${\mathbb G} {\mathbb H}^{2} \, (A, \,
V)$ is the coproduct of all non-abelian cohomologies ${\mathbb
H}^{2} \, \, (A, \, (V, \cdot))$. The key object ${\mathbb G}
{\mathbb H}^{2} \, (A, \, k)$ responsible for the classification
of all co-flag algebras is computed. All Hochschild products $A
\star k$ are also classified and the automorphism groups ${\rm
Aut}_{\rm Alg} (A \star k)$ are fully determined as subgroups of a
semidirect product $A^* \, \ltimes \bigl(k^* \times {\rm Aut}_{\rm
Alg} (A) \bigl)$ of groups. Several examples are given as well as
applications to the theory of supersolvable coalgebras or Poisson
algebras. In particular, for a given Poisson algebra $P$, all
Poisson algebras having a Poisson algebra surjection on $P$ with a
$1$-dimensional kernel are described and classified.
\end{abstract}

\section*{Introduction}
Introduced at the level of groups by H\"{o}lder \cite{holder}, the
extension problem is a famous and still open problem to which a
vast literature was devoted (see \cite{adem} and the references
therein). Fundamental results obtained for groups \cite{adem, EM,
sch} served as a model for studying the extension problem for
several other fields such as Lie/Leibniz algebras \cite{CE, LoP},
super Lie algebras \cite{AMR2}, associative algebras \cite{Ev,
Hoch2}, Hopf algebras \cite{AD}, Poisson algebras \cite{hue},
Lie-Rinehart algebras \cite{CML, hpr} etc. The extension problem
is one of the main tools for classifying 'finite objects' and has
been a source of inspiration for developing cohomology theories in
all fields mentioned above. We recall the extension problem using
the language of category theory. Let $\Cc$ be a category having a
zero (i.e. an initial and final) object $0$ and for which it is
possible to define an exact sequence. Given $A$, $B$ two
\emph{fixed} objects of $\Cc$, the extension problem consists of
the following question:

Describe and classify all \emph{extensions of $A$ by $B$}, i.e.
all triples $(E, \, i, \, \pi)$ consisting of an object $E$ of
$\Cc$ and two morphisms in $\Cc$ that fit into an exact sequence
of the form:
\begin{eqnarray*} \eqlabel{extencat}
\xymatrix{ 0 \ar[r] & B \ar[r]^{i} & E \ar[r]^{\pi} & A \ar[r] & 0
}
\end{eqnarray*}
Two extensions $(E, \, i, \, \pi)$ and $(E', \, i', \, \pi')$ of
$A$ by $B$ are called \emph{equivalent} if there exists an
isomorphism $\varphi: E \to E'$ in $\Cc$ that stabilizes $B$ and
co-stabilizes $A$, i.e. $\varphi \circ i = i'$ and $\pi' \circ
\varphi = \pi$. The answer to the extension problem is given by
explicitly computing the set ${\rm Ext} (A, \, B)$ of all
equivalence classes of extensions of $A$ by $B$ via this
equivalence relation. The simplest case is that of extensions with
an 'abelian' kernel $B$ for which a Schreier type theorem proves
that all extensions of $A$ by an abelian object $B$ are classified
by the second cohomology group ${\rm H}^2 (A, \, B)$ - the result
is valid for groups, Lie/Leibniz/associative/Poisson/Hopf
algebras, but the construction of the second cohomology group is
different for each of the above categories \cite{AD, CE, EM,
Hoch2, sch}. The difficult part of the extension problem is the
case when $B$ is not abelian: as a general principle, the Schreier
type theorems remain valid, but this time the classifying object
of all extensions of $A$ by $B$ is not the cohomology group of a
given complex anymore, but only a pointed set called the
\emph{non-abelian cohomology} ${\rm H}^2_{\rm nab} (A, \, B)$. For
its construction in the case of groups we refer to \cite{AB},
while for Lie algebras to \cite{fr} where it was proved that the
non-abelian cohomology ${\rm H}^2_{\rm nab} (A, \, B)$ is the
Deligne groupoid of a suitable differential graded Lie algebra.
The difficulty of the problem consists in explicitly computing
${\rm H}^2_{\rm nab} (A, \, B)$: lacking an efficient cohomology
tool as in the abelian case \cite{redondo, weibel}, it needs to be
computed 'case by case' using different computational and
combinatorial approaches.

This paper deals, at the level of associative algebras, with a
generalization of the extension problem, called the \emph{global
extension (GE) problem}, introduced recently in \cite{am-2015,
Mi2013} for Poisson/Leibniz algebras as a categorical dual of the
extending structures problem \cite{am-2015b, am-2013, am-2014}.
The GE-problem can be formulated for any category $\Cc$ using a
simple idea: in the classical extension problem we drop the
hypothesis '$B$ is a fixed object in $\Cc$' and replace it by a
weaker one, namely '$B$ has a fixed dimension'. For example, if
$\Cc$ is the category of unital associative algebras over a field
$k$, the GE-problem can be formulated as follows: for a given
algebra $A$, classify all associative algebras $E$ for which there
exists a surjective algebra map $E \to A \to 0$ whose kernel has a
given dimension $\mathfrak{c}$ as a vector space. Of course, any
such algebra has $A \times V$ as the underlying vector space,
where $V$ is a vector space such that ${\rm dim} (V) =
\mathfrak{c}$. Among several equivalent possibilities for writing
down the GE-problem for algebras, we prefer the following:

\emph{Let $A$ be a unital associative algebra, $E$ a vector space
and $\pi : E \to A$ a linear epimorphism of vector spaces.
Describe and classify the set of all unital associative algebra
structures that can be defined on $E$ such that $\pi : E \to A$
becomes a morphism of algebras.}

By classification of two algebra structures $\cdot_E$ and
$\cdot_E'$ on $E$ we mean the classification up to an isomorphism
of algebras $(E, \, \cdot_E) \cong (E, \, \cdot_E')$ that
stabilizes $V := {\rm Ker} (\pi)$ and co-stabilizes $A$: we shall
denote by ${\rm Gext} (A, \, E)$ the set of equivalence classes of
all algebra structures on $E$ such that $\pi : E \to A$ is an
algebra map. Let us explain now the significant differences
between the GE-problem and the classical extension problem for
associative algebras whose study was initiated in \cite{Ev,
Hoch2}. Let $(E, \, \cdot_E)$ be a unital algebra structure on $E$
such that $\pi: (E, \, \cdot_E) \to A$ is an algebra map. Then
$(E, \, \cdot_E)$ is an extension of the unital algebra $A$ by the
associative algebra $V = {\rm Ker} (\pi)$, which is a non-unital
subalgebra (in fact a two-sided ideal) of $(E, \, \cdot_E)$.
However, the multiplication on $V$ is not fixed from the input
data, as in the case of the classical extension problem: it
depends essentially on the algebra structures on $E$ which we are
looking for. Thus the classical extension problem is in some sense
the \emph{'local'} version of the GE-problem. The partial answer
and the best result obtained so far for the classical extension
problem was given in \cite[Theorem 6.2]{Hoch2}: all algebra
structures $\cdot_E$ on $E$ such that $V$ is a two-sided ideal of
null square (i.e. $x \cdot_E y = 0$, for all $x$, $y\in V$ - that
is $V$ is an \emph{'abelian'} algebra) are classified by the
second Hochschild cohomological group ${\rm H}^2 (A, \, V)$. The
result, as remarkable as it is, has its limitations: for example,
if $A := {\rm M}_n (k)$ is the algebra of $n\times n$-matrices
then there is no associative algebra $E$ of dimension $1 + n^2$
for which we have a surjective algebra map $A \to {\rm M}_n (k)$
with a null square kernel - however, there is a large family of
such algebras with a projection on ${\rm M}_n (k)$, one of them
being of course the direct product of algebras ${\rm M}_n (k)
\times k$ (more details are given in \exref{matrici}). This
example provides enough motivation for studying the GE-problem and
the fact that it covers the missing part in the classical
Hochschild approach of the extension problem.

The paper is organized as follows. \seref{glalg} gives the
theoretical answer to the GE-problem in three steps. First of all,
in \prref{hocprod} we introduce a new product $A \star V = A
\star_{(\triangleleft, \, \triangleright, \,  \vartheta, \,
\cdot)} V $, associated to an algebra $A$ and a vector space $V$
connected by two 'actions' of $A$ on $V$, a 'cocycle' and an
associative multiplication $\cdot$ on $V$ satisfying several
axioms. We call the algebra $A \star V$ the \emph{Hochschild
product} since, in the particular case when $\cdot$ is the trivial
multiplication on $V$ the above product reduces to the one
introduced by Hochschild in \cite{Hoch2}: in this case the axioms
involved in the construction of $A \star V$ come down to the fact
that $V$ is an $A$-bimodule and $\vartheta : A \times A \to V$ is
an usual normalized $2$-cocycle. On the other hand, if the cocycle
$\vartheta$ is the trivial map then the associated Hochschild
product $A \star V$ is just the \emph{semidirect product} $A \# \,
V$ of algebras and the corresponding axioms show that $(V, \,
\cdot)$ is an associative algebra in the monoidal category $(
{}_A\Mm_A, \, -\ot_A - , \, A )$ of $A$-bimodules. The canonical
surjection $A \star V \to A$ is an algebra map having the kernel
$V$. \prref{hocechiv} proves the converse: any algebra structure
$\cdot_E$ which can be defined on the vector space $E$ such that
$\pi : (E, \cdot_E) \to A$ is a morphism of algebras is isomorphic
to a Hochschild product $A \star V$. Based on these results and a
technical lemma, the theoretical answer to the GE-problem is given
in \thref{main1222}: the classifying set ${\rm Gext} (A, \, E)$ is
parameterized by an explicitly constructed global cohomological
object ${\mathbb G} {\mathbb H}^{2} \, (A, \, V)$ and the
bijection between the elements of ${\mathbb G} {\mathbb H}^{2} \,
(A, \, V)$ and ${\rm Gext} (A, \, E)$ is given. On the route,
\coref{structura} proves that any finite dimensional algebra is
isomorphic to an iteration of Hochschild products of the form
$\bigl( \cdots \bigl( (S \star V_1)\star V_2 \bigl) \star \cdots
\star V_t\bigl)$, where $S$ is a finite dimensional simple algebra
and $V_1, \cdots, V_t$ are finite dimensional vector spaces.
\coref{desccompcon} provides a decomposition of ${\mathbb G}
{\mathbb H}^{2} \, (A, \, V)$ as the coproduct of all non-abelian
cohomologies ${\mathbb H}^{2} \, \bigl(A, \, (V, \, \cdot_{V}
)\bigl)$, which are classifying objects for the extensions of $A$
by all associative algebra structures $\cdot$ on $V$ -- the second
Hochschild cohomological group ${\rm H}^{2} \, (A, \, V)$ is the
most elementary piece among all components of ${\mathbb G}
{\mathbb H}^{2} \, (A, \, V)$. Computing the classifying object
${\mathbb G} {\mathbb H}^{2} \, (A, \, V)$ is a highly nontrivial
task: if $A$ is finite dimensional and $V := k^n$ this object
parameterizes the equivalence classes of all unital associative
algebras of dimension $n + {\rm dim} (A)$ that admit an algebra
surjection on $A$. In \seref{coflg} we shall identify a way of
computing this object for a class of algebras called
\emph{co-flag} algebras over $A$, i.e. algebras $E$ that have a
finite chain of surjective morphisms of algebras $A_n : = E
\stackrel{\pi_{n}}{\longrightarrow} A_{n-1} \, \cdots \,
\stackrel{\pi_{2}} {\longrightarrow} A_1 \stackrel{\pi_{1}}
{\longrightarrow} A_{0} := A$, such that ${\rm dim} ( {\rm Ker}
(\pi_{i}) ) = 1$, for all $i = 1, \cdots, n$. All co-flag algebras
over $A$ can be completely described and classified by a recursive
reasoning whose key step is given in \prref{hoccal1} and
\coref{hoccal1ab} where ${\mathbb G} {\mathbb H}^{2} \, (A, \, k)$
is computed and all Hochschild products $A \star k$ are described
by generators and relations. The less restrictive classification
of these algebras, i.e. only up to an agebra isomorphism, is given
in \thref{clasres} where the second classifying object ${\mathbb
H} {\mathbb O} {\mathbb C} \, (A, \, k)$ is computed: it
parameterizes the isomorphism classes of all Hochschild products
$A \star k$, that is, it classifies up to an isomorphism, all
algebras $B$ which admit a surjective algebra map $B \to A$ with a
$1$-dimensional kernel. As a bonus of our approach, the
automorphism groups ${\rm Aut}_{\rm Alg} (A \star k)$ are fully
determined in \coref{izoaut} as subgroups of a semidirect product
$A^* \, \ltimes \bigl(k^* \times {\rm Aut}_{\rm Alg} (A) \bigl)$
of groups. At this point we should mention that determining the
structure of the automorphism group of a given algebra is an old
problem, intensively studied and very difficult, arising from
invariant theory (see \cite{bavula, cek} and the references
therein). Several examples where both classifying objects
${\mathbb G} {\mathbb H}^{2} \, (A, \, k)$ and ${\mathbb H}
{\mathbb O} {\mathbb C} \, (A, \, k)$ are explicitly computed for
different algebras $A$ are worked out in details. For instance, if
$A = k[C_n]$, is the group algebra of the cyclic group $C_n$ of
order $n$, then ${\mathbb G} {\mathbb H}^{2} \, (k[C_n], \, k)
\cong \bigl(U_{n}(k) \times U_{n}(k)\bigl) \, \sqcup \, k^* $,
where $U_{n}(k)$ is the group of $n$-th roots of unity in $k$. The
classification of these algebras, by describing the classifying
object  ${\mathbb H} {\mathbb O} {\mathbb C} \, (k[C_n], \, k)$,
is also indicated and reduces the question to a challenging number
theory problem which depends heavily on the arithmetics of $n$ and
the base field $k$ -- it is also related to two intensively
studied problems in the theory of group rings, namely the
description of all invertible elements and the automorphism group
of a group algebra \cite{jan, mili, oli}. An intriguing example is
$A := \mathcal{T}_{n}(k)$, the algebra of upper triangular
$(n\times n)$-matrices. The global cohomological object ${\mathbb
G} {\mathbb H}^{2} \, (\mathcal{T}_{n}(k), \, k)$ is computed in
\exref{Anamatrici} being described by a very interesting set of
matrices of trace $0$. In particular, $|{\mathbb H} {\mathbb O}
{\mathbb C} \, (\mathcal{T}_{2}(k), \, k)| = 8$, i.e. up to an
isomorphism of algebras there exist exactly eight $4$-dimensional
algebras that have an algebra projection on the Heisenberg algebra
$\mathcal{T}_{2}(k)$. Applications for coalgebras and Poisson
algebras are given in \seref{aplicatii} based on the same idea:
namely, that of rephrasing the concepts and results of this paper
for coalgebras (resp. Poisson manifolds) via two different
contravariant functors $(-)^* := {\rm Hom}_k \, (-, \, k)$ (resp.
$C^{\infty} (-)$) from the category of coalgebras (resp. Poisson
manifolds) to the category of algebras (resp. Poisson algebras).
Having the theory of supersolvable Lie algebras \cite{barnes} as a
source of inspiration  we introduce the concept of a
\emph{supersolvable coalgebra} in such a way that a coalgebra $C$
is supersolvable if and only if the convolution algebra $C^*$ is a
co-flag algebra. In particular, \coref{clasfinal} classifies all
$3$-dimensional supersolvable coalgebras over a field of
characteristic $\neq 2$. On the other hand, for a given Poisson
algebra $P$, \thref{pcodim1} classifies up to an isomorphism all
Poisson algebras $Q$ which admit a Poisson surjection $Q \to P \to
0$ with a $1$-dimensional kernel. The result is the algebraic
counterpart of the classification problem of all Poisson manifolds
containing a given Poisson manifold $M$ of codimension $1$. As an
example, we show that there exist exactly six families of
$4$-dimensional Poisson algebras with a Poisson algebra surjection
of the Heisenberg-Poisson algebra ${\mathcal H} (3, \, k)$. For
applications and further motivation for studying Poisson algebras
we refer to \cite{gra1993, gra2013, LPV}.

\section{The global extension problem} \selabel{glalg}
\subsection*{Notations and terminology}
For two sets $X$ and $Y$ we shall denote by $X \sqcup Y$ their
coproduct in the category of sets, i.e. $X \sqcup Y$ is the
disjoint union of $X$ and $Y$. Unless otherwise specified, all
vector spaces, linear or bilinear maps are over an arbitrary field
$k$. A map $f: V \to W$ between two vector spaces is called the
trivial map if $f (v) = 0$, for all $v\in V$. By an algebra $A$ we
always mean a unital associative algebra over $k$ whose unit will
be denoted by $1_A$. All algebra maps preserve units and any
left/right $A$-module is unital. For an algebra $A$, ${\rm
Aut}_{\rm Alg} (A)$ denotes the group of algebra automorphisms of
$A$, ${\rm Alg} \,(A, \, k)$ is the space of all algebra maps $A
\to k$ while ${}_A\Mm_A$ stands for the category of $A$-bimodules.
If $(V, \, \triangleright, \, \triangleleft) \in {}_A\Mm_A$ is an
$A$-bimodule, then the \emph{trivial extension} of $A$ by $V$ is
the algebra $A \times V$, with the multiplication defined for any
$a$, $b\in A$, $x$, $y\in V$ by:
\begin{equation} \eqlabel{trivialext}
(a, \, x) \cdot (b, \, y) := \bigl(ab, \,\, a \triangleright y  +
x\triangleleft b \bigl)
\end{equation}
Let $A$ be an algebra, $E$ a vector space, $\pi : E \to A$ a
linear epimorphism of vector spaces with $V: = {\rm Ker} (\pi)$
and denote by $i: V \to E$ the inclusion map. We say that a linear
map $\varphi: E \to E$ \emph{stabilizes} $V$ (resp.
\emph{co-stabilizes} $A$) if the left square (resp. the right
square) of the following diagram
\begin{eqnarray} \eqlabel{diagrama1}
\xymatrix {& V \ar[r]^{i} \ar[d]_{Id} & {E}
\ar[r]^{\pi} \ar[d]^{\varphi} & A \ar[d]^{Id}\\
& V \ar[r]^{i} & {E}\ar[r]^{\pi } & A}
\end{eqnarray}
is commutative. Two unital associative algebra structures $\cdot $
and $\cdot'$ on $E$ such that $\pi : E \to A$ is a morphism of
algebras are called \emph{cohomologous} and we denote this by $(E,
\cdot) \approx (E, \cdot')$, if there exists an algebra map
$\varphi: (E, \cdot) \to (E, \cdot')$ which stabilizes $V$ and
co-stabilizes $A$. We can easily prove that any such morphism is
bijective and thus, $\approx$ is an equivalence relation on the
set of all algebra structures on $E$ such that $\pi : E \to A$ is
an algebra map and we denote by ${\rm Gext} \, (A, \, E)$ the set
of all equivalence classes via the equivalence relation $\approx$.
${\rm Gext} \, (A, \, E)$ is the classifying object for the
GE-problem. In what follows we will prove that ${\rm Gext} \, (A,
\, E)$ is parameterized by a global cohomological object ${\mathbb
G} {\mathbb H}^{2} \, (A, \, V)$ which will be explicitly
constructed. To start with, we introduce the following:

\begin{definition} \delabel{hocdat}
Let $A$ be an algebra and $V$ a vector space. A \emph{Hochschild
data} of $A$ by $V$ is a system $\Theta(A, V) = (\triangleright,
\, \triangleleft, \, \vartheta, \, \cdot)$ consisting of four
bilinear maps
$$
\triangleright \, \, : A \times V \to V, \quad \triangleleft : V
\times A \to V, \quad \vartheta \,\, : A \times A \to V, \quad
\cdot \, : V \times V \to V
$$
\end{definition}

For a Hochschild data $ \Theta(A, V) = (\triangleright, \,
\triangleleft, \, \vartheta, \, \cdot)$ we denote by $A \star V =
A \star_{(\triangleleft, \, \triangleright, \, \vartheta, \,
\cdot)} V $ the vector space $A \times V$ with the multiplication
defined for any $a$, $b\in A$ and $x$, $y \in V$ by:
\begin{equation} \eqlabel{hoproduct2}
(a, x) \star (b, y) := (ab, \, \vartheta (a, b) + a \triangleright
y + x \triangleleft b + x \cdot y)
\end{equation}
$A \star V$ is called the \emph{Hochschild product} associated to
$\Theta(A, V)$ if it is an associative algebra with the
multiplication given by \equref{hoproduct2} and the unit $(1_A, \,
0_V)$. In this case $ \Theta(A, V) = (\triangleright, \,
\triangleleft, \, \vartheta, \, \cdot)$ is called a
\emph{Hochschild system} of $A$ by $V$. The multiplication defined
by \equref{hoproduct2} is more general than the one appearing in
the proof of \cite[Theorem 6.2]{Hoch2} -- the latter arises as a
special case of $A \star V$ for $\cdot : V \times V \to V$ the
trivial map, that is $x\cdot y = 0$, for all $x$, $y \in V$.
Moreover, the GE-problem is the dual, in the sense of category
theory, of the extending structures problem studied for algebras
in \cite{am-2014}: hence, from this viewpoint the Hochschild
product $A \star V$ can be seen as a categorical dual of the
\emph{unified product} $A \ltimes V$ introduced in \cite[Theorem
2.2]{am-2014}. The necessary and sufficient conditions for $A
\star V$ to be a Hochschild product are given in the following:

\begin{proposition}\prlabel{hocprod}
Let $A$ be an algebra, $V$ a vector space and $\Theta(A, V) =
(\triangleright, \, \triangleleft, \, \vartheta, \, \cdot)$ a
Hochschild data of $A$ by $V$. Then $A \star V$ is a Hochschild
product if and only if the following compatibility conditions hold
for any $a$, $b$, $c \in A$ and $x$, $y\in V$:
\begin{enumerate}
\item[(H0)] $\vartheta (a, \, 1_{A}) = \vartheta (1_{A}, \, a) =
0$, \, $x \triangleleft 1_{A} = x$, \, $1_{A} \triangleright x =
x$

\item[(H1)] $(x \cdot y) \lhd a = x \cdot (y \lhd a)$

\item[(H2)] $(x \lhd a) \cdot y = x \cdot (a \triangleright y)$

\item[(H3)] $a \triangleright (x \cdot y) = (a \triangleright x)
\cdot y$

\item[(H4)] $(a \triangleright x) \lhd b = a \triangleright (x
\lhd b)$

\item[(H5)] $\vartheta(a, \, bc) - \vartheta(ab, \, c) =
\vartheta(a, \, b) \lhd c - a \triangleright \vartheta(b, \,c)$

\item[(H6)] $(ab) \triangleright x = a \triangleright (b
\triangleright x) - \vartheta(a, \, b) \cdot x$

\item[(H7)] $x \lhd (ab) = (x \lhd a) \lhd b - x \cdot
\vartheta(a, \, b)$

\item[(H8)] The bilinear map $\cdot : V \times V \to V$ is
associative.
\end{enumerate}
\end{proposition}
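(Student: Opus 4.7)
The plan is a direct verification that splits naturally into the unit axiom and the associativity axiom for the multiplication $\star$. Since $\star$ is bilinear, it suffices to verify both conditions on arbitrary elements $(a,x),(b,y),(c,z)\in A\times V$; the individual compatibilities can then be extracted by specialization.

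First I would dispose of the unit axiom. Writing out $(1_{A},0_{V})\star (a,x)$ and $(a,x)\star (1_{A},0_{V})$ from \equref{hoproduct2} and demanding that each coincide with $(a,x)$, one obtains (by setting in turn $x=0$ and $a=1_{A}$) exactly the four scalar/module identities packaged in (H0); conversely these identities make the unit property self-evident.

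Second, for associativity I would compute both $\bigl((a,x)\star (b,y)\bigr)\star (c,z)$ and $(a,x)\star \bigl((b,y)\star (c,z)\bigr)$ by applying \equref{hoproduct2} twice on each side. The $A$-components agree automatically since $A$ is associative, so the entire content sits in the $V$-components, each of which becomes a sum of ten terms. The key organizational device is that every such term carries a well-defined \emph{type}, recording for each of the three slots whether the input lives in $A$ (including the pair of inputs giving rise to a $\vartheta$ contribution) or in $V$. Matching the two sides type by type, the pure $(A,A,A)$ type yields exactly (H5); the mixed types $(A,A,V)$, $(V,A,A)$, $(A,V,A)$ yield (H6), (H7), (H4) respectively; the types $(A,V,V)$, $(V,A,V)$, $(V,V,A)$ yield (H3), (H2), (H1); and the pure $(V,V,V)$ type yields (H8).

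Necessity of each axiom is then handled by specialization: to extract, say, (H6), set $x=y=0$ and let $a,b,c,z$ vary so that only the terms of type $(A,A,V)$ survive on both sides. For (H5) one sets $x=y=z=0$, and so on. Sufficiency follows because once (H1)--(H8) are imposed every type-summand on the left matches the corresponding one on the right, so the two $V$-components agree in full. I do not foresee a genuine obstacle here: the argument is essentially a bookkeeping exercise, and the only mild subtlety lies in ensuring that every one of the twenty expansion terms is assigned to its correct type and that no cross-contamination occurs, so that the specialization arguments do isolate one condition at a time.
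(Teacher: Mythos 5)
Your proposal is correct and follows essentially the same route as the paper: the paper likewise reduces the unit axiom to (H0) and checks associativity only on the generators $(a,0)$ and $(0,x)$, which is exactly your type-by-type decomposition of the trilinear associator. The only cosmetic difference is that you expand the full product of three general elements and then specialize, whereas the paper specializes from the outset.
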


Before going into the proof, we make some comments on the
relations (H0)-(H8). The first relation in (H0) together with (H5)
show that $\vartheta$ is a normalized Hochschild $2$-cocycle. (H6)
and (H7) are deformations of the usual left and respectively right
$A$-module conditions: together with (H4) and the last two
relations of (H0) they measure how far $(V, \, \triangleright, \,
\triangleleft )$ is from being an $A$-bimodule. Finally, axioms
(H1)-(H3) are compatibilities between the associative
multiplication $\cdot$ on $V$ and the 'actions' $(\triangleright,
\, \triangleleft )$ of $A$ on $V$ which are missing in the
classical theory \cite{Hoch2} since $\cdot$ is the trivial map and
thus they are automatically fulfilled. When $\vartheta$ is the
trivial map, axioms (H1)-(H3) together with (H6)-(H8) imply that
$(V, \cdot)$ is an associative algebra in the monoidal category
${}_A\Mm_A = ({}_A\Mm_A, \, -\ot_A - , \, A )$ of $A$-bimodules
(see \exref{abelian} below).

\begin{proof}
To start with, we can easily prove that $(1_A, \, 0_V)$ is the
unit for the multiplication defined by \equref{hoproduct2} if and
only if (H0) holds. The rest of the proof relies on a detailed
analysis of the associativity condition for the multiplication
given by \equref{hoproduct2}. Since in $A \star V$ we have $(a, x)
= (a, 0) + (0, x)$, it follows that the associativity condition
holds if and only if it holds for all generators of $A \star V$,
i.e. for the set $\{(a, \, 0) ~|~ a \in A\} \cup \{(0, \, x) ~|~ x
\in V\}$. To save space we will illustrate only a few cases, the
rest of the details being left to the reader. For instance, the
associativity condition for the multiplication given by
\equref{hoproduct2} holds in \{(0, x), \, (0, y), \, (a, 0)\} if
and only if (H1) holds. Similarly, the associativity condition
holds in \{(0, x), \,(a, 0), \, (0, y)\} if and only if (H2) holds
while, the associativity condition holds in \{(0, x), \, (0, y),
\, (0, z)\} if and only if $\cdot : V \times V \to V$ is
associative.
\end{proof}

From now on a Hochschild system of $A$ by $V$ will be viewed as a
system of bilinear maps $\Theta(A, V) = (\triangleright, \,
\triangleleft, \, \vartheta, \, \cdot)$ satisfying the axioms
(H0)-(H8) and we denote by ${\mathcal H}{\mathcal S} \, (A, \, V)$
the set consisting of all Hochschild systems of $A$ by $V$.

\begin{examples} \exlabel{abelian}
1. By applying \prref{hocprod} we obtain that a Hochschild data
$(\triangleright, \, \triangleleft, \, \vartheta, \, \cdot)$ for
which $\cdot$ is the trivial map is a Hochschild system if and
only if $(V, \triangleright, \triangleleft)$ is an $A$-bimodule
and $\vartheta : A \times A \to V$ is a normalized $2$-cocycle.
Furthermore, if $\vartheta$ is also the trivial map, then the
associated Hochschild product is just the trivial extension of the
algebra $A$ by the $A$-bimodule $V$ as defined by
\equref{trivialext}.

2. A Hochschild system $\Theta(A, V) = (\triangleright, \,
\triangleleft, \, \vartheta, \, \cdot)$ for which $\vartheta$ is
the trivial map is called a \emph{semidirect system} of $A$ by
$V$. In this case $\vartheta$ will be omitted when writing down
$\Theta(A, V)$ and axioms in \prref{hocprod} take a simplified
form: $\Theta(A, V) = (\triangleright, \, \triangleleft, \,
\cdot)$ is a semidirect system if and only if $(V, \,
\triangleright, \, \triangleleft) \in {}_A\Mm_A$ is an
$A$-bimodule, $(V, \cdot)$ is an associative algebra and
\begin{eqnarray}
(x \cdot y) \lhd a = x \cdot (y \lhd a), \qquad a \triangleright
(x \cdot y) = (a \triangleright x) \cdot y, \qquad (x \lhd a)
\cdot y = x \cdot (a \triangleright y) \eqlabel{monoidal}
\end{eqnarray}
for all $a\in A$, $x$, $y\in V$. The Hochschild product associated
to a semidirect system $\Theta(A, V) = (\triangleright, \,
\triangleleft, \, \cdot)$ is called a \emph{semidirect product} of
algebras and will be denoted by $A \# V := A \#_{(\triangleleft,
\, \triangleright, \, \cdot)} V$. The terminology will be
motivated below in \coref{splialg}: exactly as in the case of
groups or Lie algebras, the semidirect product of algebras
describes split epimorphisms in the category of algebras. We will
rephrase the axioms of a semidirect system $\Theta(A, V) =
(\triangleright, \, \triangleleft, \, \cdot)$ of $A$ by $V$ using
the language of monoidal categories. The first and the second
axioms of \equref{monoidal} are equivalent to the fact that the
bilinear map $\cdot : V \times V \to V$ is an $A$-bimodule map,
while the last one is the same as saying that the map is
$A$-balanced. The space of these maps is in one-to-one
correspondence with the set of all $A$-bimodule maps $V\ot_A V \to
V$. This fact together with the other two axioms can be rephrased
as follows: $\Theta(A, V) = (\triangleright, \, \triangleleft, \,
\cdot)$ is a semidirect system of $A$ by $V$ if and only if $(V,
\cdot)$ is a (not-necessarily unital) associative algebra in the
monoidal category ${}_A\Mm_A = ({}_A\Mm_A, \, -\ot_A - , \, A )$
of $A$-bimodules.
\end{examples}

The Hochschild product is the tool to answer the GE-problem.
Indeed, first we observe that the canonical projection $\pi_A : A
\star V \to A$, $\pi_A (a, x) := a$ is a surjective algebra map
with kernel $\{0\} \times V \cong V$. Hence, the algebra $A \star
V$ is an extension of the algebra $A$ by the associative algebra
$(V, \cdot)$ via
\begin{eqnarray} \eqlabel{extenho1}
\xymatrix{ 0 \ar[r] & V \ar[r]^{i_{V}} & A \star \, V
\ar[r]^{\pi_{A}} & A \ar[r] & 0 }
\end{eqnarray}
where $i_V (x) = (0, x)$. Conversely, we have:

\begin{proposition}\prlabel{hocechiv}
Let $A$ be an algebra, $E$ a vector space and $\pi : E \to A$ an
epimorphism of vector spaces with $V = {\rm Ker} (\pi)$. Then any
algebra structure $\cdot$ which can be defined on the vector space
$E$ such that $\pi : (E, \cdot) \to A$ becomes a morphism of
algebras is isomorphic to a Hochschild product $A \star V$ and
moreover, the isomorphism of algebras $ (E, \cdot) \cong A \star
V$ can be chosen such that it stabilizes $V$ and co-stabilizes
$A$.

Thus, any unital associative algebra structure on $E$ such that
$\pi : E \to A$ is an algebra map is cohomologous to an extension
of the form \equref{extenho1}.
\end{proposition}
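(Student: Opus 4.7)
The plan is to reconstruct the four structure maps of a Hochschild system directly from a linear splitting of $\pi$. Since $\pi$ is surjective with $\pi(1_E)=1_A$, one can choose a $k$-linear section $s\colon A\to E$ such that $\pi\circ s=\Id_A$ and, crucially, $s(1_A)=1_E$. The unique decomposition $e=s(\pi(e))+(e-s(\pi(e)))$, with the second summand in $V=\Ker(\pi)$, yields a $k$-linear isomorphism
\begin{equation*}
\varphi\colon A\times V\to E,\qquad \varphi(a,v):=s(a)+v,
\end{equation*}
with inverse $e\mapsto(\pi(e),\,e-s(\pi(e)))$. From the very definition, $\varphi(0,v)=v=i(v)$ and $\pi(\varphi(a,v))=\pi(s(a))=a=\pi_A(a,v)$, so $\varphi$ automatically stabilizes $V$ and co-stabilizes $A$.

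The next step is to define, using the multiplication $\cdot$ on $E$, the four bilinear maps that will constitute the Hochschild data $\Theta(A,V)$:
\begin{align*}
a\triangleright v&:=s(a)\cdot v, & v\triangleleft a&:=v\cdot s(a),\\
\vartheta(a,b)&:=s(a)\cdot s(b)-s(ab), & v\cdot_V w&:=v\cdot w.
\end{align*}
All four maps take values in $V$: because $\pi$ is an algebra map, $V$ is a two-sided ideal of $(E,\cdot)$ (handling $\triangleright$, $\triangleleft$ and $\cdot_V$), while $\pi(s(a)s(b)-s(ab))=ab-ab=0$ takes care of $\vartheta$.

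The main computation is a one-line bilinear expansion. For all $a,b\in A$ and $v,w\in V$,
\begin{equation*}
\varphi(a,v)\cdot\varphi(b,w)=(s(a)+v)(s(b)+w)=s(ab)+\vartheta(a,b)+a\triangleright w+v\triangleleft b+v\cdot_V w,
\end{equation*}
which is exactly $\varphi$ applied to the right-hand side of \equref{hoproduct2}. Hence $\varphi$ transports the algebra structure on $E$ to the bilinear map on $A\times V$ given by \equref{hoproduct2}; this transported structure is therefore unital associative with unit $\varphi^{-1}(1_E)=(1_A,0_V)$. By \prref{hocprod}, $\Theta(A,V)$ is automatically a Hochschild system, so the Hochschild product $A\star V$ exists and $\varphi\colon A\star V\to(E,\cdot)$ is the desired algebra isomorphism. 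The last assertion is then just a reformulation: $(E,\cdot)\approx A\star V$ and the extension \equref{extenho1} attached to $A\star V$ is the claimed one.

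No real obstacle arises. The only subtlety is the normalization $s(1_A)=1_E$, which is needed to obtain the unit identities in axiom (H0); it is available thanks to $\pi(1_E)=1_A$. All other axioms (H1)--(H8) are not checked by hand at all — they are handed to us for free by the transport argument, since associativity of $\star$ is equivalent, via \prref{hocprod}, to the associativity of the original multiplication $\cdot$ on $E$.
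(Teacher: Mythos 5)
Your proposal is correct and follows essentially the same route as the paper's proof: choose a unit-preserving linear section $s$ of $\pi$, transport the multiplication of $E$ through the isomorphism $\varphi(a,v)=s(a)+v$, and read off the four structure maps from the bilinear expansion of $(s(a)+v)\cdot(s(b)+w)$. The only (harmless) difference is presentational — you make explicit that $V$ is a two-sided ideal so the maps land in $V$, and that axioms (H0)--(H8) follow for free from \prref{hocprod}, points the paper leaves as "easily seen".
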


\begin{proof}
Let $\cdot$ be an algebra structure of $E$ such that $\pi: (E,
\cdot) \to A$ is an algebra map. Since $k$ is a field we can pick
a $k$-linear section $s : A \to E$ of $\pi$, i.e. $\pi \circ s =
{\rm Id}_{A}$ and $s (1_A) = 1_E$. Then $\varphi : A \times V \to
E$, $\varphi (a, x) := s(a) + x$ is an isomorphism of vector
spaces with the inverse $\varphi^{-1} (y) = \bigl(\pi(y), \, y - s
(\pi(y)) \bigl)$, for all $y\in E$. Using the section $s$ we
define three bilinear maps given for any $a$, $b\in A$ and $x\in
V$ by:
\begin{eqnarray*}
\triangleleft &=& \triangleleft_{s} \,\,\, : V \times A \to V,
\,\,\,\,\,\,\, x
\triangleleft a := x \cdot s(a) \eqlabel{act1}\\
\triangleright &=& \triangleright_{s} \, \, : A \times V \to V,
\,\,\,\,
a \triangleright x := s(a) \cdot x \eqlabel{act2}\\
\vartheta &=& \vartheta_s \,\,\, : A \times A \to V, \,\,\,\,
\vartheta (a, b) := s(a) \cdot s(b) - s(ab)  \eqlabel{coc1}
\end{eqnarray*}
and let $\cdot_V : V \times V \to V$ be the restriction of $\cdot$
at $V$, i.e. $x \cdot_V \, y := x \cdot y$, for all $x$, $y\in V$.
We can easily see that they are well-defined maps. The key step is
the following: using the system $(\triangleleft, \,
\triangleright, \, \vartheta, \, \cdot_V = \cdot)$ connecting $A$
and $V$ we can prove that the unique algebra structure $\star $
that can be defined on the direct product of vector spaces $A
\times V$ such that $\varphi : A \times V \to (E, \cdot)$ is an
isomorphism of algebras is given by:
\begin{equation} \eqlabel{hoproduct}
(a, x) \star (b, y) := (ab, \, \vartheta (a, b) + a \triangleright
y + x \triangleleft b + x \cdot y)
\end{equation}
for all $a$, $b\in A$, $x$, $y \in V$. Indeed, let $\star $ be
such an algebra structure on $A\times V$. Then:
\begin{eqnarray*}
(a, x) \star (b, y) &=& \varphi^{-1} \bigl(\varphi(a, x) \cdot
\varphi(b, y)\bigl) = \varphi^{-1} \bigl(  ( s(a) + x) \cdot (s(b) + y) \bigl) \\
&=& \varphi^{-1} ( s(a) \cdot s(b) + s(a) \cdot y + x \cdot s(b) + x \cdot y ) \\
&=& \bigl ( ab, \,  s(a) \cdot s(b) - s (ab) + s(a) \cdot y + x \cdot s(b) + x\cdot y \bigl) \\
&=& \bigl( ab, \, \vartheta (a, b) + a \triangleright y + x
\triangleleft b + x \cdot y )
\end{eqnarray*}
as needed. Thus, $\varphi : A \star V \to (E, \cdot)$ is an
isomorphism of algebras and we can see that it stabilizes $V$ and
co-stabilizes $A$.
\end{proof}

Using \prref{hocechiv} we obtain the following result concerning
the structure of finite dimensional algebras which indicates the
crucial role played by Hochschild products. We can survey all
algebras of a given dimension if we are able to compute various
Hochschild systems starting with a simple algebra (whose structure
is well-known due to the Wederburn-Artin theorem) and the
associated Hochschild products. It is the associative algebra
counterpart of a similar result from group theory \cite[pages
283-284]{rotman}.

\begin{corollary}\colabel{structura}
Any finite dimensional algebra is isomorphic to an iteration of
Hochschild products of the form $\Bigl( \cdots \bigl( (S \star
V_1)\star V_2 \bigl) \star \cdots \star V_t\Bigl)$, where $S$ is a
finite dimensional simple $k$-algebra, $t$ is a positive integer
and $V_1, \cdots, V_t$ are finite dimensional vector spaces.
\end{corollary}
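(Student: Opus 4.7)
The plan is to proceed by induction on $n := \dim_k E$, with \prref{hocechiv} as the sole structural input: each quotient of $E$ by a two-sided ideal is encoded, up to algebra isomorphism, as a Hochschild product.

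For the base case I would treat $n = 1$ (where $E \cong k$ is simple) together with the general case of an arbitrary simple $E$: simply set $S := E$ and there is nothing to iterate (equivalently, one may take $V_1 := 0$ since $S \star 0 \cong S$). For the inductive step, assume the conclusion for all algebras of dimension $< n$ and let $E$ have dimension $n$. If $E$ is simple, we fall into the base case. Otherwise, $E$ admits a proper non-zero two-sided ideal $V$, and the quotient $A := E/V$ is a unital associative algebra with $\dim_k A < n$. The canonical projection $\pi : E \to A$ is a surjective algebra map with $\ker(\pi) = V$, so \prref{hocechiv} furnishes a Hochschild system $\Theta(A, V) = (\triangleright, \triangleleft, \vartheta, \cdot)$ and an algebra isomorphism $E \cong A \star V$. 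Applying the inductive hypothesis to $A$ yields a finite dimensional simple $k$-algebra $S$ and finite dimensional vector spaces $V_1, \ldots, V_{t-1}$ (with their implicit Hochschild data) such that
\[
A \cong \bigl( \cdots (S \star V_1) \star \cdots \star V_{t-1} \bigr);
\]
setting $V_t := V$ gives the desired iterated decomposition of $E$.

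I do not anticipate a serious obstacle: the structural content is carried entirely by \prref{hocechiv}, and what remains is the elementary fact that every non-simple finite dimensional algebra admits a proper non-zero two-sided ideal (for instance, a minimal such ideal, or the Jacobson radical when nontrivial). The only subtlety worth flagging is that the vector spaces $V_i$ in the statement are implicitly equipped with the non-unital associative multiplications produced by the successive Hochschild systems; this is consistent with the phrasing of the corollary, which records only the underlying vector space structure of the $V_i$ while the full Hochschild data is absorbed into the iterated $\star$-products.
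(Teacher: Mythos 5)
Your proposal is correct and follows essentially the same route as the paper: induction on the dimension, passing to the quotient $E/V$ by a proper non-zero two-sided ideal and invoking \prref{hocechiv} to realize $E$ as a Hochschild product $(E/V)\star V$. The paper's proof is identical in substance (with the base case written as $k \cong k\star\{0\}$, matching your $V_1 := 0$ remark), so there is nothing to add.
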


\begin{proof}
Let $A$ be an algebra of dimension $n$. The proof goes by
induction on $n$. If $n = 1$ then $A \cong k \cong k \star \{0\}$
and $k$ is a simple algebra. Assume now that $n > 1$. If $A$ is
simple there is nothing to prove. On the contrary, if $A$ has a
proper two-sided ideal $\{0\} \neq V_t \neq A$, let $\pi : A \to
A_1 := A/V_t$ be the canonical projection. It follows from
\prref{hocechiv} that $A \cong A_1 \star V_t$, for some Hochschild
system of $A_1$ by $V_t$. If $A_1$ is simple the proof is
finished; if $A_1$ is not simple, we apply induction since ${\rm
dim}_k (A_1) < n$.
\end{proof}

The semidirect products of algebras characterize split epimorphism
in this category:

\begin{corollary} \colabel{splialg}
An algebra map $\pi: B \to A$ is a split epimorphism in the
category of algebras if and only if there exists an isomorphism of
algebras $B \cong A \# V$, where $V = {\rm Ker} (\pi)$ and $A \#
V$ is a semidirect product of algebras.
\end{corollary}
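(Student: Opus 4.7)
The plan is to apply \prref{hocechiv} with a section that is actually an algebra map, and observe that in this refined setting the cocycle $\vartheta$ vanishes automatically.

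For the easy direction ($\Leftarrow$), I would start from an isomorphism of algebras $B \cong A \# V = A \#_{(\triangleleft, \, \triangleright, \, \cdot)} V$ and exhibit an explicit section. The canonical projection $\pi_A : A \# V \to A$, $\pi_A(a, x) = a$, is split by $s_A : A \to A \# V$, $s_A(a) := (a, 0_V)$. Since $\vartheta$ is trivial in a semidirect system, formula \equref{hoproduct2} gives $(a, 0_V) \star (b, 0_V) = (ab, \, 0_V)$, so $s_A$ is multiplicative; it also preserves units by axiom (H0). Composing with the given isomorphism $B \cong A \# V$ yields an algebra map section of the corresponding projection $B \to A$.

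For the nontrivial direction ($\Rightarrow$), assume $\pi : B \to A$ is a split epimorphism of algebras, and let $s : A \to B$ be an algebra map with $\pi \circ s = \Id_A$ (in particular $s(1_A) = 1_B$). I would then run the construction in the proof of \prref{hocechiv} using precisely this section $s$: it produces bilinear maps $\triangleright_s$, $\triangleleft_s$, $\vartheta_s$ and $\cdot_V$ for which $\varphi : A \star_{(\triangleleft_s, \triangleright_s, \vartheta_s, \cdot_V)} V \to B$, $\varphi(a, x) = s(a) + x$, is an isomorphism of algebras. The crucial observation is that because $s$ is an algebra morphism,
\[
\vartheta_s(a, b) = s(a) \cdot s(b) - s(ab) = s(ab) - s(ab) = 0,
\]
for all $a, b \in A$. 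Hence the Hochschild system arising from $s$ is in fact a semidirect system, and the associated Hochschild product is the semidirect product $A \#_{(\triangleleft_s, \triangleright_s, \cdot_V)} V$, with $V = \Ker(\pi)$, as required.

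There is no real obstacle here beyond bookkeeping: the substantive work is contained in \prref{hocechiv} and \exref{abelian}(2), and this corollary is the observation that replacing a merely linear section by an algebra section forces $\vartheta$ to vanish. The only minor point to check is that the algebra section does give $s(1_A) = 1_B$ (needed for axiom (H0)), which follows from the assumption that algebra maps preserve units.
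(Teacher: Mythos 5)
Your proposal is correct and follows essentially the same route as the paper: the forward direction applies the construction of \prref{hocechiv} to the algebra-map section $s$ and notes that $\vartheta_s(a,b)=s(a)s(b)-s(ab)=0$, while the converse uses the canonical section $s_A(a)=(a,0)$ of the projection $A\# V\to A$. The extra details you supply (multiplicativity of $s_A$ and unit preservation) are just the bookkeeping the paper leaves implicit.
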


\begin{proof}
First we note that for a semidirect product $A\# V$, the canonical
projection $p_A : A \# V \to A$, $p_{A}(a, \, x) = a$ has a
section that is an algebra map defined by $s_A (a) = (a, 0)$, for
all $a\in A$. Conversely, let $s: A \to B$ be an algebra map such
that $\pi \circ s = {\rm Id}_A$. Then, the bilinear map
$\vartheta_s$ constructed in the proof of \prref{hocechiv} is the
trivial map and hence the corresponding Hochschild product $A
\star V$ is a semidirect product $A \# V$.
\end{proof}

\prref{hocechiv} shows that the classification part of the
GE-problem reduces to the classification of all Hochschild
products associated to all Hochschild systems of $A$ by $V$. This
is what we will do next.

\begin{lemma}\lelabel{HHH}
Let $\Theta(A, V) = (\triangleright, \, \triangleleft, \,
\vartheta, \, \cdot)$ and $\Theta '(A, V) = (\triangleright ', \,
\triangleleft ', \, \vartheta ', \, \cdot ')$ be two Hochschild
systems and $A \star V$, respectively $A \star ' V$, the
corresponding Hochschild products. Then there exists a bijection
between the set of all morphisms of algebras $\psi: A \star V \to
A \star ' V$ which stabilize $V$ and co-stabilize $A$ and the set
of all linear maps $r: A \to V$ with $r(1_{A}) = 0$ satisfying the
following compatibilities for all $a$, $b \in A$, $x$, $y \in V$:
\begin{enumerate}
\item[(CH1)] $x \cdot y = x \cdot ' y$; \item[(CH2)] $x
\triangleleft a = x \triangleleft ' a + x \cdot ' r(a)$;
\item[(CH3)] $a \triangleright x = a \triangleright ' x + r(a)
\cdot ' x$; \item[(CH4)] $\vartheta(a, \,b) + r(ab) = \vartheta
'(a, \, b) + a \triangleright ' r(b) + r(a) \lhd ' b + r(a) \cdot
' r(b)$
\end{enumerate}
Under the above bijection the morphism of algebras $\psi =
\psi_{r}: A \star V \to A \star ' V$ corresponding to $r: A \to V$
is given by $\psi(a, x) = (a, \, r(a) + x)$, for all $a \in A$, $x
\in V$. Moreover, $\psi = \psi_{r}$ is an isomorphism with the
inverse given by $\psi^{-1}_{r} = \psi_{-r}$.
\end{lemma}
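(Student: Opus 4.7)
The plan is to first pin down the shape of any linear map $\psi: A \star V \to A \star' V$ that stabilizes $V$ and co-stabilizes $A$, and then translate the algebra-map conditions (multiplicativity and unit preservation) into identities matching (CH1)--(CH4).

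\textbf{Step 1 (shape of $\psi$).} Co-stabilizing $A$ forces the first coordinate of $\psi(a,x)$ to equal $a$, so $\psi(a,x) = (a,\, f(a,x))$ for some linear $f: A \times V \to V$. Stabilizing $V$ gives $\psi(0,x) = (0,x)$, so $f(0,x) = x$. Writing $f(a,x) = r(a) + h(x)$ for linear $r: A \to V$ and $h: V \to V$ and applying $f(0,x) = x$ forces $h$ to be the identity map on $V$. Hence $\psi(a,x) = (a,\, r(a) + x)$ for a unique linear $r: A \to V$, and unit preservation $\psi(1_A, 0_V) = (1_A, 0_V)$ is equivalent to $r(1_A) = 0$.

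\textbf{Step 2 (multiplicativity $\Longleftrightarrow$ (CH1)--(CH4)).} For $\psi = \psi_r$ of this form, I would expand both sides of $\psi_r\bigl((a,x) \star (b,y)\bigr) = \psi_r(a,x) \star' \psi_r(b,y)$ using \equref{hoproduct2}; the first coordinates match automatically (both give $ab$), leaving a single linear identity in $V$. The four conditions are then extracted by successive specialization: $x = y = 0$ isolates exactly (CH4); subtracting that equation from the full identity leaves a residual relation whose specializations $y = 0$, $x = 0$, and $a = b = 1_A$ (using (H0) together with $r(1_A) = 0$) yield (CH2), (CH3), and (CH1) respectively. Conversely, assuming (CH1)--(CH4), adding the three residual relations to (CH4) reassembles the full multiplicativity equation, so the four conditions are jointly equivalent to $\psi_r$ being an algebra map.

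\textbf{Step 3 (inverse).} The explicit formula makes $\psi_r$ a bijection of underlying vector spaces with set-theoretic inverse $(a,y) \mapsto (a,\, y - r(a)) = \psi_{-r}(a,y)$. Since the inverse of a bijective algebra map is again an algebra map, $\psi_{-r}: A \star' V \to A \star V$ is automatically an algebra morphism, giving the asserted isomorphism and the identity $\psi_r^{-1} = \psi_{-r}$. Alternatively, one can verify directly that $-r$ satisfies the analogues of (CH1)--(CH4) with $\Theta$ and $\Theta'$ swapped: (CH1$'$) is symmetric, (CH2$'$) and (CH3$'$) follow at once by substituting (CH1) into (CH2) and (CH3), and (CH4$'$) reduces, after applying (CH2) and (CH3) to the arguments $r(a)$ and $r(b)$, to a tautology of the form $2\, r(a) \cdot' r(b) = 2\, r(a) \cdot' r(b)$.

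The main obstacle is purely the bookkeeping in Step 2: the expanded multiplicativity equation carries around a dozen terms, but once one notices that $x = y = 0$ cleanly isolates the cocycle identity (CH4) and that the residual relation splits into three independent pieces under the specializations $x = 0$, $y = 0$, $a = b = 1_A$, the conditions (CH1)--(CH4) emerge in a natural order and the converse is immediate by summation.
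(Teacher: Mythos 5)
Your proposal is correct and follows essentially the same route as the paper: determine that stabilizing $V$ and co-stabilizing $A$ forces $\psi(a,x)=(a,r(a)+x)$, identify unit preservation with $r(1_A)=0$, and match the multiplicativity condition term by term against (CH1)--(CH4) (your specializations $x=y=0$, $y=0$, $x=0$, $a=b=1_A$ correspond exactly to the paper's check on the generator pairs $\{(a,0)\}\cup\{(0,x)\}$). The only difference is that you supply the verification of $\psi_r^{-1}=\psi_{-r}$, which the paper merely asserts; your check is valid.
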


\begin{proof} It is an elementary fact that a linear map $\psi: A \times V \to A \times V$
stabilizes $V$ and co-stabilizes $A$ if and only if there exists a
uniquely determined linear map $r: A \to V$ such that $\psi(a, x)
= (a, r(a)+ x)$, for all $a \in A$, $x \in V$. Let $\psi =
\psi_{r}$ be such a linear map. We will prove that $\psi: A \star
V \to A \star ' V$ is an algebra map if and only if $r(1_{A}) = 0$
and the compatibility conditions (CH1)-(CH4) hold. To start with
it is straightforward to see that $\psi$ preserve the unit $(1_A,
\, 0)$ if and only if $r(1_{A}) = 0$. The proof will be finished
if we check that the following compatibility holds for all
generators of $A \times V$:
\begin{equation}\eqlabel{algebramap222}
\psi\bigl((a, x) \star (b, y)\bigl) = \psi\bigl((a, x)\bigl) \,
\star ' \, \psi\bigl((b, y)\bigl)
\end{equation}
By a straightforward computation it follows that
\equref{algebramap222} holds for the pair $(a, 0)$, $(b, 0)$ if
and only if (CH4) is fulfilled while \equref{algebramap222} holds
for the pair $(0, x)$, $(a, 0)$ if and only if (CH2) is satisfied.
Finally, \equref{algebramap222} holds for the pair $(a, 0)$, $(0,
x)$ and respectively $(0, x)$, $(0, y)$ if and only if (CH3) and
respectively (CH1) hold.
\end{proof}

\leref{HHH} leads to the following:

\begin{definition}\delabel{echiaa}
Let $A$ be an algebra and $V$ a vector space. Two Hochschild
systems $\Theta(A, V)=(\triangleright, \, \triangleleft, \,
\vartheta, \, \cdot)$ and $\Theta'(A, V) = (\triangleright ', \,
\triangleleft ', \, \vartheta ', \, \cdot ')$ are called
\emph{cohomologous}, and we denote this by $\Theta(A, V) \approx
\Theta'(A, V)$, if and only if $\cdot = \cdot '$ and there exists
a linear map $r: A \to V$ such that $r(1_{A}) = 0$ and for any
$a$, $b \in A$, $x$, $y \in V$ we have:
\begin{eqnarray}
x \triangleleft a &=& x \triangleleft ' a + x \cdot ' r(a) \eqlabel{compa1}\\
a \triangleright x &=& a \triangleright ' x + r(a) \cdot ' x \eqlabel{compa2}\\
\vartheta(a, \,b) &=& \vartheta '(a, \, b) - r(ab) + a
\triangleright ' r(b) + r(a) \lhd ' b + r(a) \cdot ' r(b)
\eqlabel{compa3}
\end{eqnarray}
\end{definition}

As a conclusion of this section, we obtain the theoretical answer
to the GE-problem:

\begin{theorem}\thlabel{main1222}
Let $A$ be an algebra, $E$ a vector space and $\pi : E \to A$ a
linear epimorphism of vector spaces with $V = {\rm Ker} (\pi)$.
Then $\approx$ defined in \deref{echiaa} is an equivalence
relation on the set ${\mathcal H} {\mathcal S} (A, V)$ of all
Hochschild systems of $A$ by $V$. If we denote by ${\mathbb G}
{\mathbb H}^{2} \, (A, \, V) := {\mathcal H}\mathcal{S} (A, V)/
\approx $, then the map
\begin{equation} \eqlabel{thprinc}
{\mathbb G} {\mathbb H}^{2} \, (A, \, V) \to {\rm Gext} \, (A, \,
E), \qquad \overline{(\triangleright, \, \triangleleft, \,
\vartheta, \, \cdot)} \, \longmapsto \, A \star_{(\triangleleft,
\, \triangleright, \, \vartheta, \, \cdot)} \, V
\end{equation}
is bijective, where $\overline{(\triangleright, \, \triangleleft,
\, \vartheta, \, \cdot)}$ denotes the equivalence class of
$(\triangleright, \, \triangleleft, \, \vartheta, \, \cdot)$ via
$\approx$. 
\end{theorem}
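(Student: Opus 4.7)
The plan is to leverage \leref{HHH}, which provides a bridge between the combinatorial relation $\approx$ on Hochschild systems and the categorical notion of a stabilizing and co-stabilizing isomorphism between their Hochschild products. Once this bridge is in place, both parts of the theorem—that $\approx$ is an equivalence relation, and that the parameterizing map is a bijection—follow with essentially no new computation.

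\textbf{Step 1 (equivalence relation).} Comparing \deref{echiaa} with \leref{HHH}, I observe that conditions \equref{compa1}-\equref{compa3} together with $\cdot = \cdot'$ are literally (CH1)-(CH4). Hence $\Theta(A,V) \approx \Theta'(A,V)$ if and only if there exists an algebra morphism $\psi_r : A \star V \to A \star' V$ that stabilizes $V$ and co-stabilizes $A$; moreover \leref{HHH} guarantees that every such $\psi_r$ is automatically an isomorphism with inverse $\psi_{-r}$. Reflexivity is witnessed by $r=0$ (so $\psi_0 = {\rm Id}$); symmetry by the inversion formula; and transitivity follows from the elementary identity $\psi_{r'}\circ\psi_r = \psi_{r+r'}$, which reads off directly from $\psi_r(a,x) = (a, r(a)+x)$. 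Thus $\approx$ is an equivalence relation on ${\mathcal H}{\mathcal S}(A,V)$.

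\textbf{Step 2 (well-definedness).} Fix a linear section $s: A \to E$ of $\pi$ with $s(1_A) = 1_E$, so that $\varphi_s: A\times V \to E$, $\varphi_s(a,x):= s(a)+x$, is a vector space isomorphism satisfying $\pi\circ\varphi_s = \text{pr}_A$. Given a Hochschild system $\Theta$, transport the multiplication $\star$ of $A\star V$ along $\varphi_s$ to obtain an algebra structure $\cdot_E^{\Theta}$ on $E$ for which $\pi$ becomes an algebra map. If $\Theta\approx\Theta'$ via some $r$, conjugating the isomorphism $\psi_r$ from \leref{HHH} by $\varphi_s$ yields an algebra isomorphism $(E,\cdot_E^{\Theta}) \to (E,\cdot_E^{\Theta'})$ that stabilizes $V$ and co-stabilizes $A$. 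Hence the assignment $\overline\Theta \mapsto \overline{\cdot_E^\Theta}$ descends to equivalence classes.

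\textbf{Step 3 (bijectivity).} Surjectivity is exactly the content of \prref{hocechiv}: any unital algebra structure on $E$ making $\pi$ an algebra map is, via a stabilizing and co-stabilizing isomorphism, of the form $A \star V$ for some $\Theta$, whose class then maps to the given one. For injectivity, suppose $\cdot_E^\Theta \approx \cdot_E^{\Theta'}$ in ${\rm Gext}(A,E)$; conjugating the witnessing isomorphism by $\varphi_s^{-1}$ on the source and $\varphi_s$ on the target produces a stabilizing and co-stabilizing algebra isomorphism $A\star V \to A \star' V$. The converse direction of \leref{HHH} then extracts a linear map $r: A \to V$ with $r(1_A)=0$ satisfying (CH1)-(CH4), which is precisely the statement $\Theta \approx \Theta'$.

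\textbf{Main difficulty.} All the genuine technical work has been carried out in \prref{hocprod}, \prref{hocechiv}, and \leref{HHH}; the theorem itself is a formal synthesis of these results. The one subtle point is the implicit identification $E \cong A\times V$ through the section $s$, since a priori different sections yield different algebra structures on $E$. However, two such structures are always related by a stabilizing and co-stabilizing isomorphism (the change-of-section map), so ${\rm Gext}(A, E)$ depends only on $(A, V)$ and the map \equref{thprinc} is intrinsic on equivalence classes.
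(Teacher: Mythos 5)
Your proposal is correct and follows essentially the same route as the paper, whose own proof simply states that the theorem follows from \prref{hocprod}, \prref{hocechiv} and \leref{HHH}; you have merely made explicit the bookkeeping (reflexivity/symmetry/transitivity via $\psi_0$, $\psi_{-r}$ and $\psi_{r'}\circ\psi_r=\psi_{r+r'}$, plus the transport along a section $s$) that the authors leave implicit. The remark about independence of the choice of section is a worthwhile clarification but does not constitute a different method.
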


\begin{proof} Follows from \prref{hocprod}, \prref{hocechiv} and \leref{HHH}.
\end{proof}

Computing the classifying object ${\mathbb G} {\mathbb H}^{2} \,
(A, \, V)$, for a given algebra $A$ and a given vector space $V$
is a very difficult task. The first step in decomposing this
object is suggested by the way the equivalence relation $\approx$
was introduced in \deref{echiaa}: it shows that two different
associative algebra structures $\cdot$ and $\cdot^{'}$ on $V$ give
two different equivalence classes of the relation $\approx$ on
${\mathcal H} {\mathcal S} (A, V)$. Let us fix $\cdot_{V}$ an
associative multiplication on $V$ and denote by ${\mathcal H}
{\mathcal S}_{\cdot_{V}} (A, \, V)$ the set of all triples
$\bigl(\triangleleft, \, \triangleright, \, \vartheta \bigl)$ such
that $\bigl( \triangleright, \, \triangleleft, \, \vartheta, \,
\cdot_V \bigl) \in {\mathcal H} {\mathcal S} (A, V)$. Two triples
$\bigl(\triangleleft, \, \triangleright, \, \vartheta \bigl)$ and
$\bigl(\triangleleft', \, \triangleright', \, \vartheta' \bigl)
\in {\mathcal H} {\mathcal S}_{\cdot_{V}} (A, \, V)$ are
$\cdot_V$-cohomologous and we denote this by $\bigl(\triangleleft,
\, \triangleright, \, \vartheta \bigl) \, \approx_{\cdot_V}
\bigl(\triangleleft', \, \triangleright', \, \vartheta' \bigl)$ if
there exists a linear map $r: A \to V$ such that $r(1_{A}) = 0$
and the compatibility conditions \equref{compa1}-\equref{compa3}
are fulfilled for $\cdot' = \cdot_V$. Then $\approx_{\cdot_V}$ is
an equivalence relation on ${\mathcal H} {\mathcal S}_{\cdot_{V}}
(A, \, V)$ and we denote by ${\mathbb H}^{2} \, \bigl(A, \, (V, \,
\cdot_{V} )\bigl)$ the quotient set ${\mathcal H} {\mathcal
S}_{\cdot_{V}} (A, \, V)/ \approx_{\cdot_V}$. The non-abelian
cohomology ${\mathbb H}^{2} \, \bigl(A, \, (V, \, \cdot_{V}
)\bigl)$ classifies all extensions of the unital associative
algebra $A$ by a fixed associative algebra $(V, \, \cdot_{V})$.
This can be seen as a Schreier type theorem for associative
algebras: we mention that \cite[Theorem 6.2]{Hoch2} (see also
\cite[Proposition 3.7]{redondo}) follows as a special case of
\coref{schclas} if we let $\cdot_V$ to be the trivial map.

\begin{corollary} \colabel{schclas}
Let $A$ be a unital associative algebra and $(V, \cdot_V)$ an
associative multiplication on $V$. Then, the map
\begin{equation} \eqlabel{clasextprob}
{\mathbb H}^{2} \, \bigl(A, \, (V, \, \cdot_{V} )\bigl) \to {\rm
Ext} \, (A, \, (V, \, \cdot_V) ), \qquad
\overline{\overline{(\triangleright, \, \triangleleft, \,
\vartheta)}} \, \longmapsto \, A \star_{(\triangleleft, \,
\triangleright, \, \vartheta, \, \cdot_V)} \, V
\end{equation}
is bijective, where ${\rm Ext} \, (A, \, (V, \, \cdot_V) )$ is the
set of equivalence classes of all unital associative algebras that
are extensions of the algebra $A$ by $(V, \cdot_V)$ and
$\overline{\overline{(\triangleright, \, \triangleleft, \,
\vartheta)}}$ denotes the equivalence class of $(\triangleright,
\, \triangleleft, \, \vartheta)$ via $\approx_l$.
\end{corollary}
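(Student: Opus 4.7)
The plan is to deduce this as a direct restriction of \thref{main1222} to the slice of data with a prescribed multiplication $\cdot_V$ on $V$. The key observation is that axiom (CH1) of \leref{HHH} forces $\cdot = \cdot'$, so the equivalence relation $\approx$ of \deref{echiaa} can never identify two Hochschild systems whose multiplications on $V$ differ. Consequently, $\approx$ restricts to an equivalence relation on $\mathcal{HS}_{\cdot_V}(A,V)$, which is precisely $\approx_{\cdot_V}$, and each $\approx_{\cdot_V}$-class is also a full $\approx$-class. In particular, ${\mathbb H}^{2}(A, (V, \cdot_V))$ is naturally (as a set) a disjoint component of ${\mathbb G}{\mathbb H}^{2}(A, V)$.

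On the Ext-side, I would first check that the map is well-defined: if $\Theta = (\triangleright, \triangleleft, \vartheta, \cdot_V) \in \mathcal{HS}_{\cdot_V}(A,V)$, then the Hochschild product $A \star_\Theta V$ is a unital associative algebra sitting in the exact sequence \equref{extenho1}, and the computation $(0,x) \star (0,y) = (0, x \cdot_V y)$ shows that the induced multiplication on the kernel is exactly $\cdot_V$; hence $A \star_\Theta V$ is genuinely an extension of $A$ by the associative algebra $(V, \cdot_V)$. Moreover, if two systems in $\mathcal{HS}_{\cdot_V}(A,V)$ are $\approx_{\cdot_V}$-equivalent, then by \leref{HHH} the map $\psi_r$ is an isomorphism of algebras stabilizing $V$ and co-stabilizing $A$, so the corresponding extensions are equivalent in the classical sense.

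For injectivity and surjectivity, I would combine \prref{hocechiv} with \thref{main1222}: an arbitrary extension $0 \to (V, \cdot_V) \to E \to A \to 0$ is, by \prref{hocechiv}, isomorphic via a map stabilizing $V$ and co-stabilizing $A$ to some Hochschild product $A \star_{\Theta} V$; tracing through the construction of $\Theta$ in the proof of \prref{hocechiv}, the induced multiplication on $V$ is the restriction of the given one, i.e. $\cdot_V$, so $\Theta \in \mathcal{HS}_{\cdot_V}(A,V)$. This gives surjectivity. Injectivity follows from \leref{HHH}: any equivalence of extensions (which is automatically an algebra isomorphism stabilizing $V$ and co-stabilizing $A$) must arise from some $\psi_r$, and the resulting map $r$ witnesses $\Theta \approx_{\cdot_V} \Theta'$ because the $\cdot_V$-components are already equal by hypothesis. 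I do not expect any serious obstacle: the only point needing care is to notice that the $\approx$-partition of $\mathcal{HS}(A,V)$ refines along the choice of $\cdot_V$, so the bijection of \thref{main1222} descends fiberwise to the bijection asserted here.
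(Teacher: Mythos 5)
Your proposal is correct and follows essentially the argument the paper intends: the corollary is obtained by restricting the bijection of \thref{main1222} to the fiber $\mathcal{HS}_{\cdot_V}(A,V)$, using that (CH1) forces the $\approx$-partition to refine along the choice of $\cdot_V$, together with \prref{hocechiv} for surjectivity and \leref{HHH} for injectivity. The paper does not write out a separate proof, but your fiberwise descent is precisely the content of the paragraph preceding the corollary.
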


The above considerations give also the following decomposition of
${\mathbb G} {\mathbb H}^{2} \, (A, \, V)$:

\begin{corollary} \colabel{desccompcon}
Let $A$ be an algebra, $E$ a vector space and $\pi : E \to A$ an
epimorphism of vector spaces with $V = {\rm Ker} (\pi)$. Then
\begin{equation}\eqlabel{balsoi}
{\mathbb G} {\mathbb H}^{2} \, (A, \, V) = \, \sqcup_{\cdot_{V}}
\, {\mathbb H}^{2} \, \bigl(A, \, (V, \, \cdot_{V} )\bigl)
\end{equation}
where the coproduct on the right hand side is in the category of
sets over all possible associative algebra structures $\cdot_{V}$
on the vector space $V$.
\end{corollary}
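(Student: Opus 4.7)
The plan is to observe that this corollary is essentially a set-theoretic consequence of the shape of the equivalence relation $\approx$ given in \deref{echiaa}. The very first condition in that definition requires $\cdot = \cdot'$ for two Hochschild systems to be cohomologous, so $\approx$ can never identify systems with different underlying multiplications on $V$.

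Concretely, I would proceed in three short steps. First, I would note that the set $\mathcal{HS}(A,V)$ splits as a disjoint union
\begin{equation*}
\mathcal{HS}(A,V) \;=\; \bigsqcup_{\cdot_V} \mathcal{HS}_{\cdot_V}(A,V),
\end{equation*}
where the index runs over all associative multiplications $\cdot_V$ on $V$ and $\mathcal{HS}_{\cdot_V}(A,V)$ consists (as defined just before the corollary) of those triples $(\triangleleft, \triangleright, \vartheta)$ for which $(\triangleright, \triangleleft, \vartheta, \cdot_V)$ satisfies the axioms (H0)--(H8). Second, I would invoke the fact that $\approx$ preserves this stratification: if $\Theta \approx \Theta'$ then by \deref{echiaa} we have $\cdot = \cdot'$, so $\approx$ restricts to a relation on each stratum $\mathcal{HS}_{\cdot_V}(A,V)$, and the restricted relation is exactly $\approx_{\cdot_V}$ (since the compatibility conditions \equref{compa1}--\equref{compa3} with $\cdot' = \cdot_V$ are precisely those defining $\approx_{\cdot_V}$).

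Third, I would pass to quotients. Since disjoint union commutes with taking quotients by an equivalence relation that respects the stratification, we get
\begin{equation*}
\mathbb{G}\mathbb{H}^{2}(A,V) \;=\; \mathcal{HS}(A,V)/\!\approx \;=\; \bigsqcup_{\cdot_V} \mathcal{HS}_{\cdot_V}(A,V)/\!\approx_{\cdot_V} \;=\; \bigsqcup_{\cdot_V} \mathbb{H}^{2}\bigl(A,\,(V,\,\cdot_V)\bigl),
\end{equation*}
which is precisely \equref{balsoi}.

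There is really no hard step here, as the work has already been done: \thref{main1222} guarantees that $\approx$ is an equivalence relation, and \coref{schclas} identifies each piece $\mathbb{H}^{2}\bigl(A,\,(V,\,\cdot_V)\bigl)$ with the classifying object for extensions of $A$ by the associative algebra $(V,\cdot_V)$. The only thing to verify carefully is that $\approx_{\cdot_V}$ really is the restriction of $\approx$ to $\mathcal{HS}_{\cdot_V}(A,V)$, which is immediate from comparing the two definitions and noting that when $\cdot = \cdot' = \cdot_V$ the conditions \equref{compa1}--\equref{compa3} are identical in both formulations.
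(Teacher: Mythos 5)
Your proposal is correct and follows essentially the same route as the paper: the text preceding the corollary makes exactly your observation that the condition $\cdot = \cdot'$ in \deref{echiaa} forces $\approx$ to respect the stratification of ${\mathcal H}{\mathcal S}(A,V)$ by the multiplication on $V$, with the restriction to each stratum being $\approx_{\cdot_V}$, and the decomposition \equref{balsoi} then follows by passing to quotients. Nothing is missing.
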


By looking at formula \equref{balsoi} one can see that computing
${\mathbb G} {\mathbb H}^{2} \, (A, \, V)$ is a very laborious
task in which the first major barrier is describing all
associative multiplications on $V$. The complexity of the
computations involved increases along side with ${\rm dim} (V)$.
Among all components of the coproduct in \equref{balsoi} the
simplest one is that corresponding to the trivial associative
algebra structure on $V$, i.e. $x \cdot_V y := 0$, for all $x$, $y
\in V$. We shall denote this trivial algebra structure on $V$ by
$V_0 := (V, \, \cdot_V = 0)$ and we shall prove that ${\mathbb
H}^{2} \, \bigl(A, \, V_0 \bigl)$ is the coproduct of all
classical second cohomological groups. Indeed, let ${\mathcal H}
{\mathcal S}_0 (A, \, V_0)$ be the set of all triples
$\bigl(\triangleleft, \, \triangleright, \, \vartheta \bigl)$ such
that $\bigl( \triangleright, \, \triangleleft, \, \vartheta, \,
\cdot_V := 0 \bigl) \in {\mathcal H} {\mathcal S} (A, V)$.
\exref{abelian} shows that a triple $\bigl(\triangleleft, \,
\triangleright, \, \vartheta \bigl) \in {\mathcal H} {\mathcal
S}_0 (A, \, V_0)$ if and only if $(V, \triangleright,
\triangleleft)$ is an $A$-bimodule and $\vartheta : A \times A \to
V$ is a normalized $2$-cocycle. Two triples $\bigl(\triangleleft,
\, \triangleright, \, \vartheta \bigl)$ and $\bigl(\triangleleft',
\, \triangleright', \, \vartheta' \bigl) \in {\mathcal H}
{\mathcal S}_0 (A, \, V_0)$ are $0$-cohomologous
$\bigl(\triangleleft, \, \triangleright, \, \vartheta \bigl)
\approx_0 \bigl(\triangleleft', \, \triangleright', \, \vartheta'
\bigl)$ if and only if $\triangleleft = \triangleleft'$,
$\triangleright = \triangleright'$ and there exists a linear map
$r: A \to V$ such that $r(1_{A}) = 0$ and
\begin{equation}\eqlabel{cazslab}
\vartheta(a, \, b) = \vartheta '(a, \, b) - r(ab) + a
\triangleright  r(b) + r(a) \triangleleft b
\end{equation}
for all $a$, $b\in A$ -- these are the conditions remaining from
\deref{echiaa} applied for the trivial multiplication $\cdot :=
0$. The equalities $\triangleleft = \triangleleft'$ and
$\triangleright = \triangleright'$ show that two different
$A$-bimodule structures over $V$ give different equivalence
classes in the classifying object ${\mathbb H}^{2} \, \bigl(A, \,
V_0 \bigl)$. Thus, for computing it we can also fix $(V, \,
\triangleleft, \, \triangleright)$ an $A$-bimodule structure over
$V$ and consider the set ${\rm Z}^2_{(\triangleleft, \,
\triangleright)} \, (A, \, V_0) $ of all normalized Hochschild
$2$-cocycles: i.e. bilinear maps $\vartheta : A \times A \to V$
satisfying (H5) and the first condition of (H0). Two normalized
$2$-cocycles $\vartheta$ and $\vartheta'$ are cohomologous
$\vartheta \approx_0 \vartheta'$ if and only if there exists a
linear map $r: A \to V$ such that $r(1_{A}) = 0$ and
\equref{cazslab} holds. $\approx_0$ is an equivalence relation on
the set ${\rm Z}^2_{(\triangleleft, \, \triangleright)} \, (A, \,
V_0) $ and the quotient set ${\rm Z}^2_{(\triangleleft, \,
\triangleright)} \, (A, \, V_0)/ \approx_0$ is just the classical
second Hochschild cohomological group which we denote by ${\rm
H}^2_{(\triangleleft, \, \triangleright)} \, (A, \, V_0)$. All the
above considerations prove the following:

\begin{corollary}\colabel{cazuabspargere}
Let $A$ be an algebra and $V$ a vector space viewed with the
trivial associative algebra structure $V_0$. Then:
\begin{equation}\eqlabel{balsoi2}
{\mathbb H}^{2} \, \bigl(A, \, V_0 \bigl) \, = \,
\sqcup_{(\triangleleft, \, \triangleright)} \, {\rm
H}^2_{(\triangleleft, \, \triangleright)} \, (A, \, V_0)
\end{equation}
where the coproduct on the right hand side is in the category of
sets over all possible $A$-bimodule structures $(\triangleleft, \,
\triangleright)$ on the vector space $V$.
\end{corollary}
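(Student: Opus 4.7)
The plan is to simply package the observations made in the paragraph immediately preceding the statement into a clean proof, using Example~\exref{abelian}(1) to identify the underlying set ${\mathcal H} {\mathcal S}_0 (A, \, V_0)$ and then unpacking Definition~\deref{echiaa} in the special case $\cdot = \cdot' = 0$.

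First I would invoke Example~\exref{abelian}(1): a Hochschild data $(\triangleright, \triangleleft, \vartheta, \cdot = 0)$ belongs to ${\mathcal H} {\mathcal S} (A, V)$ if and only if $(V, \triangleright, \triangleleft)$ is an $A$-bimodule and $\vartheta$ is a normalized Hochschild $2$-cocycle. This identifies
\[
{\mathcal H} {\mathcal S}_0 (A, \, V_0) \,\cong\, \bigsqcup_{(\triangleleft, \triangleright)} \, {\rm Z}^2_{(\triangleleft, \triangleright)} \, (A, \, V_0),
\]
where the disjoint union runs over all $A$-bimodule structures on $V$.

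Next I would specialize the equivalence relation $\approx_{\cdot_V}$ of Definition~\deref{echiaa} to $\cdot_V = \cdot'_V = 0$. Condition \equref{compa1} becomes vacuous, while conditions \equref{compa1}--\equref{compa2} force $x \triangleleft a = x \triangleleft' a$ and $a \triangleright x = a \triangleright' x$ (since all terms involving $\cdot'$ vanish). Hence $\approx_0$ can only relate triples sharing the \emph{same} underlying $A$-bimodule structure. The remaining condition \equref{compa3} collapses to precisely \equref{cazslab}, the usual coboundary relation for normalized Hochschild $2$-cocycles with values in the $A$-bimodule $(V, \triangleleft, \triangleright)$.

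Consequently, the equivalence relation $\approx_0$ on ${\mathcal H} {\mathcal S}_0 (A, \, V_0)$ decomposes as a disjoint union of the equivalence relations on each ${\rm Z}^2_{(\triangleleft, \triangleright)} \, (A, \, V_0)$, and the quotient of each piece is by definition ${\rm H}^2_{(\triangleleft, \triangleright)} \, (A, \, V_0)$. Passing to quotients yields the asserted coproduct decomposition \equref{balsoi2}. There is no genuine obstacle here: the result is essentially a bookkeeping exercise confirming that the fibers of the forgetful map $(\triangleright, \triangleleft, \vartheta) \mapsto (\triangleright, \triangleleft)$ are preserved by $\approx_0$ and the induced equivalence on each fiber is the classical Hochschild coboundary relation.
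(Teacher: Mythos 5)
Your proof is correct and follows essentially the same route as the paper, which likewise derives the decomposition by observing (via \exref{abelian}) that the triples with trivial multiplication are exactly the pairs consisting of an $A$-bimodule structure and a normalized $2$-cocycle, and that $\approx_0$ forces the two bimodule structures to coincide and reduces to the classical coboundary relation \equref{cazslab} on each fiber. One small wording slip: it is the condition $\cdot = \cdot'$ of \deref{echiaa} that becomes vacuous, whereas \equref{compa1} itself is what forces $\triangleleft = \triangleleft'$ (as you in fact use in the next clause).
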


\section{Co-flag algebras. Examples.} \selabel{coflg}

In this section we apply the theoretical results obtained in
\seref{glalg} for some concrete examples: more precisely, for a
given algebra $A$ we shall classify all unital associative
algebras $B$ such that there exists a surjective algebra map $\pi
: B \to A$ having a $1$-dimensional kernel, which as a vector
space will be assumed to be $k$. First, we shall compute ${\mathbb
G} {\mathbb H}^{2} \, (A, \, k)$: it will classify all these
algebras up to an isomorphism which stabilizes $k$ and
co-stabilizes $A$. Then, we will compute the second classifying
object, denoted by ${\mathbb H} {\mathbb O} {\mathbb C} \, (A, \,
k)$, which will provide the classification of these algebras only
up to an isomorphism. Computing both classifying objects is the
key step in a recursive algorithm for describing and classifying
the new class of algebras defined as follows:

\begin{definition} \delabel{coflaglbz}
Let $A$ be an algebra and $E$ a vector space. A unital associative
algebra structure $\cdot_E$ on $E$ is called a \emph{co-flag
algebra over $A$} if there exists a positive integer $n$ and a
finite chain of surjective morphisms of algebras
\begin{equation} \eqlabel{lant}
A_n : = (E, \cdot_E) \stackrel{\pi_{n}}{\longrightarrow} A_{n-1}
\stackrel{\pi_{n-1}}{\longrightarrow} A_{n-2} \, \cdots \,
\stackrel{\pi_{2}}{\longrightarrow} A_1 \stackrel{\pi_{1}}
{\longrightarrow} A_{0} := A
\end{equation}
such that ${\rm dim}_k ( {\rm Ker} (\pi_{i}) ) = 1$, for all $i =
1, \cdots, n$. A finite dimensional algebra is called a
\emph{co-flag algebra} if it is a co-flag algebra over the unital
algebra $k$.
\end{definition}

By applying successively \prref{hocechiv} we obtain that a co-flag
algebra over an algebra $A$ is isomorphic to an iteration of
Hochschild products of the form $\bigl(\cdots \bigl( (A \star
k)\star k \bigl) \star \cdots \star k\bigl)$, where the
$1$-dimensional vector space $k$ appears $n$ times in the above
product. The tools used for describing co-flag algebras are the
following:

\begin{definition} \delabel{coflag}
Let $A$ be an algebra. A \emph{co-flag datum of the first kind of
$A$} is a triple $(\lambda, \Lambda, \vartheta)$ consisting of two
algebra maps\footnote{Recall that we assume the algebra maps
$\lambda: A \to k$ to be unit preserving, i.e. $\lambda (1_A) =
1$.} $\lambda$, $\Lambda : A \to k$ and a bilinear map $\vartheta
: A \times A \to k$ satisfying the following compatibilities for
any $a$, $b$, $c\in A$:
\begin{eqnarray}
\vartheta (a, 1_A) = \vartheta (1_A, a) = 0, \qquad \vartheta (a,
bc) - \vartheta (ab, c) = \vartheta ( a, b) \Lambda (c) -
\vartheta (b, c) \lambda (a) \eqlabel{ciudat1}
\end{eqnarray}
A \emph{co-flag datum of the second kind of $A$} is a pair
$(\lambda, u)$ consisting of a linear map $\lambda : A \to k$ such
that $\lambda (1_A) = 1$ and a non-zero scalar $u\in k^*$.
\end{definition}

We denote by ${\mathcal C} {\mathcal F}_1 \, (A)$ (resp.
${\mathcal C} {\mathcal F}_2 \, (A)$) the set of all co-flag data
of the first (resp. second) kind of $A$ and by ${\mathcal C}
{\mathcal F} \, (A) := {\mathcal C} {\mathcal F}_1 \, (A) \,
\sqcup \, {\mathcal C} {\mathcal F}_2 \, (A)$ their coproduct; the
elements of ${\mathcal C} {\mathcal F} \, (A)$  are called
\emph{co-flag data} of $A$. The set of co-flag data ${\mathcal C}
{\mathcal F} \, (A)$ parameterizes the set of all Hochschild
systems of $A$ by a $1$-dimensional vector space. The next result
also describes the first algebra $A_1$ from the exact sequence
\equref{lant} in terms depending only on $A$.

\begin{proposition}\prlabel{cohocflag}
Let $A$ be an algebra. Then there exists a bijection ${\mathcal
H}{\mathcal S} \, (A, \, k) \cong {\mathcal C} {\mathcal F} \,
(A)$ between the set of all Hochschild systems of $A$ by $k$ and
the set of all co-flag data of $A$ given such that the Hochschild
product $A \star k$ associated to $(\lambda, \Lambda, \vartheta)
\in {\mathcal C} {\mathcal F}_1 \, (A)$ is the algebra denoted by
$A_{(\lambda, \Lambda, \vartheta)}$ with the multiplication given
for any $a$, $b \in A$, $x$, $y\in k$ by:
\begin{equation} \eqlabel{patratnul}
(a, x) \star (b, y) = \bigl(ab, \,\, \vartheta (a, b) + \lambda
(a) y + \Lambda (b) x \bigl)
\end{equation}
while the Hochschild product $A \star k$ associated to $(\lambda,
u) \in {\mathcal C} {\mathcal F}_2 \, (A)$ is the algebra denoted
by $A^{(\lambda, u)}$ with the multiplication given for any $a$,
$b \in A$, $x$, $y\in k$ by:
\begin{equation} \eqlabel{patratarbitar}
(a, x) \star (b, y) = \bigl(ab, \,\, u^{-1} \bigl(\lambda(a)
\lambda(b) - \lambda(ab) \bigl) + \lambda (a) y + \lambda (b) x +
u  \, xy \bigl)
\end{equation}
\end{proposition}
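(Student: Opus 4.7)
The plan is to exploit that $V = k$ is one-dimensional, which collapses every bilinear ingredient of a Hochschild system into scalar data on $A$. First I would parametrize any Hochschild data $\Theta(A, k) = (\triangleright, \triangleleft, \vartheta, \cdot)$ by writing
$$
a \triangleright x = \lambda(a)\, x, \qquad x \triangleleft a = \Lambda(a)\, x, \qquad x \cdot y = u\, xy
$$
for uniquely determined linear maps $\lambda, \Lambda : A \to k$ and a unique scalar $u := 1 \cdot 1 \in k$, while $\vartheta : A \times A \to k$ remains bilinear. The associativity axiom (H8) is then automatic, so everything reduces to translating (H0)--(H7) into relations among $(\lambda, \Lambda, \vartheta, u)$.

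Next, I would carry out this translation systematically. Axioms (H1), (H3) and (H4) hold tautologically on both sides; (H0) amounts to $\lambda(1_A) = \Lambda(1_A) = 1$ together with the normalization $\vartheta(a, 1_A) = \vartheta(1_A, a) = 0$. The pivotal axiom is (H2), which after substitution reads $u\lambda(a) = u\Lambda(a)$ for all $a \in A$; this forces the fundamental dichotomy $u = 0$ or $\lambda = \Lambda$, mirroring exactly the decomposition $\mathcal{CF}(A) = \mathcal{CF}_1(A) \sqcup \mathcal{CF}_2(A)$. In the first branch ($u = 0$, i.e.\ $\cdot$ is the trivial map on $k$), axioms (H6) and (H7) reduce to $\lambda(ab) = \lambda(a)\lambda(b)$ and $\Lambda(ab) = \Lambda(a)\Lambda(b)$, so both become unit-preserving algebra maps, and (H5) specialises to the mixed cocycle identity in \equref{ciudat1}; the data is then precisely a triple $(\lambda, \Lambda, \vartheta) \in \mathcal{CF}_1(A)$, and substitution into the general formula \equref{hoproduct2} recovers \equref{patratnul}. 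In the second branch ($u \in k^*$), the forced equality $\Lambda = \lambda$ combined with (H6) rigidly prescribes $\vartheta(a, b) = u^{-1}\bigl(\lambda(a)\lambda(b) - \lambda(ab)\bigl)$, after which (H7) reproduces the same identity; the free data reduce to $(\lambda, u) \in \mathcal{CF}_2(A)$, and \equref{hoproduct2} becomes \equref{patratarbitar}.

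The only genuinely computational point is verifying that in the second branch the mixed $2$-cocycle axiom (H5) is automatically implied by the closed form of $\vartheta$ above; I expect this to be the main (though still routine) obstacle, reducing after expansion to the cancellation of the $\lambda(abc)$-terms on the two sides. Everything else amounts to a systematic matching of the axioms (H0)--(H8) against the two definitions of co-flag data, which establishes the bijection $\mathcal{H}\mathcal{S}(A, k) \cong \mathcal{CF}(A)$ and the two explicit multiplication formulas simultaneously.
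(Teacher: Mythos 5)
Your proposal is correct and follows essentially the same route as the paper's proof: the identical parametrization $a \triangleright x = \lambda(a)x$, $x \triangleleft a = \Lambda(a)x$, $x\cdot y = u\,xy$, the same observation that (H1), (H3), (H4), (H8) are automatic, the same reading of (H2) as $u\lambda = u\Lambda$ producing the dichotomy $u=0$ versus $u\in k^*$, and the same determination of $\vartheta$ from (H6) in the second branch. The one step you flag as needing verification -- that (H5) holds automatically for $\vartheta(a,b) = u^{-1}\bigl(\lambda(a)\lambda(b)-\lambda(ab)\bigr)$ -- is exactly the point the paper also singles out (and dispatches as an easy check), so nothing is missing.
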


\begin{proof}
We have to compute the set of all bilinear maps $\triangleright :
A \times k \to k$, $\triangleleft : k \times A \to k$, $ \vartheta
: A\times A \to k$ and $\cdot \, : k\times k \to k $ satisfying
the compatibility conditions (H0)-(H8) of \prref{hocprod}. Since
$k$ has dimension $1$ there exists a bijection between the set of
all Hochschild datums $\bigl( \triangleright, \, \triangleleft, \,
\vartheta, \, \cdot \bigl)$ of $A$ by $k$ and the set of all
$4$-tuples $(\Lambda, \, \lambda, \, \vartheta, \, u)$ consisting
of two linear maps $\Lambda$, $\lambda: A \to k$, a bilinear map
$\vartheta: A \times A \to k$ and a scalar $u\in k$. The bijection
is given such that the Hochschild datum $\bigl( \triangleright, \,
\triangleleft, \, \vartheta, \, \cdot \bigl)$ corresponding to
$(\Lambda, \, \lambda, \, \vartheta, \, u)$ is defined as follows:
\begin{eqnarray*}
a\triangleright x := \lambda (a) x, \,\,\,\,\, x \triangleleft a
:= \Lambda (a) x, \,\,\,\,\, x \cdot y := u \, x y
\end{eqnarray*}
for all $a \in A$ and $x$, $y\in k$. Now, axiom (H0) holds if and
only if $\vartheta (a, 1_A) = \vartheta (1_A, a) = 0$ and $\lambda
(1_A) = \Lambda (1_A) = 1$. Axioms (H1), (H3), (H4) and (H8) are
trivially fulfilled. Axiom (H5) is equivalent to $\vartheta (a,
bc) - \vartheta (ab, c) = \vartheta ( a, b) \Lambda (c) -
\vartheta (b, c) \lambda (a)$, axiom (H6) is equivalent to
\begin{equation} \eqlabel{faracoc}
\lambda (ab) = \lambda(a) \lambda (b) - u \, \vartheta (a, b)
\end{equation}
while axiom (H7) is equivalent to $\Lambda (ab) = \Lambda(a)
\Lambda (b) - u \, \vartheta (a, b)$. Finally, axiom (H2) is
equivalent to $u \Lambda (a) = u \lambda (a)$, for all $a \in A$.
A discussion on $u$ is imposed by the last compatibility condition
and the conclusion follows easily: ${\mathcal C} {\mathcal F}_1 \,
(A)$ corresponds to the case when $u = 0$ and this will give rise
to the algebras $A \star k = A_{(\lambda, \Lambda, \theta)}$. The
case ${\mathcal C} {\mathcal F}_2 \, (A)$ corresponds to $u \neq
0$; in this case $\Lambda = \lambda$ and the cocycle $\vartheta$
is implemented by $u$ and $\lambda$ via the formula $\vartheta (a,
b) := u^{-1} \bigl(\lambda(a) \lambda(b) - \lambda(ab) \bigl)$,
for all $a$, $b\in A$, that arises from \equref{faracoc}.
Moreover, we can easily check that axiom \equref{ciudat1} is
trivially fulfilled for $\vartheta$ defined as above. The algebra
$A^{(\lambda, u)}$ is just the Hochschild product $A \star k$
associated to this context.
\end{proof}

\begin{remarks} \relabel{nucleunul}
$(1)$ The first family of Hochschild products $A_{(\lambda,
\Lambda, \theta)}$ constructed in \prref{cohocflag} corresponds to
the classical case in which $k \cong 0 \times k$ is a two-sided
ideal of null square in the algebra $A_{(\lambda, \Lambda,
\theta)}$. The algebras $A_{(\lambda, \Lambda, \theta)}$ will be
classified up to an isomorphism in \thref{clasres} below. For the
new families of algebras $A^{(\lambda, u)}$ the kernel of the
canonical projection $\pi_A : A^{(\lambda, u)} \to k$ is equal to
$k \cong 0 \times k$ and this is not a null square ideal since
$(0, 1) \star (0, 1) = (0, \, u) \neq (0, 0)$. Let $(\lambda, u)
\in {\mathcal C} {\mathcal F}_2 \, (A)$ be a co-flag datum of the
second kind of $A$. Taking into account the multiplication on
$A^{(\lambda, \, u)}$ given by \equref{patratarbitar} we can
easily prove that the map:
\begin{equation} \eqlabel{izocaut}
\varphi : A^{(\lambda, \, u)} \to A \times k, \qquad \varphi (a,
x) := (a, \, \lambda (a) + u\, x)
\end{equation}
for all $a\in A$ and $x\in k$ is an isomorphism of algebras (which
does not stabilize $k$, if $u \neq 1$), where $A \times k$ is the
usual direct product of algebras. The inverse of $\varphi$ is
given by $\varphi^{-1} (a, x) = \bigl(a, \, u^{-1} (x - \lambda
(a)\bigl)$, for all $a\in A$ and $x\in k$.
\end{remarks}

We will now describe the algebra $A_{(\lambda, \Lambda,
\vartheta)}$ and $A^{(\lambda, u)}$ by generators and relations.
The elements of $A$ will be seen as elements in $A \times k$ via
the identification $a = (a, \, 0)$ and we denote by $f := (0_A, \,
1) \in A\times k$. Let $\{ e_i \, | \, i \in I \}$ be a basis of
$A$ as a vector space over $k$. Then the algebra $A_{(\lambda,
\Lambda, \vartheta)}$ is the vector space having $\{ f, \, e_i \,
| \, i \in I \}$ as a basis and the multiplication $\star$ given
for any $i\in I$ by:
\begin{equation}\eqlabel{primaalg}
e_i \star e_j = e_i \cdot_A e_j + \vartheta(e_i, \, e_j) \, f,
\,\,\, f^2 = 0, \,\,\, e_i \star f = \lambda(e_i)\,f, \,\,\,
f\star e_i = \Lambda (e_i) \ f
\end{equation}
where $\cdot_A$ denotes the multiplication on $A$. The algebra
$A^{(\lambda, u)}$ is the vector space having $\{ f, \, e_i \, |
\, i \in I \}$ as a basis and the multiplication $\star$ given for
any $i\in I$ by:
\begin{equation}\eqlabel{doiaalg}
e_i \star e_j = e_i \cdot_A e_j + u^{-1} \bigl( \lambda(e_i)
\lambda(e_j) - \lambda(e_i \cdot_A e_j)  \bigl)\, f, \,\,\,\, f^2
= u\, f, \,\,\,\, e_i \star f = f\star e_i = \lambda(e_i)\,f
\end{equation}

Using \prref{cohocflag}, \prref{hocechiv} and the isomorphism
$A^{(\lambda, \, u)} \simeq A \times k$ we obtain:

\begin{corollary}\colabel{descefecti}
Let $A$ be an algebra. A unital associative algebra $B$ has a
surjective algebra map $B \to A\to 0$ whose kernel is
$1$-dimensional if and only if $B$ is isomorphic to $A \times k$
or $A_{(\lambda, \Lambda, \vartheta)}$, for some $(\lambda,
\Lambda, \vartheta) \in {\mathcal C} {\mathcal F}_1 \, (A)$.
\end{corollary}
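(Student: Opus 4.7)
The plan is to assemble the results already proved in this section, namely \prref{hocechiv}, \prref{cohocflag}, and \reref{nucleunul}. The statement has two directions, and the content is entirely a matter of synthesis.

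For the ``if'' direction, both candidate algebras manifestly admit the required data. The direct product $A \times k$ has the canonical projection onto the first factor, whose kernel $\{0\} \times k$ is one-dimensional. For any $(\lambda, \Lambda, \vartheta) \in {\mathcal C}{\mathcal F}_1(A)$, the algebra $A_{(\lambda, \Lambda, \vartheta)}$ is by \prref{cohocflag} a Hochschild product $A \star k$, so the canonical surjection from the extension \equref{extenho1} provides an algebra map onto $A$ with one-dimensional kernel $\{0\} \times k$.

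For the ``only if'' direction, suppose $B$ is a unital associative algebra with a surjective algebra map $\pi : B \to A$ such that $\dim_k \ker(\pi) = 1$. Fixing any nonzero vector of $\ker(\pi)$ identifies $\ker(\pi)$ with $k$ as a vector space, so we may apply \prref{hocechiv} with $E = B$ and $V = k$: this yields a Hochschild system of $A$ by $k$ together with an isomorphism of algebras from $B$ onto the associated Hochschild product $A \star k$. By \prref{cohocflag} any such Hochschild product is of one of exactly two forms: $A_{(\lambda, \Lambda, \vartheta)}$ for some co-flag datum of the first kind $(\lambda, \Lambda, \vartheta) \in {\mathcal C}{\mathcal F}_1(A)$, or $A^{(\lambda, u)}$ for some co-flag datum of the second kind $(\lambda, u) \in {\mathcal C}{\mathcal F}_2(A)$.

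What remains is to reduce the second case to the direct product. This is precisely what is observed in \reref{nucleunul}: the map $\varphi$ in \equref{izocaut} is an explicit algebra isomorphism $A^{(\lambda, u)} \cong A \times k$, valid because $u \in k^*$ by definition of ${\mathcal C}{\mathcal F}_2(A)$. Thus any $B$ whose associated Hochschild system lies in the second family is already isomorphic to $A \times k$ as an algebra, and the statement follows. There is no real obstacle here; the only minor point to keep in mind is that we are asking for an isomorphism of algebras only, so we are free to discard the finer equivalence relation $\approx$ of \deref{echiaa} (which is the province of the classifying object $\mathbb{G}\mathbb{H}^{2}(A, k)$) and simply merge the $A^{(\lambda, u)}$ family into the single isomorphism class of $A \times k$.
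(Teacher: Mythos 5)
Your proof is correct and follows exactly the route the paper takes: combine \prref{hocechiv} and \prref{cohocflag} to reduce to the two families $A_{(\lambda, \Lambda, \vartheta)}$ and $A^{(\lambda, u)}$, then use the explicit isomorphism \equref{izocaut} from \reref{nucleunul} to identify the second family with $A \times k$. Nothing to add.
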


We are now able to compute the classifying object ${\mathbb G}
{\mathbb H}^{2} \, (A, \, k)$.

\begin{proposition} \prlabel{hoccal1}
Let $A$ be an algebra. Then,
$$
{\mathbb G} {\mathbb H}^{2} \, (A, \, k) \cong \Bigl({\mathcal C}
{\mathcal F}_1 \, (A)/ \approx_1 \Bigl)\,\, \sqcup \,\, k^*
$$
where $\approx_1$ is the following equivalence relation on
${\mathcal C} {\mathcal F}_1 \, (A)$: $(\lambda, \Lambda,
\vartheta) \approx_1 (\lambda', \Lambda', \vartheta')$ if and only
if $\lambda = \lambda'$, $\Lambda = \Lambda'$  and there exists a
linear map $t: A \to k$ such that for any $a$, $b\in A$:
\begin{equation} \eqlabel{primaechiv}
\vartheta (a, b) = \vartheta'(a, b) - t(ab) + \lambda'(a) t(b) +
\Lambda'(b) t(a)
\end{equation}
\end{proposition}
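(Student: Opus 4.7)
The plan is to unwind the description of ${\mathbb G}{\mathbb H}^2(A,k)$ given by \thref{main1222} using the bijection ${\mathcal H}{\mathcal S}(A,k) \cong {\mathcal C}{\mathcal F}(A) = {\mathcal C}{\mathcal F}_1(A) \sqcup {\mathcal C}{\mathcal F}_2(A)$ from \prref{cohocflag}, and then translate the equivalence relation $\approx$ of \deref{echiaa} through this bijection. The first observation is that $\approx$ requires $\cdot = \cdot'$; since the multiplication $\cdot$ on $k$ is encoded by the scalar $u$ (trivial for ${\mathcal C}{\mathcal F}_1(A)$ and equal to $u \in k^*$ for ${\mathcal C}{\mathcal F}_2(A)$), the relation $\approx$ never mixes ${\mathcal C}{\mathcal F}_1(A)$ with ${\mathcal C}{\mathcal F}_2(A)$, and on the second set it forces $u = u'$. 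This already yields a decomposition ${\mathbb G}{\mathbb H}^2(A,k) \cong ({\mathcal C}{\mathcal F}_1(A)/\approx_1) \sqcup (\sqcup_{u \in k^*} {\mathcal C}{\mathcal F}_2^u(A)/\approx_2)$, where ${\mathcal C}{\mathcal F}_2^u(A)$ is the $u$-slice.

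Next I handle the ${\mathcal C}{\mathcal F}_1(A)$ piece. Using the dictionary $a \triangleright x = \lambda(a)x$, $x \triangleleft a = \Lambda(a)x$, $x \cdot y = 0$ from the proof of \prref{cohocflag}, conditions \equref{compa1}--\equref{compa2} of \deref{echiaa} immediately become $\Lambda = \Lambda'$ and $\lambda = \lambda'$ (because $\cdot' = 0$ kills the $r$-terms), and \equref{compa3} reduces to precisely the relation \equref{primaechiv} with $t := r$. The only subtlety is the normalization $r(1_A) = 0$ from \deref{echiaa}, which is absent in the statement of $\approx_1$: but evaluating \equref{primaechiv} at $a = b = 1_A$, using $\vartheta(1_A,1_A) = \vartheta'(1_A,1_A) = 0$ (first axiom of \equref{ciudat1}) and $\lambda'(1_A) = \Lambda'(1_A) = 1$, yields $0 = t(1_A)$. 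Hence the two formulations of $\approx_1$ coincide on ${\mathcal C}{\mathcal F}_1(A)$.

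The remaining task, and the one that needs a short calculation, is to show that for each fixed $u \in k^*$ the quotient ${\mathcal C}{\mathcal F}_2^u(A)/\approx_2$ is a singleton, so that summing over $u$ produces the factor $k^*$. Given $(\lambda, u), (\lambda', u) \in {\mathcal C}{\mathcal F}_2(A)$, I define $r := u^{-1}(\lambda - \lambda') : A \to k$. Since $\lambda(1_A) = \lambda'(1_A) = 1$ we get $r(1_A) = 0$; and substituting into \equref{compa1}--\equref{compa2} (with $a \triangleright x = \lambda(a)x$, $x \cdot' y = u\,xy$) gives the required identities. The only thing to check is \equref{compa3}: expanding $\vartheta(a,b) = u^{-1}(\lambda(a)\lambda(b) - \lambda(ab))$ with $\lambda = \lambda' + u r$ and grouping the terms reproduces exactly $\vartheta'(a,b) - r(ab) + \lambda'(a) r(b) + \lambda'(b) r(a) + u\, r(a) r(b)$. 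Thus any two elements of ${\mathcal C}{\mathcal F}_2^u(A)$ are $\approx_2$-equivalent, the class is determined by $u$ alone, and the bijection ${\mathcal C}{\mathcal F}_2(A)/\approx \,\cong k^*$ follows.

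Putting the two pieces together via the coproduct structure of ${\mathcal C}{\mathcal F}(A)$ gives the claimed description of ${\mathbb G}{\mathbb H}^2(A,k)$. The main (and essentially only) technical point is the algebraic identity verifying \equref{compa3} in the ${\mathcal C}{\mathcal F}_2$-case; everything else is a routine transcription of \deref{echiaa} through the dictionary supplied by \prref{cohocflag}.
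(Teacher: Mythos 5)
Your proposal is correct and follows essentially the same route as the paper: apply \thref{main1222} together with the dictionary of \prref{cohocflag}, observe that $\approx$ cannot mix the two kinds of co-flag data, translate \equref{compa1}--\equref{compa3} into \equref{primaechiv} on ${\mathcal C}{\mathcal F}_1(A)$, and collapse each $u$-slice of ${\mathcal C}{\mathcal F}_2(A)$ to a point (the paper phrases this as choosing $\{(\lambda^0,u)\mid u\in k^*\}$ as a system of representatives, which is the same computation as your $r:=u^{-1}(\lambda-\lambda')$). Your explicit derivation of $t(1_A)=0$ from \equref{primaechiv} at $a=b=1_A$ is a nice touch that the paper only asserts in passing.
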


\begin{proof}
It follows from \thref{main1222} and \prref{cohocflag} that
$$
{\mathbb G} {\mathbb H}^{2} \, (A, \, k) \cong \Bigl({\mathcal C}
{\mathcal F}_1 \, (A)/ \approx_1 \Bigl)\,\, \sqcup \,\,
\Bigl({\mathcal C} {\mathcal F}_2 \, (A)/ \approx_2 \Bigl)
$$
where the equivalence relation $\approx_i$ on ${\mathcal C}
{\mathcal F}_i \, (A)$, for $i = 1, 2$, is just the equivalence
relation $\approx$ from \deref{echiaa} written for the sets
${\mathcal C} {\mathcal F}_i \, (A)$ via the bijection ${\mathcal
H}{\mathcal S} \, (A, \, k) \cong {\mathcal C} {\mathcal F} \,
(A)$ given in \prref{cohocflag}. The equivalence relation
$\approx$ written on the set of all co-flag data of the first kind
takes precisely the form from the statement -- we mention that a
linear map $t$ satisfying \equref{primaechiv} has the property
that $t(1_A) = 0$. The equivalence relation $\approx$ written on
${\mathcal C} {\mathcal F}_2 \, (A)$ takes the following form:
$(\lambda, u) \approx_2 (\lambda', u')$ if and only if $u = u'$
and there exists a linear map $t: A \to k$ such that for any $a\in
A$ we have:
\begin{equation}\eqlabel{primaechivii}
\lambda (a) = \lambda'(a) + t(a)\, u'
\end{equation}
Now, if we fix a unit preserving linear map $\lambda^0 : A \to k$
we obtain that the set $\{ (\lambda^0, \, u) \, | \, u \in k^* \}$
is a system of representatives for the equivalence relation
$\approx_2$ on ${\mathcal C} {\mathcal F}_2 \, (A)$ and hence
${\mathcal C} {\mathcal F}_2 \, (A)/ \approx_2 \,\, \cong \, k^*$,
which finishes the proof.
\end{proof}

The way $\approx_1$ is defined in \prref{hoccal1} indicates the
decomposition of $\Bigl({\mathcal C} {\mathcal F}_1 \, (A)/
\approx_1 \Bigl)$ as follows: for two fixed algebra maps
$(\lambda, \, \Lambda) \in {\rm Alg} (A, \, k)$ we shall denote by
${\rm Z}^2_{(\lambda, \, \Lambda)} \, (A, \, k)$ the set of all
normalized $(\lambda, \, \Lambda)$-cocycles; that is, the set of
all bilinear maps $\vartheta : A \times A \to k$ satisfying the
following compatibilities for any $a$, $b$, $c\in A$:
\begin{eqnarray*}
\vartheta (a, 1_A) = \vartheta (1_A, a) = 0, \qquad \vartheta (a,
bc) - \vartheta (ab, c) = \vartheta ( a, b) \Lambda (c) -
\vartheta (b, c) \lambda (a) \eqlabel{ciudat1a}
\end{eqnarray*}
Two $(\lambda, \, \Lambda)$-cocycles $\vartheta$, $\vartheta' : A
\times A \to k$ are equivalent $\vartheta \approx_{1}^{(\lambda,
\, \Lambda)} \vartheta'$ if and only if there exists a linear map
$t: A \to k$ such that
\begin{equation} \eqlabel{ecfolo}
\vartheta (a, b) = \vartheta'(a, b) - t(ab) + \lambda(a) t(b) +
\Lambda(b) t(a)
\end{equation}
for all $a$, $b\in A$. If we denote ${\rm H}^2_{(\lambda, \,
\Lambda)} \, (A, \, k) := {\rm Z}^2_{(\lambda, \, \Lambda)} \, (A,
\, k)/ \approx_{1}^{(\lambda, \, \Lambda)}$ we obtain the
following decomposition of ${\mathbb G} {\mathbb H}^{2} \, (A, \,
k)$:

\begin{corollary} \colabel{hoccal1ab}
Let $A$ be an algebra. Then,
\begin{equation} \eqlabel{astaedefol}
{\mathbb G} {\mathbb H}^{2} \, (A, \, k) \, \cong \, \Bigl(
\sqcup_{(\lambda, \Lambda)} \,\, {\rm H}^2_{(\lambda, \, \Lambda)}
\, (A, \, k) \Bigl) \, \sqcup \, k^*
\end{equation}
where the coproduct on the right hand side is in the category of
sets over all possible algebra maps $\lambda$, $\Lambda: A\to k$.
\end{corollary}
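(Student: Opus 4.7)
The plan is to derive this decomposition directly from Proposition 2.9, with essentially no new content beyond an organized bookkeeping of the equivalence relation $\approx_1$ on ${\mathcal C}{\mathcal F}_1(A)$. Proposition 2.9 already establishes a bijection
\[
{\mathbb G}{\mathbb H}^2(A,k) \, \cong \, \bigl({\mathcal C}{\mathcal F}_1(A)/\approx_1\bigr) \, \sqcup \, k^{*},
\]
so the task reduces to identifying $\bigl({\mathcal C}{\mathcal F}_1(A)/\approx_1\bigr)$ with the coproduct $\sqcup_{(\lambda,\Lambda)}\, {\rm H}^{2}_{(\lambda,\Lambda)}(A,k)$.

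The decisive observation is that, by the very definition of $\approx_1$ recorded in Proposition 2.9, two co-flag data $(\lambda,\Lambda,\vartheta)$ and $(\lambda',\Lambda',\vartheta')$ can only be equivalent if $\lambda=\lambda'$ and $\Lambda=\Lambda'$. Consequently the pair $(\lambda,\Lambda)\in {\rm Alg}(A,k)\times {\rm Alg}(A,k)$ is an invariant of the $\approx_1$-class, and we obtain a set-theoretic partition
\[
{\mathcal C}{\mathcal F}_1(A)/\approx_1 \;=\; \bigsqcup_{(\lambda,\Lambda)} \bigl\{\,\overline{(\lambda,\Lambda,\vartheta)} \ : \ (\lambda,\Lambda,\vartheta)\in {\mathcal C}{\mathcal F}_1(A)\,\bigr\}
\]
indexed over the pairs of unital algebra maps $(\lambda,\Lambda)$.

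For each such fixed pair, I would next unpack what it means for $(\lambda,\Lambda,\vartheta)$ to be a co-flag datum of the first kind: inspecting \equref{ciudat1} shows that the remaining condition on $\vartheta$ is exactly that it be a normalized $(\lambda,\Lambda)$-cocycle, i.e.\ an element of ${\rm Z}^2_{(\lambda,\Lambda)}(A,k)$. Moreover, restricted to triples sharing the same $(\lambda,\Lambda)$, the relation $\approx_1$ collapses to the requirement that there exist a linear map $t:A\to k$ with
\[
\vartheta(a,b) = \vartheta'(a,b) - t(ab) + \lambda(a)t(b) + \Lambda(b)t(a),
\]
which is precisely $\approx_1^{(\lambda,\Lambda)}$ as specified in \equref{ecfolo}. (Evaluating the cocycle identity with $b=1_A$ or $a=1_A$ forces $t(1_A)=0$ automatically, so the normalization condition $t(1_A)=0$ noted in the proof of Proposition 2.9 is redundant here.) Thus the $\approx_1$-class of $(\lambda,\Lambda,\vartheta)$ in the $(\lambda,\Lambda)$-component is canonically the class $\overline{\vartheta}\in {\rm H}^{2}_{(\lambda,\Lambda)}(A,k)$.

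Assembling these two steps yields the bijection
\[
{\mathcal C}{\mathcal F}_1(A)/\approx_1 \;\cong\; \bigsqcup_{(\lambda,\Lambda)} {\rm H}^{2}_{(\lambda,\Lambda)}(A,k),
\]
and plugging this back into the decomposition provided by Proposition 2.9 produces \equref{astaedefol}. There is no real obstacle: the proof is purely organizational, and the only point requiring a moment of care is checking that the $(\lambda,\Lambda)$-fibre of ${\mathcal C}{\mathcal F}_1(A)$ coincides with ${\rm Z}^{2}_{(\lambda,\Lambda)}(A,k)$, which is an immediate comparison of \equref{ciudat1} with the definition of ${\rm Z}^{2}_{(\lambda,\Lambda)}(A,k)$.
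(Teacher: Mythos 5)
Your proposal is correct and follows essentially the same route as the paper: the authors likewise start from \prref{hoccal1}, observe that $\approx_1$ forces $\lambda=\lambda'$ and $\Lambda=\Lambda'$ so that ${\mathcal C}{\mathcal F}_1(A)/\approx_1$ partitions over pairs of algebra maps, and identify each fibre with ${\rm Z}^2_{(\lambda,\Lambda)}(A,k)$ modulo $\approx_1^{(\lambda,\Lambda)}$. Your added remark that $t(1_A)=0$ is automatic from \equref{primaechiv} at $a=b=1_A$ is also the observation the paper records in the proof of \prref{hoccal1}.
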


The classifying object ${\mathbb G} {\mathbb H}^{2} \, (A, \, k)$
computed in \coref{hoccal1ab} classifies all Hochschild products
$A \star k$ up to an isomorphism of algebras which stabilizes $k$
and co-stabilizes $A$. In what follows we will consider a less
restrictive classification: we denote by ${\mathbb H} {\mathbb O}
{\mathbb C} \, (A, \, k)$ the set of algebra isomorphism classes
of all Hochschild products $A \star k$. Two cohomologous
Hochschild products $A \star k$ and $A \star' k$ are of course
isomorphic and therefore there exists a canonical projection
${\mathbb G} {\mathbb H}^{2} \, (A, \, k) \twoheadrightarrow
{\mathbb H} {\mathbb O} {\mathbb C} \, (A, \, k)$ between the two
classifying objects. Next we compute ${\mathbb H} {\mathbb O}
{\mathbb C} \, (A, \, k)$.

\begin{theorem} \thlabel{clasres}
Let $A$ be an algebra. Then there exists a bijection:
\begin{equation} \eqlabel{hoculmic}
{\mathbb H} {\mathbb O} {\mathbb C} \, (A, \, k) \cong \Bigl(
{\mathcal C} {\mathcal F}_1 \, (A)/ \equiv \Bigl) \, \sqcup \, \{
\, A\times k \, \}
\end{equation}
where $\equiv$ is the equivalence relation on ${\mathcal C}
{\mathcal F}_1 \, (A)$ defined by: $(\lambda, \Lambda, \vartheta)
\equiv (\lambda', \Lambda', \vartheta')$ if and only if there
exists a triple $(s_0, \, \psi, \, r) \in k^* \times {\rm
Aut}_{\rm Alg} (A) \times {\rm Hom}_k (A, \, k)$ consisting of a
non-zero scalar $s_0 \in k^*$, an algebra automorphism $\psi$ of
$A$ and a linear map $r: A \to k$ such that for any $a$, $b \in A$
we have:
\begin{eqnarray}
&& \lambda = \lambda' \circ \psi, \qquad \Lambda = \Lambda' \circ \psi \eqlabel{2015a} \\
&& \vartheta (a, b) \, s_0 = \vartheta' \bigl(\psi(a), \psi(b)
\bigl) + \lambda(a) r(b) + \Lambda(b) r(a) - r(ab) \eqlabel{2015b}
\end{eqnarray}
\end{theorem}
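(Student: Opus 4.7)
The plan is to extend \leref{HHH} by describing \emph{arbitrary} algebra isomorphisms between two Hochschild products of the first kind, $A_{(\lambda, \Lambda, \vartheta)}$ and $A_{(\lambda', \Lambda', \vartheta')}$, rather than only those that stabilize $k$ and co-stabilize $A$. By \coref{descefecti}, combined with the observation in \reref{nucleunul} that every $A^{(\lambda, u)}$ is algebra-isomorphic to $A \times k$, any Hochschild product $A \star k$ is isomorphic either to $A \times k$ (so that the entire family $\{A^{(\lambda, u)}\}_{(\lambda,u) \in \mathcal{C}\mathcal{F}_2(A)}$ collapses to a single class) or to some $A_{(\lambda, \Lambda, \vartheta)}$ with $(\lambda, \Lambda, \vartheta) \in \mathcal{C}\mathcal{F}_1(A)$. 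This defines the candidate map \equref{hoculmic} -- sending $\overline{(\lambda, \Lambda, \vartheta)}$ to $[A_{(\lambda, \Lambda, \vartheta)}]$ and the basepoint to $[A \times k]$ -- and it is automatically surjective; the substance of the proof is its well-definedness and injectivity.

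For the ``easy'' direction, given a triple $(s_0, \psi, r) \in k^* \times {\rm Aut}_{\rm Alg}(A) \times {\rm Hom}_k(A, k)$ witnessing $(\lambda, \Lambda, \vartheta) \equiv (\lambda', \Lambda', \vartheta')$, I would verify directly that $\Psi : A_{(\lambda, \Lambda, \vartheta)} \to A_{(\lambda', \Lambda', \vartheta')}$ given by $\Psi(a, x) := \bigl(\psi(a),\, r(a) + s_0 x\bigr)$ is an algebra isomorphism. Specializing \equref{2015b} at $a = b = 1_A$ forces $r(1_A) = 0$, whence $\Psi$ preserves the unit; multiplicativity is a straightforward expansion using \equref{2015a}--\equref{2015b}; and the inverse is built from the triple $(s_0^{-1},\, \psi^{-1},\, -s_0^{-1}\, r \circ \psi^{-1})$.

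Conversely, given an arbitrary algebra isomorphism $\Psi : A_{(\lambda, \Lambda, \vartheta)} \to A_{(\lambda', \Lambda', \vartheta')}$, write it in block form $\Psi(a, x) = \bigl(\psi(a) + x p,\; r(a) + s_0 x\bigr)$ for unique linear maps $\psi : A \to A$ and $r : A \to k$, and scalars $p \in A$, $s_0 \in k$. Unit preservation yields $\psi(1_A) = 1_A$ and $r(1_A) = 0$; injectivity of $\Psi$ forces $s_0 \neq 0$; and testing the multiplicativity identity on the generating pairs $\{(a, 0), (0, 1) : a \in A\}$ produces the system $p^2 = 0$, $\psi(a)\, p = \lambda(a)\, p$, $p\, \psi(a) = \Lambda(a)\, p$ and $\psi(a) \psi(b) = \psi(ab) + \vartheta(a, b)\, p$, together with the analogous second-component conditions. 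The crux, and main obstacle, is reducing to the case $p = 0$, whereupon the system collapses cleanly: $\psi$ becomes an algebra automorphism of $A$ with $\lambda = \lambda' \circ \psi$, $\Lambda = \Lambda' \circ \psi$, and $r$, $s_0$ satisfy the cocycle identity \equref{2015b}, witnessing exactly $(\lambda, \Lambda, \vartheta) \equiv (\lambda', \Lambda', \vartheta')$. This reduction will be achieved by precomposing $\Psi$ with an isomorphism of the form provided by \leref{HHH}, chosen so that the resulting composite has vanishing ``$p$-component''; this uses the freedom to pass between cohomologous Hochschild structures within a single $\equiv$-class, and is also what separates the two summands of \equref{hoculmic}.
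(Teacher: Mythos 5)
Your setup (reduction via \coref{descefecti} and \reref{nucleunul}, the block form $\Psi(a,x)=(\psi(a)+xp,\,r(a)+s_0x)$, and the easy direction) matches the paper's proof. But the step you yourself identify as the crux --- forcing $p=0$ --- is handled by a mechanism that cannot work. The isomorphisms supplied by \leref{HHH} stabilize $V$ and co-stabilize $A$, so they all have the form $\chi(a,x)=(a,\,t(a)+x)$, i.e.\ they act as the identity on the $A$-component. Composing $\Psi$ with such a $\chi$ on either side leaves the coefficient of $x$ in the first component of the composite equal to $p$: for instance $\Psi\circ\chi(a,x)=\bigl(\psi(a)+(t(a)+x)p,\,\ldots\bigr)$ still has $p$-component $p$, and likewise for $\chi\circ\Psi$. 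So no choice of cohomologous representative can produce a composite with vanishing $p$-component, and the claimed reduction fails. The vanishing of $p$ is not a normalization one is free to impose; it must be \emph{derived} from the multiplicativity identities you already wrote down. That is what the paper does: testing multiplicativity on the pair $(0,x)$, $(0,y)$ (where the product is zero on the source because $\vartheta$ is normalized and the kernel has null square in the first-kind algebras) together with the first-component identities $\psi(a)p=\lambda(a)p$, $p\psi(a)=\Lambda(a)p$, $p^2=0$ and bijectivity of $\Psi$ forces $\beta_0=p=0$, after which the system collapses exactly as you describe.

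A second, smaller omission: you never show that the two summands of \equref{hoculmic} are disjoint, i.e.\ that no $A_{(\lambda,\Lambda,\vartheta)}$ is isomorphic to $A\times k\cong A^{(\lambda',u')}$; the phrase that the reduction ``is also what separates the two summands'' does not substitute for an argument. The paper's step $(2)$ settles this by showing that for any algebra map $\varphi:A_{(\lambda,\Lambda,\vartheta)}\to A^{(\lambda',u')}$ the condition $\varphi\bigl((0,x)\star(0,y)\bigr)=\varphi(0,x)\star'\varphi(0,y)$ forces both $\beta_0=0$ and $s_0=0$ (since $(0,1)$ squares to zero on the source but $(0,1)\star'(0,1)=(0,u')\neq 0$ on the target), so $\varphi$ kills $(0,1)$ and is never injective. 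Both points need to be supplied before your argument establishes the bijection \equref{hoculmic}.
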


\begin{proof} \coref{descefecti} shows that any Hochschild product $A
\star k$ is isomorphic to $A_{(\lambda, \Lambda, \vartheta)}$, for
some $(\lambda, \Lambda, \vartheta) \in {\mathcal C} {\mathcal
F}_1 \, (A)$ or to $A^{(\lambda', u')}$, for some $(\lambda', u')
\in {\mathcal C} {\mathcal F}_2 \, (A)$. Since $A^{(\lambda', u')}
\cong A \times k$, the proof relies on the following two steps:

$(1)$ Let $(\lambda, \Lambda, \vartheta)$ and $(\lambda',
\Lambda', \vartheta') \in {\mathcal C} {\mathcal F}_1 \, (A)$.
Then, there exists a bijection between the set of all algebra
isomorphisms $\varphi: A_{(\lambda, \Lambda, \vartheta)} \to
A_{(\lambda', \Lambda', \vartheta')}$ and the set of all triples
$(s_0, \, \psi, \, r) \in k^* \times {\rm Aut}_{\rm Alg} (A)
\times {\rm Hom}_k (A, \, k)$ satisfying the compatibility
conditions \equref{2015a} and \equref{2015b}. The bijection is
given such that the algebra isomorphism $\varphi = \varphi_{(s_0,
\, \psi, \, r)}$ associated to $(s_0, \, \psi, \, r)$ is defined
for any $a\in A$ and $x\in k$ by:
\begin{equation}\eqlabel{morfisass}
\varphi_{(s_0, \, \psi, \, r)} (a, \, x) = \bigl(\psi(a), \, r(a)
+ x s_0 \bigl)
\end{equation}

$(2)$ The algebras $A_{(\lambda, \Lambda, \vartheta)}$ and
$A^{(\lambda', u')} \cong A \times k$ are not isomorphic.

We start by proving $(1)$; although this is more than we need for
proving our theorem, this more general statement will be used
later on in computing the automorphism groups for the algebras
$A_{(\lambda, \Lambda, \vartheta)}$. First we note that there
exists a bijection between the set of all linear maps $\varphi: A
\times k \to A \times k$ and the set of quadruples $(s_0, \,
\beta_0, \, \psi, \, r) \in k\times A \times {\rm Hom}_k (A, \, A)
\times {\rm Hom}_k (A, \, k)$ given such that the linear map
$\varphi = \varphi_{(s_0, \, \beta_0, \, \psi, \, r)}$ associated
to $(s_0, \, \beta_0, \, \psi, \, r)$ is given for any $a\in A$
and $x\in k$ by:
\begin{equation}\eqlabel{4morf}
\varphi (a, \, x) = \bigl( \psi(a) + x\, \beta_0, \, r(a) + x\,
s_0 \bigl)
\end{equation}
We will prove now the following technical fact: a linear map given
by \equref{4morf} is an isomorphism of algebras from $A_{(\lambda,
\Lambda, \vartheta)}$ to $A_{(\lambda', \Lambda', \vartheta')}$ if
and only if $\beta_0 = 0$, $s_0 \neq 0$, $\psi$ is an algebra
automorphism of $A$ and \equref{2015a}-\equref{2015b} hold. Taking
into account the multiplication on $A_{(\lambda, \Lambda,
\vartheta)}$ given by \equref{patratnul}, we can easily obtain
that $\varphi \bigl( (0, x) \star (0, y) \bigl) = \varphi (0, x)
\star' \varphi (0, x)$ if and only if $\beta_0 = 0$, where by
$\star'$ we denote the multiplication of $A_{(\lambda', \Lambda',
\vartheta')}$. Hence, in order for $\varphi$ to be an algebra map
it should take the following simplified form for any $a\in A$ and
$x\in k$:
\begin{equation}\eqlabel{4morfb}
\varphi (a, \, x) = \bigl( \psi(a), \, r(a) + x\, s_0 \bigl)
\end{equation}
for some triple $(s_0, \, \psi, \, r) \in k \times {\rm Hom}_k (A,
\, A) \times {\rm Hom}_k (A, \, k)$. Next we prove that a linear
map given by \equref{4morfb} is an algebra morphism from
$A_{(\lambda, \Lambda, \vartheta)}$ to $A_{(\lambda', \Lambda',
\vartheta')}$ if and only if $\psi: A \to A$ is an algebra map and
the following compatibilities are fulfilled for any $a$, $b\in A$:
\begin{eqnarray}
&& \lambda (a) \, s_0 = \lambda'\bigl(\psi (a) \bigl) \, s_0,
\qquad \Lambda (a) \, s_0 = \Lambda'\bigl(\psi (a) \bigl) \, s_0 \eqlabel{2015cc} \\
&& r(ab) + \vartheta (a, b) \, s_0 = \vartheta' \bigl(\psi(a),
\psi(b) \bigl) + \lambda'(\psi(a)) r(b) + \Lambda'(\psi(b) ) r(a)
\eqlabel{2015dd}
\end{eqnarray}
Indeed, $\varphi$ preserves the unit $(1_A, 0)$ if and only if
$\psi (1_A) = 1_A$ and $r(1_A) = 0$. On the other hand we can
prove that the first (resp. the second) compatibility of
\equref{2015cc} is exactly the condition $\varphi \bigl( (a, 0)
\star (0, x) \bigl) = \varphi (a, 0) \star' \varphi (0, x)$ (resp.
$\varphi \bigl( (0, x) \star (a, 0) \bigl) = \varphi (0, x) \star'
\varphi (a, 0)$). Finally, the condition $\varphi \bigl( (a, 0)
\star (b, 0) \bigl) = \varphi (a, 0) \star' \varphi (b, 0)$ is
equivalent to the fact that $\psi$ is an algebra endomorphism of
$A$ and \equref{2015dd} holds. Finally, the condition $r(1_A) = 0$
follows by taking $a = b = 1_A$ in \equref{2015dd}. Step $(1)$ is
finished if we prove that an algebra map $\varphi = \varphi_{(s_0,
\, \psi, \, r)}$ given by \equref{4morfb} is bijective if and only
if $s_0 \neq 0$ and $\psi$ is an automorphism of $A$. Assume first
that $s_0 \neq 0$ and $\psi$ is bijective with the inverse
$\psi^{-1}$. Then, we can see that $\varphi_{(s_0, \, \psi, \,
r)}$ is an isomorphism of algebras with the inverse given by
$\varphi^{-1}_{(s_0, \, \psi, \, r)} := \varphi_{(s_0^{-1}, \,
\psi^{-1}, \, -(r\circ \psi^{-1}) s_0^{-1})}$. Conversely, assume
that $\varphi$ is bijective. Then its inverse $\varphi^{-1}$ is an
algebra map and thus has the form $\varphi^{-1} (a, \, x) = (\psi'
(a), \, r' (a) + x s_0')$, for some triple $(s_0', r', \psi')$. If
we write $\varphi^{-1} \circ \varphi (0, \, 1) = (0, \, 1)$ we
obtain that $s_0 s_0' = 1$ i.e. $s_0$ is invertible in $k$. In the
same way $\varphi^{-1} \circ \varphi (a, \, 0) = (a, \, 0) =
\varphi \circ \varphi^{-1} (a, \, 0)$ gives that $\psi$ is
bijective and $\psi' = \psi^{-1}$.

We will prove now the assertion from step $(2)$. Assume that
$\varphi : A_{(\lambda, \Lambda, \vartheta)} \to A^{(\lambda',
u')}$ is an algebra map. Thus, $\varphi$ is given by
\equref{4morf}, for some quadruple $(s_0, \, \beta_0, \, \psi, \,
r)$. Now, we can see that the algebra map condition $\varphi
\bigl( (0, x) \star (0, y) \bigl) = \varphi (0, x) \star' \varphi
(0, y)$ holds if and only if $\beta_0 = 0$ and $s_0 = 0$, where
$\star'$ denotes the multiplication on the algebra $A^{(\lambda',
u')}$. Hence, $\varphi$ takes the form $\varphi (a, \, x) = \bigl(
\psi(a), \, r(a) \bigl)$, for all $a\in A$ and $x\in k$. Such a
map is never an isomorphism of algebras since is not injective and
thus two algebras of the form $A_{(\lambda, \Lambda, \vartheta)}$
and $A^{(\lambda', u')}$ are never isomorphic. The theorem is now
completely proved.
\end{proof}

\begin{remark} \relabel{homoth}
The compatibility condition \equref{2015b} of \thref{clasres}
highlights the difficulty of classifying co-flag algebras over a
given algebra $A$: it generalizes the classical
Kroneker-Williamson equivalence of bilinear forms whose
classification was started in \cite{will} and finished in
\cite{horn} over algebraically closed fields. We recall that two
bilinear forms $\vartheta$ and $\vartheta'$ on a vector space $A$
are called \emph{isometric} if there exists a linear automorphism
$\psi \in {\rm Aut}_k (A)$ such that $\vartheta (x, y) =
\vartheta'( \psi (x), \, \psi(y))$, for all $x$, $y\in A$. If the
cocycles $\vartheta$ and $\vartheta'$ are isometric as bilinear
forms on $A$ and $\psi$ is an algebra automorphism of $A$, then
\equref{2015b} holds by taking $s_0 := 1$ and $r := 0$, the
trivial map. For future references to the problem of classifying
bilinear forms up to an isometry we refer to \cite{horn}.
\end{remark}

\thref{clasres} can be applied to classify all semidirect products
of algebras of the form $A \# k$. We recall from \exref{abelian}
that a semidirect product $A \# k$ is just a Hochschild product
$A_{(\lambda, \Lambda, \vartheta)} = A\star k$ having a trivial
cocycle. The algebra obtained in this way will be denoted by
$A_{(\lambda, \, \Lambda)}$. Directly from the proof of
\thref{clasres} we obtain:

\begin{corollary}\colabel{cassemidirect}
Let $A$ be an algebra, $(\lambda, \, \Lambda)$ and $(\lambda', \,
\Lambda')$ two pairs consisting of algebra maps from $A$ to $k$.
Then there exists an isomorphism of algebras $A_{(\lambda, \,
\Lambda)} \cong A_{(\lambda', \, \Lambda')}$ if and only if there
exists $\psi \in {\rm Aut}_{\rm Alg} (A)$ such that $\lambda =
\lambda' \circ \psi$ and $\Lambda = \Lambda' \circ \psi$.
\end{corollary}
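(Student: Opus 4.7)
The plan is to derive this corollary as a direct specialization of \thref{clasres}, where both Hochschild cocycles are taken to be trivial. The idea is that the bijection established in step $(1)$ of the proof of \thref{clasres} between algebra isomorphisms $\varphi : A_{(\lambda,\Lambda,\vartheta)} \to A_{(\lambda',\Lambda',\vartheta')}$ and triples $(s_0, \psi, r) \in k^* \times {\rm Aut}_{\rm Alg}(A) \times {\rm Hom}_k(A, k)$ satisfying \equref{2015a} and \equref{2015b} does not require the cocycles to be nonzero; hence it applies verbatim to the semidirect case by substituting $\vartheta = \vartheta' = 0$.

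For the implication ``$\Rightarrow$'', I would simply invoke \thref{clasres} directly: an algebra isomorphism $A_{(\lambda, \Lambda)} \cong A_{(\lambda', \Lambda')}$ produces a triple $(s_0, \psi, r)$, and the first condition \equref{2015a} gives precisely $\lambda = \lambda' \circ \psi$ and $\Lambda = \Lambda' \circ \psi$ with $\psi \in {\rm Aut}_{\rm Alg}(A)$.

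For the implication ``$\Leftarrow$'', given $\psi \in {\rm Aut}_{\rm Alg}(A)$ with $\lambda = \lambda' \circ \psi$ and $\Lambda = \Lambda' \circ \psi$, I would exhibit the required triple explicitly by taking $s_0 := 1 \in k^*$ and $r : A \to k$ the trivial map. Then \equref{2015a} is satisfied by hypothesis, while \equref{2015b} reduces, for $\vartheta = \vartheta' = 0$, to the identity $0 = 0$, which is trivially true. Consequently the map $\varphi_{(1, \psi, 0)} : A_{(\lambda, \Lambda)} \to A_{(\lambda', \Lambda')}$ defined by $\varphi(a, x) = (\psi(a), x)$ is an algebra isomorphism.

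There is no genuine obstacle in this argument since all the hard work has already been carried out inside the proof of \thref{clasres}; the only thing to verify is that the choices $s_0 = 1$ and $r = 0$ make the cocycle compatibility \equref{2015b} collapse to a tautology when $\vartheta$ and $\vartheta'$ both vanish, which is immediate.
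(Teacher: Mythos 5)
Your proposal is correct and follows essentially the same route as the paper, which derives \coref{cassemidirect} directly from step $(1)$ of the proof of \thref{clasres}; your only addition is to spell out the verification that the triple $(1, \psi, 0)$ satisfies \equref{2015b} when $\vartheta = \vartheta' = 0$, which is indeed immediate.
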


An interesting special case occurs for the algebras $A$ such that
there is no algebra map $A \to k$ (e.g. the classical Weyl algebra
$W_1 (k) = k < \, x, \, y \, | \, xy - yx = 1 \, >$ or the matrix
algebra ${\rm M}_n (k)$, for $n\geq 2$). Using \prref{hoccal1} and
\thref{clasres} we obtain:

\begin{corollary}\colabel{casspecial}
Let $A$ be an algebra for which there is no algebra map $A \to k$.
Then
$$
{\mathbb G} {\mathbb H}^{2} \, (A, \, k) \cong k^*, \qquad
{\mathbb H} {\mathbb O} {\mathbb C} \, (A, \, k) = \{ \, A\times k
\, \}
$$
In particular, up to an isomorphism, the only algebra $B$ for
which there exists a surjective algebra map $B \to A$ having a
$1$-dimensional kernel is the direct product $A \times k$.
\end{corollary}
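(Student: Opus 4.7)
The plan is to derive the corollary directly from Proposition~\ref{pr:hoccal1}, Theorem~\ref{th:clasres} and Corollary~\ref{co:descefecti}, by observing that the hypothesis forces one of the two families of co-flag data to be empty.

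First I would recall from Definition~\ref{de:coflag} that a co-flag datum of the first kind $(\lambda, \Lambda, \vartheta) \in {\mathcal C}{\mathcal F}_1(A)$ involves, as part of the data, two algebra maps $\lambda, \Lambda : A \to k$. Hence if ${\rm Alg}(A, k) = \emptyset$, then ${\mathcal C}{\mathcal F}_1(A) = \emptyset$, while the set ${\mathcal C}{\mathcal F}_2(A)$ of co-flag data of the second kind is unaffected, since its elements $(\lambda, u)$ require only a unital linear map $\lambda : A \to k$ (which always exists, e.g.\ by extending $1_A \mapsto 1$ to an arbitrary linear functional) together with $u \in k^*$.

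Next I would invoke Proposition~\ref{pr:hoccal1} which yields
\[
{\mathbb G}{\mathbb H}^{2}(A, k) \;\cong\; \bigl({\mathcal C}{\mathcal F}_1(A)/\!\approx_1\bigl) \,\sqcup\, k^*.
\]
Since the first component is empty under our hypothesis, we obtain ${\mathbb G}{\mathbb H}^{2}(A, k) \cong k^*$, which is the first claim. For the second claim, Theorem~\ref{th:clasres} gives
\[
{\mathbb H}{\mathbb O}{\mathbb C}(A, k) \;\cong\; \bigl({\mathcal C}{\mathcal F}_1(A)/\!\equiv\bigl) \,\sqcup\, \{\, A \times k \,\},
\]
so again the emptiness of ${\mathcal C}{\mathcal F}_1(A)$ collapses the right-hand side to the singleton $\{A \times k\}$.

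Finally, the concluding assertion follows from Corollary~\ref{co:descefecti}: any algebra $B$ admitting a surjective algebra map onto $A$ with $1$-dimensional kernel is isomorphic either to some $A_{(\lambda, \Lambda, \vartheta)}$ with $(\lambda, \Lambda, \vartheta) \in {\mathcal C}{\mathcal F}_1(A)$ or to $A \times k$, and the first alternative is ruled out by hypothesis. There is no genuine obstacle in the argument; the statement is essentially a direct translation of the previously established classification results under the vanishing condition ${\rm Alg}(A, k) = \emptyset$.
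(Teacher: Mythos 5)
Your proposal is correct and follows exactly the paper's own route: the paper derives this corollary directly from Proposition~\ref{pr:hoccal1} and Theorem~\ref{th:clasres}, with the key observation (which you make explicit) that the absence of algebra maps $A \to k$ forces ${\mathcal C}{\mathcal F}_1(A) = \emptyset$ while leaving the $k^*$-component intact. Your additional remarks on why ${\mathcal C}{\mathcal F}_2(A)$ survives and the appeal to Corollary~\ref{co:descefecti} for the final assertion are accurate elaborations of the same argument.
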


Determining the automorphism group of a given algebra is an old
and very difficult problem, intensively studied in invariant
theory (see \cite{cek} and their references). As already
mentioned, the first step proved in the proof of \thref{clasres}
allows us to compute the automorphism group ${\rm Aut}_{\rm Alg}
\, (A_{(\lambda, \Lambda, \vartheta)})$, for any $(\lambda,
\Lambda, \vartheta) \in {\mathcal C} {\mathcal F}_1 \, (A)$. Let
$k^*$ be the units group of $k$, $k^* \times {\rm Aut}_{\rm Alg}
(A)$ the direct product of groups and $(A^*, +)$ the underlying
abelian group of the linear dual $A^* = {\rm Hom}_k (A, \, k)$.
Then the map given for any $s_0 \in k^*$, $\psi \in {\rm Aut}_{\rm
Alg} (A)$ and $r\in A^*$ by:
$$
\zeta: k^* \times {\rm Aut}_{\rm Alg} (A) \to {\rm Aut}_{\rm Gr}
\, (A^*, +), \qquad \zeta (s_0, \, \psi) \, (r) := s_0^{-1} \, r
\circ \psi
$$
is a morphism of groups. Thus, we can construct the semidirect
product of groups $A^* \, \ltimes_{\zeta} \bigl(k^* \times {\rm
Aut}_{\rm Alg} (A) \bigl)$ associated to $\zeta$. The next result
shows that ${\rm Aut}_{\rm Alg} (A_{(\lambda, \Lambda, \vartheta)}
)$ is isomorphic to a certain subgroup of the semidirect product
$A^* \, \ltimes_{\zeta} \bigl(k^* \times {\rm Aut}_{\rm Alg} (A)
\bigl)$.

\begin{corollary} \colabel{izoaut}
Let $A$ be an algebra, $(\lambda, \Lambda, \vartheta) \in
{\mathcal C} {\mathcal F}_1 \, (A)$ a co-flag datum of the first
kind of $A$ and let ${\mathcal G} \bigl(A, \, (\lambda, \Lambda,
\vartheta) \bigl)$ be the set of all triples $(s_0, \, \psi, \, r)
\in k^* \times {\rm Aut}_{\rm Alg} (A) \times A^*$ such that for
any $a$, $b\in A$:
$$
\lambda = \lambda \circ \psi, \quad \Lambda = \Lambda \circ \psi,
\quad \vartheta (a, b) \, s_0 = \vartheta \bigl(\psi(a), \psi(b)
\bigl) + \lambda(a) r(b) + \Lambda(b) r(a) - r(ab)
$$
Then, there exists an isomorphism of groups ${\rm Aut}_{\rm Alg} (
A_{(\lambda, \Lambda, \vartheta)} ) \cong {\mathcal G} \,
\bigl(A, \, (\lambda, \Lambda, \vartheta) \bigl)$, where
${\mathcal G} \,  \bigl(A, \, (\lambda, \Lambda, \vartheta)
\bigl)$ is a group with respect to the following multiplication:
\begin{equation} \eqlabel{graut}
(s_0, \, \psi, \, r) \cdot (s_0', \, \psi', \, r') := (s_0 s_0',
\, \psi\circ \psi', \, r \circ \psi' + s_0 r' )
\end{equation}
for all $(s_0, \, \psi, \, r)$, $(s_0', \, \psi', \, r') \in \in
{\mathcal G} \, \bigl(A, \, (\lambda, \Lambda, \vartheta) \bigl)$.
Moreover, the canonical map
$$
{\mathcal G} \, \bigl(A, \, (\lambda, \Lambda, \vartheta) \bigl)
\longrightarrow A^* \, \ltimes_{\zeta} \bigl(k^* \times {\rm
Aut}_{\rm Alg} (A) \bigl), \qquad (s_0, \, \psi, \, r) \mapsto
\bigl (s_0^{-1} r, \, (s_0, \, \psi) \bigl)
$$
in an injective morphism of groups.
\end{corollary}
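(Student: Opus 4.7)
The strategy is to upgrade the bijective correspondence from step $(1)$ of the proof of \thref{clasres} to a group isomorphism by specializing $(\lambda', \Lambda', \vartheta') = (\lambda, \Lambda, \vartheta)$, and then to transport the group operation from $ {\rm Aut}_{\rm Alg}$ via explicit composition of the associated automorphisms.

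More precisely, I would first invoke step $(1)$ of the proof of \thref{clasres} to obtain a set-theoretic bijection between ${\rm Aut}_{\rm Alg}(A_{(\lambda, \Lambda, \vartheta)})$ and ${\mathcal G}(A, (\lambda, \Lambda, \vartheta))$ via $(s_0, \psi, r) \mapsto \varphi_{(s_0, \psi, r)}$ as given by \equref{morfisass}. The defining conditions of ${\mathcal G}$ are precisely \equref{2015a} and \equref{2015b} read off in the case $(\lambda', \Lambda', \vartheta') = (\lambda, \Lambda, \vartheta)$, so this part is immediate.

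Next, I would carry out a direct computation of $\varphi_{(s_0, \psi, r)} \circ \varphi_{(s_0', \psi', r')}$ using \equref{morfisass}: applying the second automorphism to $(a, x)$ yields $(\psi'(a),\, r'(a) + s_0' x)$, and then applying the first gives $((\psi \circ \psi')(a),\, r(\psi'(a)) + s_0 r'(a) + s_0 s_0' x)$, which is precisely $\varphi_{(s_0 s_0',\, \psi \circ \psi',\, r \circ \psi' + s_0 r')}$. This transports the group structure of ${\rm Aut}_{\rm Alg}(A_{(\lambda, \Lambda, \vartheta)})$ to the operation \equref{graut} on ${\mathcal G}$, so ${\mathcal G}$ automatically becomes a group with identity $(1_k, {\rm Id}_A, 0)$ and the bijection becomes an isomorphism of groups. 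The inverse formula $(s_0^{-1},\, \psi^{-1},\, -s_0^{-1} r \circ \psi^{-1})$ can be read off from the inverse of $\varphi_{(s_0, \psi, r)}$ already exhibited in the proof of \thref{clasres}.

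Finally, for the embedding into $A^* \ltimes_{\zeta} (k^* \times {\rm Aut}_{\rm Alg}(A))$, I would first verify that $\zeta$ is indeed a morphism of groups with the chosen convention, and then check directly from \equref{graut} that the assignment $(s_0, \psi, r) \mapsto (s_0^{-1} r,\, (s_0, \psi))$ respects multiplication; injectivity is immediate since both $(s_0, \psi)$ and $r = s_0 \cdot (s_0^{-1} r)$ can be recovered from the image. The only genuinely delicate point in the whole argument is matching conventions for the semidirect product so that the $s_0^{-1}$ twist on the $A^*$ component cleanly absorbs the asymmetric term $r \circ \psi' + s_0 r'$ appearing in \equref{graut}; everything else reduces either to direct substitution or to an invocation of the bijection proved in \thref{clasres}.
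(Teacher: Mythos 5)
Your proposal is correct and follows essentially the same route as the paper: both invoke step $(1)$ of the proof of \thref{clasres} to get the bijection, transport the group structure via the composition identity $\varphi_{(s_0, \psi, r)} \circ \varphi_{(s_0', \psi', r')} = \varphi_{(s_0 s_0', \, \psi \circ \psi', \, r\circ \psi' + s_0 r')}$, and verify the embedding into the semidirect product by direct computation. Your write-up actually spells out the composition and the convention-matching for $\zeta$ more explicitly than the paper, which leaves these as routine checks.
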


\begin{proof} The fact that ${\mathcal G} \,  \bigl(A, \, (\lambda, \Lambda, \vartheta)
\bigl)$ is a group with respect to the multiplication
\equref{graut} follows by a straightforward computation which is
left to the reader: the unit is $(1, \, {\rm Id}_A, \, 0)$ and the
inverse of $(s_0, \, \psi, \, r)$ is $(s_0^{-1}, \, \psi^{-1}, \,
- s_0^{-1} (r\circ \psi^{-1}))$. The first statement follows from
the proof of \thref{clasres}, step $(1)$, since $\varphi_{(s_0,
\psi, r)} \circ \varphi_{(s_0', \psi', r')} = \varphi_{(s_0 s_0',
\, \psi \circ \psi', \, r\circ \psi' + s_0 r')}$, where
$\varphi_{(s_0, \psi, r)}$ is an automorphism of $A_{(\lambda,
\Lambda, \vartheta)}$ given by \equref{morfisass}. The last
assertion follows by a routine computation.
\end{proof}

Now we shall provide some explicit examples. The first example
shows the limitations of the classical approach for the extension
problem: there is no $(1 + n^2)$-dimensional algebra with an
algebra projection on the matrix algebra ${\rm M}_n(k)$ whose
kernel is a null square ideal, but there exists a family of $(1 +
n^2)$-dimensional algebras which project on the matrix algebra
${\rm M}_n(k)$. We denote by $\{e_{ij} \, | \, i, j = 1, \cdots, n
\}$ the canonical basis of ${\rm M}_n(k)$, i.e. $e_{ij}$ is the
matrix having $1$ in the $(i, j)^{th}$ position and zeros
elsewhere while $\delta^k_j$ and $\delta^{(i, j)}_{(n, n)}$ denote
the Kroneker symbols.

\begin{example}\exlabel{matrici}
Let $n \geq 2$ be a positive integer. Then, ${\mathbb G} {\mathbb
H}^{2} \, ({\rm M}_n(k), \, k) \cong  k^* $ and the equivalence
classes of all $(1 + n^2)$-dimensional algebras with an algebra
projection on ${\rm M}_n(k)$ are the following algebras denoted by
${\rm M}_n(k)^u$ and defined for any $u\in k^*$ as the vector
space having $\{f, \, e_{ij} \, | \, i, j = 1, \cdots, n \}$ as a
basis and the multiplication given for any $i$, $j = 1, \cdots, n$
by:
$$
f^2 := u \, f, \quad e_{ij} \star f = f \star e_{ij} :=
\delta^{(i, j)}_{(n, n)} \, f, \quad e_{ij} \star e_{kl} :=
\delta^j_k \, e_{il} + u^{-1} \bigl( \delta^{(i, j)}_{(n, n)}
\delta^{(k, l)}_{(n, n)} - \delta^k_j \, \delta^{(i, l)}_{(n,
n)}\bigl) \, f
$$
Furthermore, ${\mathbb H} {\mathbb O} {\mathbb C} \, ( {\rm
M}_n(k), \, k) = \{ {\rm M}_n(k) \times k \}$.

The result follows by applying \prref{hoccal1} and
\coref{casspecial} since there is no unitary algebra map ${\rm
M}_n (k) \to k$. If we consider $\lambda^0 : {\rm M}_n(k) \to k$
defined by $\lambda^0 (e_{ij}) := \delta^{(i, j)}_{(n, n)}$, for
all $i$, $j = 1, \cdots, n$, then $\{ (\lambda^0, \, u) \, | \, u
\in k^* \}$ is a system of representatives for the equivalence
relation $\approx_2$. The algebra ${\rm M}_n(k)^u$ associated to
the pair $(\lambda^0, \, u)$ is the vector space having $\{f, \,
e_{ij} \, | \, i, j = 1, \cdots, n \}$ as a basis while the
multiplication given by \equref{doiaalg} comes down to the one in
the statement.
\end{example}

An interesting example through the subtle arithmetics involved in
the classification of the corresponding Hochschild products is the
group algebra $k[C_n]$, where for a positive integer $n \geq 2$ we
denote by $C_n$ the cyclic group of order $n$ generated by $d$. We
introduce the following notation: for any $i$, $j = 1, \cdots, n -
1$ we shall denote by $i \ast j$ the positive integer given by
$$
i \ast j := \left\{
\begin{array}{lcl} i+j & \mbox{if}& j+i < n\\ i+j-n & \mbox{if}& j+i \geq n
\end{array}\right.
$$
In what follows $U_{n}(k) := \{\omega \in k ~|~ \omega^{n} = 1\}$
denotes the cyclic group of $n$-th roots of unity in $k$ and
$\mathcal{A}(n, \, k) := \{x \in U (k[C_{n}]) ~|~ \psi: k[C_{n}]
\to k[C_{n}], \, \psi(d^{i}) = x^{i}, \,\, i = 0, 1, \cdots, n-1,
\,\, {\rm is \,\, an \,\, algebra\,\, automorphism} \}$.

\begin{example}\exlabel{grupala}
Let $k$ be a field such that $n$ is invertible in $k$. Then:
$$
{\mathbb G} {\mathbb H}^{2} \, (k[C_n], \, k) \cong \bigl(U_{n}(k)
\times U_{n}(k)\bigl) \, \sqcup \, k^*
$$
and the equivalence classes of $(n + 1)$-dimensional algebras with
an algebra projection on $k[C_n]$ are the families of algebras
having $\{f, \, d^{i} ~|~ i = 1, \cdots,  n\}$ as a basis over $k$
and the multiplication $\star$ defined for any $(\alpha, \, \beta)
\in U_{n}(k) \times U_{n}(k)$, $u \in k^{*}$ and $i$, $j = 1,
\cdots, n$ by:
\begin{eqnarray*}
k[C_{n}]_{(\alpha, \, \beta)}:&& \,\,\,d^{i} \star d^{j} =
d^{i+j}, \,\,\,\,\, f^{2} = 0, \,\,\,\,\, d^{i} \star f =
\alpha^{i} \, f,
\,\,\,\,\, f\star d^{i} = \beta^{i} \, f\\
k[C_{n}]^{u}:&& \,\,\, d^{i} \star d^{j} = d^{i+j} +
u^{-1}(\delta^n_i \, \delta^n_j - \delta^n_{i+j})f, \,\,\,\,\,
f^{2} = uf, \,\,\,\,\, d^{i} \star f = f \star d^{i} =
\delta^n_i\,f
\end{eqnarray*}
Furthermore, there exists a bijection
$$
{\mathbb H}{\mathbb O} {\mathbb C} \, ( k[C_n], \, k) \cong
\bigl(U_{n}(k) \times U_{n}(k)/\equiv \bigl) \, \sqcup \, \{k[C_n]
\times k\}
$$
where $\equiv$ is the following equivalence relation on $U_{n}(k)
\times U_{n}(k)$: two pairs $(\alpha, \, \beta)$, $(\alpha ', \,
\beta ')$ of $n$-th roots of unity in $k$ are equivalent $(\alpha,
\, \beta) \equiv (\alpha ', \, \beta ')$ if and only if there
exists $x_{0} + x_{1} \, d + \cdots + x_{n-1} \, d^{n-1} \in
\mathcal{A}(n, \, k)$ such that
\begin{eqnarray} \eqlabel{bizaaar}
\alpha ' = x_{0} + x_{1} \, \alpha + \cdots + x_{n-1} \,
\alpha^{n-1}, \qquad \beta ' = x_{0} + x_{1} \, \beta + \cdots +
x_{n-1} \, \beta^{n-1}
\end{eqnarray}

To start with we point out that the algebra maps $k[C_{n}] \to k$
are parameterized by the cyclic group of $n$-th roots of unity in
$k$. Consider $\alpha$, $\beta \in U_{n}(k)$ which implement
$\lambda$ and respectively $\Lambda$, i.e. $\lambda(d) = \alpha$
and $\Lambda(d) = \beta$. We are left to compute the set of all
$(\lambda, \, \Lambda)$-cocycles. To this end we denote $\vartheta
(d^{i}, \, d) := \xi_{i}$, $i = 1, \cdots, n-1$ and we will see
that these elements will allow us to completely determine the
cocycle $\vartheta: k[C_n] \times k[C_n] \to k$. Indeed, by
writing down \equref{ciudat1} for triples of the form $(d^{i}, \,
d^{j}, \, d)$ and using induction we obtain the following general
formula:
$$
\vartheta(d^{i},\, d^{j}) = \sum_{k=0}^{j-1}\, \xi_{i \ast k} \,
\beta^{j-1-k} - \bigl(\sum_{l=1}^{j-1} \, \xi_{l} \,
\beta^{j-1-l}\bigl) \alpha^{i}
$$
for all $i$, $j = 1, \cdots, n-1$, where $\xi_{0} := 0$.
Furthermore, by writing down \equref{ciudat1} for triples of the
form $(d^{i}, \, d^{n-i}, \, d^{i})$ and using the above formula
for $\vartheta$ we obtain the following compatibility which needs
to be fulfilled for any $i = 1, \cdots, n-1$:
$$
(\alpha^{i} - \beta^{i}) (\xi_{n-1} + \xi_{n-2} \, \beta + ... +
\xi_{1} \, \beta^{n-2}) = 0
$$
Therefore we distinguish two cases, namely: $\alpha = \beta$ or
$\alpha \neq \beta$ and $\xi_{n-1} + \xi_{n-2} \, \beta + ... +
\xi_{1} \, \beta^{n-2} = 0$. It follows that ${\mathcal C}
{\mathcal F}_1 \, (k[C_{n}]) \cong (U_{n}(k) \times k^{n-1}) \cup
\{(\alpha, \, \beta, \, \xi_{1}, \, \xi_{2}, \, ..., \, \xi_{n-2})
\in U_{n}(k) \times U_{n}(k) \times k^{n-2} ~|~ \alpha \neq
\beta\}$ and the bijection associates to any $(\alpha, \,
\delta_{1}, \, \delta_{2}, \, ..., \, \delta_{n-1}) \in U_{n}(k)
\times k^{n-1}$ the co-flag datum of the first kind
$(\lambda_{\alpha},\, \Lambda_{\alpha},\, \vartheta_{\delta})$
given for all $i$, $j = 1, \cdots, n-1$ by:
$$
\lambda(d) = \Lambda(d) := \alpha,\,\,\,
\vartheta_{\delta}(d^{i},\, d^{j}) := \sum_{k=0}^{j-1}\, \delta_{i
\ast k} \, \alpha^{j-1-k} - \bigl(\sum_{l=1}^{j-1} \, \delta_{l}
\, \alpha^{i+j-1-l}\bigl)
$$
where $\delta_{0} = 0$, and to any $(\beta,\, \gamma,\, \xi_{1},
\, \xi_{2}, \, ...,\, \xi_{n-2}) \in U_{n}(k) \times U_{n}(k)
\times k^{n-2}$, with $\beta \neq \gamma$, associates the co-flag
datum of the first kind $(\overline{\lambda}_{\beta},\,
\overline{\Lambda}_{\gamma},\, \overline{\vartheta}_{\xi})$ given
for any $i$, $j = 1, \cdots, n-1$ by:
$$
\overline{\lambda}_{\beta}(d) := \beta,\,\,\,
\overline{\Lambda}_{\gamma}(d) := \gamma, \,\,\,
\overline{\vartheta}_{\xi}(d^{i},\, d^{j}) := \sum_{k=0}^{j-1}\,
\xi_{i \ast k} \, \gamma^{j-1-k} - \bigl(\sum_{l=1}^{j-1} \,
\xi_{l} \, \gamma^{j-1-l}\bigl) \beta^{i}
$$
where $\xi_{0} = 0$ and $\xi_{n-1} = - \bigl(\xi_{n-2} \, \beta
+\, ...\, + \xi_{1}\, \beta^{n-2}\bigl)$. It is now obvious that a
$(\lambda_{\alpha},\, \Lambda_{\alpha})$-cocycle is never
equivalent to a $(\overline{\lambda}_{\beta},\,
\overline{\Lambda}_{\gamma})$-cocycle for any $\alpha$, $\beta$,
$\gamma \in U_{n}(k)$, $\beta \neq \gamma$. Furthermore, by a
rather long but straightforward computation it can be easily seen
that for all $\alpha \in U_{n}(k)$, any $(\lambda_{\alpha},\,
\Lambda_{\alpha})$-cocycle, say $\vartheta_{\delta}$, is
equivalent (in the sense of \equref{ecfolo}) to the trivial
cocycle via the linear map $t: k[C_{n}] \to k$ defined by $t(1) :=
0$ and for any $i = 2, \cdots, n-1$:
\begin{eqnarray*}
t(d) := n^{-1} \sum_{j=1}^{n-1} \alpha^{j} \delta_{n-j}, \quad
t(d^{i}) := n^{-1} i \alpha^{i-1} \sum_{j=1}^{n-1} \alpha^{j}
\delta_{n-j} - \sum_{j=0}^{i-2} \alpha^{j} \delta_{i-1-j}
\end{eqnarray*}
Therefore we have $\sqcup_{(\lambda_{\alpha}, \Lambda_{\alpha})}
\,\, {\rm H}^2_{(\lambda_{\alpha}, \, \Lambda_{\alpha})} \,
(k[C_{n}], \, k) \cong \{(\alpha, \, \alpha) ~|~ \alpha \in
U_{n}(k)\}$. A similar statement holds for the second family of
co-flag data of the first kind: for all $\beta$, $\gamma \in
U_{n}(k)$, with $\beta \neq \gamma$, any
$(\overline{\lambda}_{\beta},\,
\overline{\Lambda}_{\gamma})$-cocycle, say
$\overline{\vartheta}_{\xi}$, is equivalent to the trivial cocycle
via the linear map $\overline{t}: k[C_{n}] \to k$ defined by
$\overline{t}(1) := \overline{t}(d) := 0$ and $\overline{t}(d^{i})
:= - \sum_{j=0}^{i-2} \xi_{i-1-j}\, \gamma^{j}$, for all $i = 2,
\cdots, n-1$. Thus $\sqcup_{(\overline{\lambda}_{\beta},
\overline{\Lambda}_{\gamma})} \,\, {\rm
H}^2_{(\overline{\lambda}_{\beta}, \,
\overline{\Lambda}_{\gamma})} \, (k[C_{n}], \, k) \cong \{(\alpha,
\, \beta) \in U_{n}(k) \times U_{n}(k) ~|~ \alpha \neq \beta\}$.

Therefore, we have proved that $\Bigl({\mathcal C} {\mathcal F}_1
\, (k[C_n])/ \approx_1 \Bigl) \, \cong \, U_{n}(k) \times
U_{n}(k)$ and the corresponding co-flag algebras are those denoted
by $k[C_{n}]_{(\alpha, \, \beta)}$. For the co-flag data of the
second kind of $k[C_n]$ we choose the set of representatives
$\{(\lambda^{0}, u)~|~ u \in k^{*}\}$ for the equivalence relation
$\approx_{2}$, where $\lambda^{0}: k[C_{n}] \to k$ is given by
$\lambda^{0}(d^{i}) = \delta^n_i$, for all $i = 1, \cdots, n$.
They give rise to the co-flag algebras denoted by $k[C_{n}]^{u}$.
The conclusion now follows from \coref{hoccal1ab}. Finally, the
assertion regarding ${\mathbb H} {\mathbb O} {\mathbb C} \, (
k[C_n], \, k)$ is an easy consequence of \coref{cassemidirect}.
\end{example}

\begin{remark} \relabel{angel}
\exref{grupala} shows that any Hochschild product $k[C_n] \star k$
is isomorphic either to the direct product $k[C_n] \times k$, or
to a semi-direct product $k[C_n]_{(\alpha, \, \beta)}$,
parameterized by the group $U_{n}(k) \times U_{n}(k)$. The
explicit description of the isomorphism classes of the algebras
$k[C_n]_{(\alpha, \, \beta)}$ indicated by the equivalence
relation \equref{bizaaar} is a difficult number theory problem
which relies heavily on the arithmetics of the positive integer
$n$ as well as on the base field $k$. Furthermore, the problem is
also related to other two intensively studied problems in the
theory of group algebras, namely the description of all invertible
elements and the automorphism group of a group algebra \cite{jan,
mili, oli}. Indeed, the key set $\mathcal{A}(n, \, k)$ which
appears in the description of the classifying object ${\mathbb
H}{\mathbb O} {\mathbb C} \, ( k[C_n], \, k)$ parameterizes in
fact the automorphism group ${\rm Aut}_{\rm Alg} (k[C_n])$. Any
element of $\mathcal{A}(n, \, k)$ is invertible in $k[C_n]$ and
has order $n$ in the group $U (k[C_n])$. These elements depend
essentially on $n$ and the base field $k$. Indeed, let $X^n - 1 =
f_1 f_2 \cdots f_t$ be the decomposition of $X^n -1$ as a product
of irreducible polynomials in $k[X]$. If we denote by
$\varepsilon_i$ a root of $f_i$ in a fixed algebraic closure of
$k$, we obtain that there exists a canonical isomorphism of
$k$-algebras $k[C_n] \cong k(\varepsilon_1) \times \cdots \times
k(\varepsilon_t)$ that maps the generator $d$ of $C_n$ to
$(\varepsilon_1, \cdots, \varepsilon_t)$. Thus, ${\rm Aut}_{\rm
Alg} (k[C_n])$ is isomorphic to a direct product between all
wreath product of ${\rm Aut} \bigl(k(\varepsilon_i)\bigl)$ and the
symmetric groups \cite{oli}.
\end{remark}

Applying \exref{grupala} for $n = 2$, we obtain the classification
of all $3$-dimensional algebras with an algebra projection on
$k[C_2] \cong k\times k$.

\begin{example} \exlabel{grup2}
If $k$ a field of characteristic $\neq 2$, then:
\begin{eqnarray}
{\mathbb G} {\mathbb H}^{2} \, (k[C_2], \, k) &\cong & \bigl(
\{\pm
1\} \times \{\pm 1\} \bigl) \, \sqcup \, k^* \\
{\mathbb H}{\mathbb O} {\mathbb C} \, ( k[C_2], \, k) &\cong &
\{k^3, \,\, k[X, Y]/ (X^2 -1, \, Y^2, \, XY - Y), \,\, A_{21} \}
\eqlabel{coflag322}
\end{eqnarray}
where $A_{21}$ is the $3$-dimensional non-commutative algebra
having $\{1, d, f\}$ as a basis and the multiplication given by
$d^2 = 1$, $f^2 = 0$, $df = - fd = f$.
\end{example}

Now we highlight the efficiency of our methods in order to
classify co-flag algebras of a given dimension. If $k$ is a field
of characteristic $\neq 2$, then, up to an isomorphism, there
exists only two co-flag algebras of dimension $2$: the algebras
$k[X]/(X^2)$ and $k[C_2] \cong k \times k$ \cite[Corollary
4.5]{am-2014}. If $k\neq k^2$, we mention that the other family of
$2$-dimensional algebras, namely the quadratic field extension
$k(\sqrt{d})$, for some $d \in k \setminus k^2$ does not contain
co-flag algebras since there is no algebra map $k(\sqrt{d}) \to
k$. The co-flag algebras over $k[C_2]$ are classified by
\equref{coflag322} and thus, in order to classify all
$3$-dimensional co-flag algebras we need to classify the co-flag
algebras over $k[X]/(X^2)$.

\begin{example}\exlabel{unexem}
Let $A := k[X]/(X^2)$. Then ${\mathbb G} {\mathbb H}^{2} \,
(k[X]/(X^2), \, k) \cong k \, \sqcup \, k^*$ and the equivalence
classes of $3$-dimensional algebras that have an algebra
projection on $k[X]/(X^2)$ are two families of algebras defined
for any $a\in k$ and $u\in k^*$ as follows:
$$
A_a := k[X, \, Y] / (X^2 - a\,Y, \, Y^2, \, XY), \qquad A^u :=
k[X, \, Y] / (X^2, \, Y^2 - u\, Y, \, XY)
$$
Furthermore, ${\mathbb H} {\mathbb O} {\mathbb C} \, ( k[X]/(X^2),
\, k) = \{ A_0, \, A_1, \, A^1 \}$, i.e. up to an isomorphism
there exist three co-flag algebras of dimension $3$ over
$k[X]/(X^2)$.

Indeed, $A$ is the $2$-dimensional algebra having $1$ and $x$ as a
basis and $x^2 = 0$. Thus $A$ has only one algebra map $A \to k$,
namely the one sending $x$ to $0$. Hence, there exists a bijection
${\mathcal C} {\mathcal F}_1 \, (A) \cong k$ such that the co-flag
datum of the first kind $(\lambda, \Lambda, \vartheta)$ associated
to $a \in k$ is given by
$$
\vartheta (x, x) := a, \quad \lambda(x) = \Lambda(x) = \vartheta
(1, x) = \vartheta (x, 1) = \vartheta (1, 1) := 0
$$
We can easily see that the equivalence relation $\approx_1$ of
\prref{hoccal1} becomes equality, i.e. $a \approx_1 a'$ if and
only if $a = a'$ and hence ${\mathcal C} {\mathcal F}_1 \,
(A)/\approx_1  \, \cong \, k$. The families of algebras associated
to such a co-flag datum of the first kind as defined by
\equref{primaalg} are the $3$-dimensional algebras having $\{ f,
\, 1, \, x \}$ as a basis and the multiplication given by: $x\star
x = a f$, $f^2 = x\star f = f\star x = 0$, which is the algebra
$A_a$. For the last part we apply \prref{hoccal1} which proves
that ${\mathcal C} {\mathcal F}_2 \, (A)/ \approx_2 \, \cong k^*$:
the algebra $A^u$, for all $u\in k^*$, is precisely the algebra
defined by \equref{doiaalg} associated to the co-flag datum of the
second king $(\lambda^0, u)$, where $\lambda^0$ is the linear map
given by $\lambda^0 (x) := 0$, $\lambda^0 (1) := 1$. The last
statement follows from \thref{clasres} or it can be proved
directly as follows: we observe that, for any $u\in k^*$, there
exists and isomorphism of algebras $A^u \cong A^1 = k[X, \, Y] /
(X^2, \, Y^2 - Y, \, XY)$. On the other hand, there exists an
isomorphism of algebras $A_a \cong A_1 = k[X, \, Y] / (X^2 - Y, \,
Y^2, \, XY)$, for all $a\in k^*$ and any two algebras $A_0$, $A_1$
and $A^1$ are not isomorphic to each other.
\end{example}

To conclude, using \exref{grup2} and \exref{unexem} we obtain:

\begin{corollary} \colabel{clasfinal}
If $k$ is a field of characteristic $\neq 2$ then, up to an
isomorphism, there exist exactly six $3$-dimensional co-flag
algebras namely:
\begin{eqnarray*}
&& k^3, \quad k[X, Y]/(X^2 -1, Y^2, XY - Y), \quad k < x, \, y \, | \, x^2 = 1, \, y^2 = 0, \, xy = -yx = y > \\
&& k[X, \, Y] / (X^2, \, Y^2, \, XY), \,\,\, k[X, \, Y] / (X^2 -
Y, \, Y^2, \, XY), \,\,\, k[X, \, Y] / (X^2, \, Y^2 - Y, \, XY)
\end{eqnarray*}
\end{corollary}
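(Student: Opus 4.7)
The plan is to reduce the classification of $3$-dimensional co-flag algebras to the two cases already settled in the preceding examples, namely those with a chain descending first through $k[C_2]$ and those with a chain descending first through $k[X]/(X^2)$.

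By \deref{coflaglbz} any $3$-dimensional co-flag algebra $E$ over $k$ admits a chain $E \twoheadrightarrow A_1 \twoheadrightarrow k$ of surjective algebra maps with $1$-dimensional kernels at each step. In particular $A_1$ itself is a $2$-dimensional co-flag algebra. The paragraph immediately preceding \exref{unexem} recalls, citing \cite[Corollary 4.5]{am-2014}, that over a field of characteristic $\neq 2$ the only $2$-dimensional co-flag algebras are $k[X]/(X^2)$ and $k[C_2] \cong k\times k$; the remaining $2$-dimensional unital algebras are quadratic field extensions $k(\sqrt{d})$ with $d \in k \setminus k^2$, which admit no unital algebra map to $k$ and hence cannot play the role of $A_1$. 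Consequently every $3$-dimensional co-flag algebra $E$ must arise as a $3$-dimensional co-flag algebra over one of $k[C_2]$ or $k[X]/(X^2)$.

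The second step is to read the answer off from the two earlier computations. \exref{grup2} produces the three isomorphism classes of $3$-dimensional Hochschild products $k[C_2] \star k$, namely $k^3$, the commutative algebra $k[X,Y]/(X^2-1, Y^2, XY-Y)$ and the non-commutative algebra $A_{21}$ with defining relations $x^2 = 1$, $y^2 = 0$, $xy = -yx = y$. \exref{unexem} produces the three isomorphism classes of $3$-dimensional Hochschild products $k[X]/(X^2) \star k$, namely $A_0 = k[X,Y]/(X^2, Y^2, XY)$, $A_1 = k[X,Y]/(X^2-Y, Y^2, XY)$ and $A^1 = k[X,Y]/(X^2, Y^2-Y, XY)$. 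Taking the union yields the list of six algebras in the statement.

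The main obstacle is to verify that the six candidates from the two lists are pairwise non-isomorphic, since a single $3$-dimensional algebra could a priori admit surjections onto both $k[C_2]$ and $k[X]/(X^2)$ and so appear in both lists. I would separate them using a cascade of standard invariants: commutativity isolates the non-commutative $A_{21}$; among the commutative algebras, $k^3$ is the unique semisimple one (three primitive idempotents), $A_0$ is local with a square-zero radical of dimension $2$, and $A_1 \cong k[X]/(X^3)$ is local with a radical of non-zero square. The most delicate comparison involves $A^1$ and $k[X,Y]/(X^2-1, Y^2, XY-Y)$, both of which split (via an explicit idempotent decomposition $e = Y$, respectively $e = (1+X)/2$, which uses $\textnormal{char}(k)\neq 2$ in the second case) as a product of $k$ with a local $2$-dimensional algebra; distinguishing these two requires analyzing their idempotent decompositions carefully, and this is the step that needs to be executed with the most attention.
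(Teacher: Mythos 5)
Your reduction is exactly the route the paper takes: the middle term of a chain $E \twoheadrightarrow A_1 \twoheadrightarrow k$ is a $2$-dimensional co-flag algebra, hence $k[C_2]$ or $k[X]/(X^2)$, and the lists of \exref{grup2} and \exref{unexem} are then combined. The genuine content you add --- and correctly flag as the main obstacle --- is the verification that the six candidates are pairwise non-isomorphic, and that is precisely the step that fails. The two algebras you single out for ``the most delicate comparison'' are in fact isomorphic: both are isomorphic to $k[\epsilon]/(\epsilon^2) \times k$. Indeed, in $B := k[X,Y]/(X^2-1,\, Y^2,\, XY-Y)$ the element $e := (1+X)/2$ is an idempotent (this uses ${\rm char}(k)\neq 2$) with $eX = e$ and $eY = Y$, so the Peirce decomposition gives $B \cong eB \times (1-e)B$ with $eB = ke \oplus kY \cong k[\epsilon]/(\epsilon^2)$ and $(1-e)B = k(1-e) \cong k$; on the other hand, in $A^1 = k[X,Y]/(X^2,\, Y^2-Y,\, XY)$ the idempotent $Y$ gives $A^1 \cong (1-Y)A^1 \times YA^1$ with $(1-Y)A^1 = k(1-Y)\oplus kX \cong k[\epsilon]/(\epsilon^2)$ and $YA^1 \cong k$. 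A direct isomorphism $B \to k[\epsilon]/(\epsilon^2)\times k$ is given by $1 \mapsto (1,1)$, $X \mapsto (1,-1)$, $Y \mapsto (\epsilon, 0)$.

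The conceptual reason for the overlap is that $k[\epsilon]/(\epsilon^2)\times k$ admits two different algebra surjections with $1$-dimensional kernel: onto $k[C_2]\cong k\times k$ (killing the radical) and onto $k[X]/(X^2)$ (killing the second factor), so it legitimately appears both in ${\mathbb H}{\mathbb O}{\mathbb C}\,(k[C_2],\,k)$ and in ${\mathbb H}{\mathbb O}{\mathbb C}\,(k[X]/(X^2),\,k)$. Consequently the union of the two lists contains only five distinct isomorphism classes, namely $k^3$, $k[\epsilon]/(\epsilon^2)\times k$, the non-commutative algebra $A_{21}$, $k[X,Y]/(X^2,Y^2,XY)$ and $k[X]/(X^3)$; no cascade of invariants can separate the second and the sixth entries of the statement because they name the same algebra. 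Your strategy therefore cannot be completed to a proof of ``exactly six'': carried out honestly, it proves ``exactly five'', with entries $2$ and $6$ of the list identified. (The paper's own one-line proof simply concatenates the two examples and omits the overlap check altogether, so the issue you isolated is real and not an artifact of your approach.)
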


In particular, if $k := \CC$ the field of complex numbers,
\coref{clasfinal} shows that only 6 out of the 22 types of
algebras of dimension $3$ are co-flag algebras. Moreover, we also
highlight the efficiency of \thref{clasres} in classifying co-flag
algebras by turning the problem into a purely computational one
using a recursive method: if we consider $A$ to be each of the
algebras from \coref{clasfinal} and using the results of this
section we will arrive at the classification of $4$-dimensional
co-flag algebras. Of course the difficulty of the computations
increases along side with the dimension.

Very interesting and completely different from ${\rm M}_n(k)$ is
the case when $A := \mathcal{T}_{n}(k)$ is the algebra of upper
triangular matrices, i.e. $\mathcal{T}_{n}(k)$ is the subalgebra
of ${\rm M}_n(k)$ having $B := \{e_{ij} ~|~ i,\,j = 1,\, 2,\,
\cdots,\, n,\, i\leq j\}$ as the canonical basis over $k$. In
order to write down the classifying object ${\mathbb G} {\mathbb
H}^{2} \, (\mathcal{T}_{n}(k), \, k)$ we introduce the following
three sets of matrices of trace zero, defined for any $u$, $v$, $w
= 1, 2, \cdots, n$ by:
\begin{eqnarray*}
M^{u} &:=& \{A = (a_{ij}) \in {\rm M}_n(k) ~|~ a_{ii} = 0, \,\,
{\rm for \,\, all} \,\, i = 1, \cdots, n \,\, {\rm and} \,\,
a_{ur} = 0, \,\, {\rm for \, all}
\,\, u < r\} \\
M^{v,w} &:=& \{A = (a_{ij}) \in {\rm M}_n(k) ~|~ \sum_{i=1}^{n}
a_{ii} = 0,\,\, a_{vv} = 0, \,\,
{\rm and} \,\, a_{ws} = 0, \,\, {\rm for \, all} \,\, w \leq s\} \\
\overline{M}^{v,w} &:=& \{A = (a_{ij}) \in {\rm M}_n(k) ~|~
\sum_{i=1}^{n} a_{ii} = 0,\,\, a_{vv} = 0, \,\, {\rm and} \,\,
a_{ws} = 0, \,\, {\rm for \, all} \,\, w \leq s \neq v\}
\end{eqnarray*}

\begin{example}\exlabel{Anamatrici}
Let $k$ be a field of characteristic zero. Then:
$$
{\mathbb G} {\mathbb H}^{2} \, (\mathcal{T}_{n}(k), \, k) \cong
\bigl( \bigcup_{u \in \{1, 2, \cdots, n\}} M^{u} \, \bigl) \,
\sqcup \, \bigl( \bigcup_{v, w \in \{1, 2, \cdots, n\}, \, v<w}
M^{v,w} \times k^{n-w} \, \bigl) \, \sqcup \, U \sqcup k^{*}
$$
where we denote: $U := \bigcup_{v, w \in \{1, 2, \cdots, n\},\,
v>w} \,\, \overline{M}^{v,w} \times k^{n-w} \times k^{v-w-1}$. The
equivalence classes of $\bigl(\frac{n(n+1)}{2} +
1\bigl)$-dimensional algebras that have an algebra projection on
$\mathcal{T}_{n}(k)$ are the families of algebras having $\{f, \,
e_{ij} ~|~ i, \, j = 1, 2, \cdots, n, \, i \leq j\}$ as a basis
over $k$ and the multiplication given below (we only write down
the non-zero products):
\begin{eqnarray*}
& \mathcal{T}_{n}(k)^{u}_{A} : & \quad e_{it} \star e_{ts} =
e_{il} - \alpha_{is} f, \,\, e_{uj} \star e_{jl} = e_{ul},\,\, e_{uu} \star e_{kl} =
\alpha_{kl} f,\,\, e_{ij} \star e_{uu} = \alpha_{ij} f,\\
&& \quad e_{ij} \star e_{jj} = e_{ij} - \alpha_{ij} f, \,\, e_{iu}
\star e_{uu} = e_{iu}, \,\, e_{uu} \star f = f \star e_{uu} = f,\\
&& \quad {\rm where} \,u \in \{1, 2, \cdots, n\}, \, A = (\alpha_{pq})_{p, q =
\overline{1,n}} \in M^{u}, \, i, j \neq u, \, t\neq s;\\
& \mathcal{T}_{n}(k)^{v,w}_{B,\Gamma}: & \quad e_{ip} \star e_{pl}
= e_{il} - \beta_{il}f, \,\, e_{wi} \star e_{il} = e_{wl} -
\beta_{l} f,\,\, e_{ww}
\star e_{kl} = e_{wl} \delta^{w}_{k} + \beta_{kl} f,\\
&& \quad  e_{it} \star e_{vv} = \beta_{it} f,\,\, e_{ij} \star e_{jj} =
e_{ij} - \beta_{ij}(1-\delta^{v}_{j})f, \,\, e_{ws} \star e_{vv} = - \gamma_{s} f,\\
&& \quad e_{wj} \star e_{jj} = e_{wj} +
\gamma_{j}(1-\delta^{w}_{j})f, \,\, e_{vv} \star f = e_{ww} \star f = f,\\
&& \quad {\rm where} \, v, w \in \{1, 2, \cdots, n\}, \, v<w, \, B
= (\beta_{pq})_{p, q = \overline{1,n}} \in M^{v,w},\\
&& \quad \Gamma = (\gamma_{r})_{w<r},\, i \neq w, \, p \neq l, \,
i \neq l,\, t \neq v, \, s
\notin \{v,w\};\\
\end{eqnarray*}
\begin{eqnarray*}
& \mathcal{T}_{n}(k)^{w,v}_{C,\Psi,\Omega} : & \quad e_{ip} \star
e_{pl} = e_{il} - \delta_{il} f, \, e_{wi} \star e_{ij} = e_{wj} +
\psi_{j} f, \, e_{wt} \star e_{tv} = e_{wv} + \omega_{t} f,\\
&& \quad e_{ww} \star e_{kl} = e_{wl} \delta_{k}^{w} +
\delta_{kl}f, \, e_{ij} \star e_{jj} = e_{ij} - \delta_{ij} f,\,
e_{wj} \star e_{vv} = - \psi_{j} f\\
&& \quad e_{iv} \star e_{vv} = e_{iv}, \, e_{vv} \star f = f \star
e_{ww} = f, \,{\rm where} \,  v, w \in \{1, 2, \cdots, n\},\\
&& \quad v>w, \, C = (\delta_{pq})_{p, q = \overline{1,n}} \in
\overline{M}^{v,w},\, \Psi = (\psi_{r})_{w<r}, \, \Omega =
(\omega_{s})_{w<s<v},\\
&& \quad i \neq w,\, j \neq v,\, p \neq l, \, t \notin \{v, w\};\\
& {}^{\lambda}\mathcal{T}_{n}(k) : & \quad e_{ij} \star e_{jl} =
e_{il} + \lambda^{-1} \delta^{n}_{i} \delta^{n}_{l}
(\delta^{n}_{j} - 1)f, \, e_{nn} \star f = f \star e_{nn} = f, \,
f^{2} = \lambda f,\\
&& \quad {\rm where} \, \lambda \in k^{*}.
\end{eqnarray*}
Indeed, we start by discussing the algebra maps $\lambda:
\mathcal{T}_{n}(k) \to k$. Denote $\lambda(e_{ij}) = \alpha_{ij}
\in k$, for all $e_{ij} \in B$. Since $e_{ii}^{2} = e_{ii}$ we
have $\alpha_{ii} \in \{0,\, 1\}$, for all $i = 1, \cdots, n$.
Moreover, since ${\rm char} (k) = 0$ and we assume $\lambda$ to be
unitary it follows that $\Sigma_{i=1}^{n} \alpha_{ii} = 1$.
Therefore, $\alpha_{uu} = 1$, for some $u \in \{1, \, \cdots, n
\}$ and $\alpha_{ii} = 0$, for all $i \neq u$; we denote by
$\lambda^{u}$ this algebra map. As $e_{ij} = e_{ii} e_{ij}$, for
all $i \leq j$, we obtain that $\alpha_{ij} = 0$, for all $i \neq
u$ and $i \leq j$. Finally, since $e_{uj} e_{uu} = 0$ and
$\lambda^{u}(e_{uu}) = 1$, we obtain that $\alpha_{uj} = 0$, for
any $u < j$. To conclude, the set of algebra maps
$\mathcal{T}_{n}(k) \to k$ are in bijection to the set $\{1,
\cdots, n\}$ and the algebra map corresponding to some $j \in \{1,
\cdots, n\}$ is given by $\lambda^{j} (e_{jj}) := 1$ and
$\lambda^{j} (e_{uv}) := 0$, for all $(u, v) \neq (j, j)$. The
next step of the proof is a computational one: namely, for any
$u$, $v \in \{1, \, 2,\, \cdots,\, n\}$ we are left to compute the
set of all $(\lambda^{v},\, \Lambda^{u})$-cocycles $\vartheta$.
This is achieved by straightforward but lengthy checking of
\equref{ciudat1} which in this case comes down to the following
compatibility condition:
\begin{equation}\eqlabel{suptr}
\vartheta(e_{ij},\, e_{rs}e_{pq}) - \vartheta(e_{ij} e_{rs},\,
e_{pq}) = \vartheta(e_{ij},\, e_{rs}) \Lambda^{u} -
\vartheta(e_{rs},\,e_{pq}) \lambda^{v}
\end{equation}
with $i \leq j$, $r \leq s$ and $p \leq q$. Rather than including
here the cumbersome computations we will just point out the main
steps taken; the detailed proof can be provided upon request.
First, notice that since \equref{suptr} is not symmetric with
respect to the maps $\lambda^{v}$ and $\Lambda^{u}$ we distinguish
three cases, namely: $u=v$, $u<v$ and respectively $u>v$. For the
case $u=v$ the $(\lambda^{u},\, \Lambda^{u})$-cocycles obtained
are implemented by a family of $(n-u)$ scalars and a matrix of
trace zero with zeros on the line $u$ strictly above the diagonal.
Then, it can be proved that any such cocycle is equivalent (in the
sense of \equref{ecfolo}) to a cocycle implemented by a matrix in
$M^{u}$. The corresponding co-flag algebras are those denoted by
$\mathcal{T}_{n}(k)^{u}_{A}$. If $u<v$ then any $(\lambda^{v},\,
\Lambda^{u})$-cocycle is equivalent to a cocycle implemented by
$(n-v)$ scalars and a matrix in $M^{u,v}$. The corresponding
co-flag algebras are those denoted by
$\mathcal{T}_{n}(k)^{u,v}_{B,\Gamma}$. Finally, $u>v$ then any
$(\lambda^{v},\, \Lambda^{u})$-cocycle is equivalent to a cocycle
implemented by two families of $(n - v)$ and respectively $(u - v
- 1)$ scalars and a matrix in $\overline{M}^{u,v}$. The
corresponding co-flag algebras are those denoted by
$\mathcal{T}_{n}(k)^{v,u}_{C,\Psi,\Omega}$. Finally, the last
family of co-flag algebras, denoted by
${}^{\lambda}\mathcal{T}_{n}(k)$, corresponds to a co-flag datum
of the second kind associated $(\delta^{n}, \, \lambda)$, where
$\delta^{n}(i) = \delta^{n}_{i}$ is the Kronecker symbol and
$\lambda \in k^{*}$.

Moreover, for $n = 2$ we can also write down in a transparent way
the other classifying object, namely ${\mathbb H} {\mathbb O}
{\mathbb C} \, (\mathcal{T}_{2}(k), \, k)$. By a long but
straightforward computation it can easily be seen that ${\mathbb
H} {\mathbb O} {\mathbb C} \, (\mathcal{T}_{2}(k), \, k)$ contains
the algebras whose multiplication is depicted below together with
the direct product of algebras $\mathcal{T}_{2}(k) \times k$:
\begin{center}
\begin{tabular} {c | c  c  c  c  }
$\star$ & $e_{11}$ & $e_{12}$ & $e_{22}$ & $f$\\
\hline $e_{11}$ & $e_{11}$ & $e_{12}$ & 0 & $f$\\
$e_{12}$ & 0 & 0 & $e_{12}$ & 0 \\
$e_{22}$ & 0 & 0 & $e_{22}$ & 0\\
$f$ & $f$ & 0 & 0 & 0 \\
\end{tabular} \,\,\,
\begin{tabular} {c | c  c  c  c  }
$\star$ & $e_{11}$ & $e_{12}$ & $e_{22}$ & $f$\\
\hline $e_{11}$ & $e_{11}$ & $e_{12}$ & 0 & 0\\
$e_{12}$ & 0 & 0 & $e_{12}$ & 0 \\
$e_{22}$ & 0 & 0 & $e_{22}$ & $f$\\
$f$ & 0 & 0 & $f$ & 0 \\
\end{tabular}
\,\,\,
\begin{tabular} {c | c  c  c  c  }
$\star$ & $e_{11}$ & $e_{12}$ & $e_{22}$ & $f$\\
\hline $e_{11}$ & $e_{11}-f$ & $e_{12}$ & $f$ & 0\\
$e_{12}$ & 0 & 0 & $e_{12}$ & 0 \\
$e_{22}$ & $-f$ & 0 & $e_{22}-f$ & $f$\\
$f$ & 0 & 0 & $f$ & 0 \\
\end{tabular}
\end{center}

\begin{center}
\begin{tabular} {c | c  c  c  c  }
$\star$ & $e_{11}$ & $e_{12}$ & $e_{22}$ & $f$\\
\hline $e_{11}$ & $e_{11}$ & $e_{12}$ & 0 & $f$\\
$e_{12}$ & 0 & 0 & $e_{12}$ & 0 \\
$e_{22}$ & 0 & 0 & $e_{22}$ & 0\\
$f$ & 0 & 0 & $f$ & 0 \\
\end{tabular} \,\,\,
\begin{tabular} {c | c  c  c  c  }
$\star$ & $e_{11}$ & $e_{12}$ & $e_{22}$ & $f$\\
\hline $e_{11}$ & $e_{11}$ & $e_{12}-f$ & 0 & $f$\\
$e_{12}$ & $f$ & 0 & $e_{12}-f$ & 0 \\
$e_{22}$ & 0 & $f$ & $e_{22}$ & 0\\
$f$ & 0 & 0 & $f$ & 0 \\
\end{tabular}
\,\,\,
\begin{tabular} {c | c  c  c  c  }
$\star$ & $e_{11}$ & $e_{12}$ & $e_{22}$ & $f$\\
\hline $e_{11}$ & $e_{11}$ & $e_{12}$ & 0 & 0\\
$e_{12}$ & 0 & 0 & $e_{12}$ & 0 \\
$e_{22}$ & 0 & 0 & $e_{22}$ & $f$\\
$f$ & $f$ & 0 & 0 & 0 \\
\end{tabular}
\end{center}
\begin{center}
\begin{tabular} {c | c  c  c  c  }
$\star$ & $e_{11}$ & $e_{12}$ & $e_{22}$ & $f$\\
\hline $e_{11}$ & $e_{11}$ & $e_{12}+f$ & 0 & 0\\
$e_{12}$ & 0 & 0 & $e_{12}$ & 0 \\
$e_{22}$ & 0 & 0 & $e_{22}$ & $f$\\
$f$ & $f$ & 0 & 0 & 0 \\
\end{tabular}
\end{center}
\end{example}

\section{Applications to coalgebras and Poisson algebras} \selabel{aplicatii}
In this section we shall present two applications of our results
to the theory of coalgebras and respectively Poisson algebras, the
latter being the algebraic counterpart of Poisson manifolds. Our
strategy is to use two different contravariant functors which have
both the category of algebras as a codomain, namely the linear
dual functor $(-)^* := {\rm Hom}_k \, (-, \, k)$ and respectively
${\rm Fun} \, (-) := C^{\infty} (-)$ the real smooth functions on
a Poisson manifold functor.

\subsection*{Supersolvable coalgebras}
We recall that a coalgebra $C = (C, \, \Delta, \, \varepsilon)$ is
a vector space $C$ equipped with a comultiplication $\Delta : C
\to C \otimes C$ and a counit $\epsilon : C \to k$ such that
$(\Delta \ot {\rm Id}) \circ \Delta = ({\rm Id} \ot \Delta) \circ
\Delta$ and $({\rm Id} \ot \varepsilon) \circ \Delta =
(\varepsilon \ot {\rm Id}) \circ \Delta = {\rm Id}$, where $\ot =
\ot_k$ and ${\rm Id}$ is the identity map on $C$. We use the
$\Sigma$-notation for comultiplication: $\Delta (c) = c_{(1)} \ot
c_{(2)}$, for all $c\in C$ (summation understood). The base field
$k$, with the obvious structures, is the final object in the
category of coalgebras. We only provide some basic information of
coalgebras, referring the reader to \cite{BW} for more detail. If
$C = (C, \, \Delta, \, \varepsilon)$ is a colgebra, then the
linear dual $C^* = {\rm Hom}_k (C, \, k)$ is an associative
algebra under the convolution product $(f\ast g) (c) : = f
(c_{(1)}) g(c_{(2)})$, for all $f$, $g\in C^*$ and $c\in C$ having
the unit $1_{C^*} = \varepsilon$. Conversely, if $A$ is a finite
dimensional algebra with a basis $\{e_i \, | \, i = 1, \cdots, n
\}$, then the linear dual $A^*$ is a coalgebra with the
comultiplication and the counit given for any $i = 1, \cdots, n$
by:
\begin{equation}\eqlabel{coalg}
\Delta_{A^*} (e_i^*) := \sum_{j, l = 1}^n \, e_i^* (e_j e_l) \,
e_j^* \ot e_l^*, \qquad \varepsilon_{A^*} (e_i^*) := e_i^*(1_A)
\end{equation}
where $\{e_i^* \, | \, i = 1, \cdots, n \}$ is the dual basis of
$\{e_i \, | \, i = 1, \cdots, n \}$. The contravariant functor
$(-)^* := {\rm Hom}_k \, (-, \, k)$ gives a duality between the
category of all finite dimensional coalgebras and the category of
finite dimensional algebras \cite{BW}. Having the supersolvable
Lie algebras \cite{barnes} as a source of inspiration we introduce
the following concept:

\begin{definition} \delabel{superre}
A coalgebra $C$ is called \emph{supersolvable} if there exists a
positive integer $n$ such that $C$ has a finite chain of
subcoalgebras
\begin{equation} \eqlabel{sir}
k \cong C_1 \subset C_2 \subset \cdots \subset C_n := C
\end{equation}
such that $C_i$ has codimension $1$ in $C_{i+1}$, for all $i = 1,
\cdots, n-1$.
\end{definition}

Any supersolvable coalgebra is finite dimensional and the
definition is equivalent to the fact that ${\rm dim} (C_i) = i$,
for all $i = 1, \cdots, n$. Furthermore, since any supersolvable
coalgebra $C$ contains a $1$-dimensional subcolagebra, we obtain
that $G(C) \neq \emptyset$, where $G(C)$ is the space of
group-like elements of $C$. Using the duality given by the functor
$(-)^*$ it follows that a finite dimensional coalgebra $C$ is
supersolvable if and only if its convolution algebra $C^*$ is a
co-flag algebra in the sense of \deref{coflaglbz} and vice-versa:
a finite dimensional algebra $A$ is a co-flag algebra if and only
if its dual $A^*$ is a supersolvable coalgebra. Thus, the results
obtained in the previous section can be applied for the
classification problem of supersolvable coalgebras of a given
dimension.\footnote{The classification of solvable Lie algebras
\cite{gra2}, over arbitrary fields, was achieved up to dimension
$4$.} Using the above facts, \coref{clasfinal} classifies in fact
all $3$-dimensional supersolvable coalgebras: the isomorphism
classes are the duals $A^*$ of the algebras listed in the
statement. For example, the dual coalgebra associated to the
noncommutative algebra $k < x, \, y \, | \, x^2 = 1, \, y^2 = 0,
\, xy = -yx = y
>$ is the non-cocommutative supersolvable coalgebra having $\{f_1,
\, f_2, \, f_3\}$ as a basis and the comultiplication and the
counit given by:
\begin{eqnarray*}
&& \Delta (f_1) := f_1 \ot f_1 + f_2 \ot f_2, \quad \Delta (f_2)
:= f_1 \ot f_2 + f_2\ot f_1, \quad \, \varepsilon (f_1) := 1, \quad \varepsilon(f_2) := 0 \\
&& \Delta (f_3) := f_1 \ot f_3 + f_3 \ot f_1 + f_2 \ot f_3 - f_3
\ot f_2, \qquad \varepsilon (f_3) := 0
\end{eqnarray*}
The coalgebra obtained in this way is indeed supersolvable by
choosing $C_1 := k (f_1 - f_2)$ (we observe that $f_1 \pm f_2$ is
a group-like element) and $C_2 := k f_1 + k f_2$ as the
intermediary coalgebras of dimension $1$ respectively $2$ in the
sequence \equref{sir}.

Another application of our theory of \seref{glalg} is the
following: let $C$ be a finite dimensional coalgebra, $n$ a
positive integer and $V := k^n$. Then by taking the convolution
algebra $A : = C^*$ we obtain that the object ${\mathbb G}
{\mathbb H}^{2} \, (C^*, \, k^n)$ classifies, up to an isomorphism
which stabilizes $C$ and costabilizes $k^n$, all coalgebras which
contain $C$ as a subcoalgebra of codimension $n$. Moreover, a
coalgebra $D$ contains $C$ as a subcoalgebra of codimension $n$ if
and only if $D$ is isomorphic to the dual coalgebra $(C^* \star \,
k^n)^*$ of a Hochschild product $C^* \star \, k^n$ between the
convolution algebra $C^*$ and the vector space $k^n$. The formula
for the comultiplication of any such coalgebra $(C^* \star \,
k^n)^*$ can be written down effectively by using \equref{coalg}
and \equref{hoproduct2}. This observation shows that the
GE-problem applied for finite dimensional algebras and finite
dimensional vector spaces gives the answer at the level of finite
dimensional coalgebras to what we have called the \emph{extending
structures problem} studied in \cite{am-2015b, am-2013, am-2014}
for Jacobi, Lie and respectively associative algebras.

\subsection*{Applications to Poisson algebras}
Commutative Poisson algebras are algebraic counterparts of Poisson
manifolds from differential geometry: for a given smooth manifold
$M$, there is a one-to-one correspondence between Poisson brackets
on the commutative algebra $P := C^{\infty} (M)$ of smooth
functions on $M$ and all Poisson structures on $M$ \cite{gra2013}.
The importance of Poisson algebras in several areas of mathematics
and physics (Hamiltonian mechanics, differential geometry, Lie
groups, noncommutative algebraic/diferential geometry,
(super)integrable systems, quantum field theory, vertex operator
algebras, quantum groups) it is also well known -- see
\cite{gra1993, gra2013, LPV} and the references therein. In fact,
$ C^{\infty} (-)$ gives a contravariant functor from the category
of Poisson manifolds to the category of Poisson algebras and
constitutes the tool through which geometrical problems can be
restated and approached at the level of Poisson algebras. In
particular, the GE-problem, formulated for Poisson algebras, is
just the algebraic counterpart of the following question from
differential geometry: \emph{Let $M$ be a Poisson manifold.
Describe and classify all Poisson manifolds containing $M$ as a
manifold of a given codimension.}

We recall that a \emph{Poisson algebra} is a triple $P = (P, \,
m_P, \, [-, \, -])$, where $(P, m_P)$ is a unital associative
algebra, $(P, \, [-, \, -])$ is a Lie algebra such that the
Leibniz law holds for any $p$, $q$, $r\in P$:
\begin{equation}\eqlabel{p1}
[pq, \, r ] = [p, \, r] \, q + p \, [q, \, r]
\end{equation}
Usually, a Poisson algebra $P$ is by definition assumed to be
commutative like the algebra $P := C^{\infty} (M)$ of real smooth
functions on a Poisson manifold $M$ which is the typical example
of a Poisson algebra. However, following \cite{Farkas, kobo} in
order to broaden the class of Poisson algebras and to be able to
construct relevant examples, throughout this paper we do not
impose this restriction. A morphism between two Poisson algebras
$P$ and $P'$ is a linear map $\varphi: P \to P'$ that is a
morphism of associative algebras as well as of Lie algebras; we
will denote by ${\rm Aut}_{\rm Poss} (P)$ the group of
automorphisms of a Poisson algebra $P$. For basic concepts and
unexplained notions on Lie algebras we refer to \cite{EW} and to
\cite{LPV} for those concerning Poisson algebras.

Let $P = (P, \, m_P)$ be an algebra and $u\in k$. Then $(P, m_P,
[-, \, -]_u)$ is a Poisson algebra, where $[a, \, b]_u := u (ab -
ba)$, for all $a$, $b\in P$. In particular, any associative
algebra $P = (P, \, m_P)$ is a Poisson algebra with the abelian
Lie bracket, i.e. $[a, \, b] := 0$, for all $a$, $b\in P$. On the
other hand, let $\mathfrak{g} = (\mathfrak{g}, \, [-, \,
-]_{\mathfrak{g}} )$ be a Lie algebra with a linear basis $\{e_i
\, | \, i\in I \}$. Then the symmetric algebra $P := S
(\mathfrak{g})$ of $\mathfrak{g}$ (i.e. the polynomial algebra
$k[e_i \, | \, i\in I ]$) is a Poisson algebra with the bracket
defined by $[e_i, \, e_j] := [e_i, \, e_j]_{\mathfrak{g}}$, for
all $i$, $j \in I$ and extended to the entire algebra $k[e_i \, |
\, i\in I ]$ via the Leibniz law \equref{p1}.

If $P$ is a Poisson algebra then the direct product $P\times k$ is
a Poisson algebra with the direct product structures of
associative/Lie algebra: that is the multiplication and the
bracket is defined for any $p$, $q\in P$ and $x$, $y\in k$ by:
\begin{equation} \eqlabel{directproduct}
(p, x) \star (q, y) := (p q, \,\, xy), \qquad \{ (p, x), \, (q, y)
\} := \bigl( [p, q], \,\, 0 \bigl)
\end{equation}
Furthermore, the canonical projection $\pi_P : P \times k \to P$,
$\pi_P (p, \, x) := p$ is a surjective Poisson algebra map having
a $1$-dimensional kernel. It what follows we shall classify all
Poisson algebras $Q$ that admit a surjective Poisson algebra map
$Q \to P \to 0$ with a $1$-dimensional kernel. In order to do this
we first recall from \cite{am-2015} the following concept:

\begin{definition} \delabel{coflagab}
A \emph{co-flag datum} of a Poisson algebra $P$ is a $5$-tuple
$(\lambda, \, \Lambda, \, \vartheta, \, \gamma, \, f)$, where
$\lambda$, $\Lambda$, $\gamma: P \to k$ are linear maps,
$\vartheta$, $f : P\times P \to k$ are bilinear maps such that:
\begin{enumerate}
\item[(CF1)] $(\lambda, \, \Lambda, \, \vartheta)$ is a co-flag
datum of the first kind of the associative algebra $P$

\item[(CF2)] $\lambda( [p, \, q]) = \Lambda ([p, \, q]) =  \gamma
([p, \, q]) = f (p, p) = 0$

\item[(CF3)] $f(p, [q, \, r ] ) + f(q, [r, \, p ]) + f(r, [p, \,
q] ) + \gamma (p) f(q, r) + \gamma (q) f(r, p) + \gamma (r) f(p,
q) = 0$

\item[(CF4)] $f(pq, \, r) - \Lambda(q) f(p, \, r) - \lambda(p)
f(q, \,r) = \gamma (r) \vartheta (p, \, q) + \vartheta ([p, \, r],
\, q) + \vartheta (p, \, [q, \, r])$

\item[(CF5)] $\gamma (pq) = \gamma (p) \Lambda(q) + \lambda(p)
\gamma(q) $
\end{enumerate}
for all $p$, $q$, $r\in P$. We denote by ${\mathcal F} \, (P)$ the
set of all co-flag data of $P$.
\end{definition}
The above concept was introduced in \cite[Definition 3.2]{am-2015}
under the name of 'abelian co-flag datum' of $P$. We dropped the
adjective 'abelian' since the other family of co-flag datum
introduced in \cite[Definition 3.4]{am-2015} will generate a
family of Poisson algebras which are all isomorphic to the direct
product $P\times k$ (see the proof of \thref{pcodim1} below) --
hence, they are irrelevant from the classification view point.

Let $(\lambda, \, \Lambda, \, \vartheta, \, \gamma, \, f) \in
{\mathcal F} \, (P)$ be a co-flag datum of a Poisson algebra $P$.
Then we shall denote by $P_{(\lambda, \, \Lambda, \, \vartheta, \,
\gamma, \, f)} := P \times k$ the direct product of vector spaces
which is a Poisson algebra with the multiplication $\star$ and the
bracket $\{-, \, -\}$ defined \cite[Section 3]{am-2015} for any
$p$, $q\in P$, $x$, $y \in k$ by:
\begin{eqnarray}
(p, x) \star (q, y) &:=& (p q, \,\, \vartheta (p, q) + \lambda (p)
y + \Lambda (q) x )
\eqlabel{Icrpos1} \\
\{ (p, x), \, (q, y) \} &:=& \bigl( [p, q], \,\, f(p, q) + \gamma
(p) y - \gamma (q) x \bigl) \eqlabel{Icrpos2}
\end{eqnarray}

Now we can prove the following classification result: it is the
Poisson version of \thref{clasres} and improves the classification
given in \cite[Theorem 3.6]{am-2015} where all Poisson algebras
$Q$ having a Poisson surjection $Q \to P \to 0$ with a
$1$-dimensional kernel are classified in a more restrictive
fashion: up to an isomorphism which stabilizes $k$ and
co-stabilizes $P$.

\begin{theorem} \thlabel{pcodim1}
Let $P$ be a Poisson algebra. Then:

$(1)$ A Poisson algebra $Q$ has a surjective Poisson algebra map
$Q \to P \to 0$ with a $1$-dimensional kernel if and only if $Q
\cong P \times k$ or $Q \cong P_{(\lambda, \, \Lambda, \,
\vartheta, \, \gamma, \, f)}$, for some $(\lambda, \, \Lambda, \,
\vartheta, \, \gamma, \, f) \in {\mathcal F} \, (P)$.

$(2)$ Two Poisson algebras $P_{(\lambda, \, \Lambda, \, \vartheta,
\, \gamma, \, f)}$ and $P_{(\lambda', \, \Lambda', \, \vartheta',
\, \gamma', \, f')}$ are isomorphic if and only if there exists a
triple $(s_0, \, \psi, \, r) \in k^* \times {\rm Aut}_{\rm Poss}
(P) \times {\rm Hom}_k (P, \, k)$ such that for any $p$, $q \in
P$:
\begin{eqnarray}
&& \lambda = \lambda' \circ \psi, \qquad \Lambda = \Lambda' \circ \psi,
\qquad \gamma = \gamma' \circ \psi \eqlabel{P2015a} \\
&& \vartheta (p, q) \, s_0 = \vartheta' \bigl(\psi(p), \psi(q)
\bigl) + \lambda(p) r(q) + \Lambda(q) r(p) - r(pq)
\eqlabel{P2015b} \\
&& f(p, q) \, s_0 = f' \bigl(\psi (p), \, \psi(q)\bigl) + \gamma
(p) r(q) - \gamma (q) r(p) - r([p, \, q]) \eqlabel{P2015c}
\end{eqnarray}

$(3)$ The Poisson algebras $P_{(\lambda, \, \Lambda, \, \vartheta,
\, \gamma, \, f)}$ and $P \times k$ are not isomorphic.
\end{theorem}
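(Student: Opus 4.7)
The strategy is to adapt the proof of \thref{clasres} to the Poisson setting, leaning on the already established Poisson analogue of \coref{descefecti} given in \cite[Theorem 3.6]{am-2015}, which asserts that any Poisson algebra $Q$ equipped with a Poisson surjection $Q \to P$ of $1$-dimensional kernel is isomorphic to a Poisson algebra built from either an abelian co-flag datum (i.e.\ an element of ${\mathcal F}(P)$) or from the ``second-kind'' co-flag data introduced in \cite[Definition 3.4]{am-2015}. The first family produces precisely the algebras $P_{(\lambda, \Lambda, \vartheta, \gamma, f)}$, so part $(1)$ reduces to showing that every Poisson algebra coming from a second-kind co-flag datum is isomorphic, as a \emph{Poisson} algebra, to the direct product $P \times k$. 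To establish this I would take the explicit map \equref{izocaut} from \reref{nucleunul} -- which already trivialises the analogous associative statement -- and verify that it is in fact a morphism of Lie algebras from the second-kind Poisson product to the abelian-on-the-second-factor bracket \equref{directproduct} on $P \times k$. This is a direct computation using the Lie bracket defining a second-kind Poisson product (from \cite[Section~3]{am-2015}) together with the Leibniz compatibilities imposed on the data; the required constraints will force the extra ingredients to collapse, justifying the removal of the second-kind family from the final list.

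For part $(2)$ I would mimic step $(1)$ in the proof of \thref{clasres}. Any linear endomorphism of $P \times k$ can be written as $\varphi(p, x) = (\psi(p) + x\beta_0, \, r(p) + x s_0)$ for a quadruple $(s_0, \beta_0, \psi, r) \in k \times P \times {\rm Hom}_k(P, P) \times {\rm Hom}_k(P, k)$. Requiring $\varphi$ to be a morphism of associative algebras from $P_{(\lambda, \Lambda, \vartheta, \gamma, f)}$ to $P_{(\lambda', \Lambda', \vartheta', \gamma', f')}$ forces $\beta_0 = 0$, shows that $\psi$ is an associative endomorphism of $P$, and produces exactly \equref{P2015a} (for $\lambda$ and $\Lambda$) together with \equref{P2015b}, by the very same computations used in \thref{clasres}. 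Next, imposing the Lie-bracket compatibility $\varphi(\{(p,x), (q,y)\}) = \{\varphi(p,x), \varphi(q,y)\}$ on the generators $(p,0)$ and $(0,1)$ of $P \times k$ will add, via the explicit bracket formula \equref{Icrpos2}, the remaining constraints: $\psi$ is a Lie morphism (hence a Poisson map combined with the previous step), $\gamma = \gamma' \circ \psi$, and \equref{P2015c}. Bijectivity of $\varphi$ then reduces, exactly as in \thref{clasres}, to $s_0 \neq 0$ and $\psi \in {\rm Aut}_{\rm Poss}(P)$, with inverse built by the same formula.

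For part $(3)$ I would observe that a Poisson isomorphism is in particular an isomorphism of the underlying associative algebras. The underlying associative structure of $P_{(\lambda, \Lambda, \vartheta, \gamma, f)}$ is the first-kind Hochschild product $P_{(\lambda, \Lambda, \vartheta)}$, while the underlying associative structure of $P \times k$ is the usual direct product of algebras; step $(2)$ of the proof of \thref{clasres} already shows that these two associative algebras are never isomorphic, which yields the conclusion immediately. The main obstacle in the overall argument is the bookkeeping inside part $(2)$, where one must handle the associative and the Lie compatibility conditions simultaneously and verify that they decouple cleanly into the three independent conditions \equref{P2015a}--\equref{P2015c}; beyond this, no genuinely new idea past what is used in the associative case is required.
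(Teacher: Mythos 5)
Your proposal is correct and follows essentially the same route as the paper: reduce part $(1)$ to the two families from \cite{am-2015} and trivialise the second-kind family via the Poisson-upgraded map \equref{izocaut}, obtain part $(2)$ by layering the Lie-bracket compatibility on generators over step $(1)$ of the proof of \thref{clasres}, and deduce part $(3)$ from the associative non-isomorphism in step $(2)$ of that proof. No substantive difference from the paper's argument.
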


\begin{proof}
$(1)$ Let $Q$ be a Poisson algebra having a surjective Poisson
algebra map $Q \to P \to 0$ with $1$-dimensional kernel. Then,
using \cite[Proposition 2.4 and Proposition 3.5]{am-2015} we
obtain that $Q \cong P_{(\lambda, \, \Lambda, \, \vartheta, \,
\gamma, \, f)}$, for some $(\lambda, \, \Lambda, \, \vartheta, \,
\gamma, \, f) \in {\mathcal F} \, (P)$ or $Q \cong P^{(\lambda, \,
\vartheta, \, u)}$, where $u \in k \setminus \{0\}$, $\lambda: P
\to k$ is a unit preserving linear map, $\vartheta : P\times P \to
k$ is a bilinear map satisfying the following two compatibilities
for any $p$, $q$, $r\in P$:
\begin{equation} \eqlabel{noncofl}
\lambda (pq) = \lambda(p) \lambda(q) - u \, \theta (p, \, q),
\quad \theta (p, \, qr) - \theta (pq, \, r) = \theta (p, \, q)
\lambda(r) - \theta (q, \, r) \lambda(p)
\end{equation}
which we called in \cite[Definition 3.4]{am-2015} a non-abelian
co-flag datum of $P$. The Poisson algebra $P^{(\lambda, \,
\vartheta, \, u)}$ is the vector space $P \times k$ having the
multiplication and the Poisson bracket defined by:
\begin{eqnarray}
(p, x) \star (q, y) &:=& (p q, \,\, \vartheta (p, q) + \lambda (p)
y + \lambda (q) x + u\, xy)
\eqlabel{IIcrpos1} \\
\{ (p, x), \, (q, y) \} &:=& \bigl( [p, q], \,\, -u^{-1} \,
\lambda ([p, \, q]) \bigl) \eqlabel{IIcrpos2}
\end{eqnarray}
for all $p$, $q\in P$ and $x$, $y\in k$. Since $u \neq 0$ we
obtain from the first equation of \equref{noncofl} that
$\vartheta$ is implemented by $u$ and $\lambda$ by the formula
$\vartheta (p, \, q) = u^{-1} \Bigl( \lambda(p) \lambda(q) -
\lambda(pq) \Bigl)$ and hence the multiplication of the Poisson
algebra $P^{(\lambda, \, u)}$ given by \equref{IIcrpos1} takes the
form:
$$
(p, x) \star (q, y) = (p q, \,\, u^{-1} \Bigl( \lambda(p)
\lambda(q) - \lambda(pq) \Bigl) + \lambda (p) y + \lambda (q) x +
u\, xy)
$$
which is precisely \equref{patratarbitar}. The first part is
finished once we observe that the map given by the formula
\equref{izocaut}, namely $\varphi : P^{(\lambda, \, u)} \to P
\times k$, $\varphi (p, x) := (p, \, \lambda (p) + u\, x)$, for
all $p\in P$ and $x\in k$ is an isomorphism of Poisson algebras.
Indeed, \reref{nucleunul} shows that $\varphi$ is an isomorphism
of associative algebras; hence we only have to prove that it is
also a Lie algebra map, where the bracket on $P^{(\lambda, \, u)}$
is given by \equref{IIcrpos2}. A straightforward computation shows
that $ \varphi \Bigl( \{ (p, \, x), \, (q, \, y) \}\Bigl) =
[\varphi(p, \, x), \, \varphi(q, \, y) ]_{P\times k} = \bigl([p,
\, q], \, 0 \bigl)$ and thus any Poisson algebra $P^{(\lambda, \,
u)}$ is in fact isomorphic to the direct product of Poisson
algebras $P\times k$.

$(2)$ The first step in proving \thref{clasres} gives a bijection
between all associative algebra isomorphism corresponding to the
Poisson algebras $P_{(\lambda, \, \Lambda, \, \vartheta, \,
\gamma, \, f)}$ and $P_{(\lambda', \, \Lambda', \, \vartheta', \,
\gamma', \, f')}$ and the set of all triples $(s_0, \, \psi, \, r)
\in k^* \times {\rm Aut}_{\rm Alg} (P) \times {\rm Hom}_k (P, \,
k)$ satisfying \equref{P2015b} and the first two compatibilities
of \equref{P2015a}. The bijection is given such that the
associative algebra isomorphism $\varphi = \varphi_{(s_0, \, \psi,
\, r)}: P_{(\lambda, \, \Lambda, \, \vartheta, \, \gamma, \, f)}
\to P_{(\lambda', \, \Lambda', \, \vartheta', \, \gamma', \, f')}$
associated to $(s_0, \, \psi, \, r)$ is given by the formula
\equref{4morfb}, that is $\varphi (p, \, x) = \bigl( \psi(p), \,
r(p) + x\, s_0 \bigl)$, for all $p\in P$ and $x\in k$. The proof
will be finished if we show that such a map $\varphi =
\varphi_{(s_0, \, \psi, \, r)}$ is also a morphism of Lie algebras
if and only if $\psi$ is an automorphism of the Lie algebra $P =
(P, \, [-, \, -])$ and the last equation of \equref{P2015a} and
\equref{P2015c} hold. This is an elementary fact: by a
straightforward computation we can show that $\varphi \Bigl( \{(p,
0), \, (q, 0) \} \Bigl) = \{ \varphi (p, 0), \, \varphi (q, 0) \}$
if and only if $\psi: P \to P$ is a Lie algebra map and
\equref{P2015c} holds. In a similar fashion $\varphi \Bigl( \{(p,
0), \, (0, x) \} \Bigl) = \{ \varphi (p, 0), \, \varphi (0, x) \}$
if and only if $\gamma = \gamma' \circ \psi$. The rest of the
details are left to the reader.

$(3)$ Follows from step (2) of the proof of \thref{clasres} which
proves that the associative algebras $P_{(\lambda, \, \Lambda, \,
\vartheta, \, \gamma, \, f)}$ and $P \times k$ are never
isomorphic.
\end{proof}

\begin{corollary} \colabel{perfect}
Let $P$ be a Poisson algebra for which there is no algebra map $P
\to k$ or $P$ is perfect as a Lie algebra, i.e. $P = [P, \, P]$.
Then, up to an isomorphism, the only Poisson algebra $Q$ for which
there exists a surjective algebra map $Q \to P \to 0$ having a
$1$-dimensional kernel is the direct product $P \times k$ of
Poisson algebras.
\end{corollary}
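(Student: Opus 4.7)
The plan is to invoke \thref{pcodim1}(1), which asserts that any Poisson algebra $Q$ admitting a surjective Poisson map onto $P$ with $1$-dimensional kernel is isomorphic either to the direct product $P\times k$ or to some $P_{(\lambda, \, \Lambda, \, \vartheta, \, \gamma, \, f)}$ built from a co-flag datum $(\lambda, \, \Lambda, \, \vartheta, \, \gamma, \, f)\in{\mathcal F}(P)$. It therefore suffices to show that under either of the hypotheses of the corollary the set ${\mathcal F}(P)$ is empty, so that the second family cannot occur and the direct product is the unique possibility.

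First, I would recall from axiom (CF1) of \deref{coflagab} together with \deref{coflag} that the maps $\lambda$, $\Lambda: P\to k$ appearing in any co-flag datum of $P$ are \emph{unital algebra morphisms}. Hence, if $P$ admits no algebra map to $k$, then trivially ${\mathcal F}(P) = \emptyset$ and this case is settled immediately.

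For the second case, suppose $P = [P, \, P]$ and assume for contradiction that there exists a co-flag datum $(\lambda, \, \Lambda, \, \vartheta, \, \gamma, \, f)\in{\mathcal F}(P)$. Axiom (CF2) yields $\lambda([p, \, q]) = 0$ for all $p$, $q \in P$, so the algebra map $\lambda$ vanishes on the whole of $[P, \, P] = P$. In particular $\lambda(1_P) = 0$, which contradicts the unitality requirement $\lambda(1_P) = 1$. Thus ${\mathcal F}(P) = \emptyset$ in this case as well.

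Combining the two cases with \thref{pcodim1}(1) gives the conclusion: the only Poisson algebra $Q$ (up to isomorphism) surjecting onto $P$ with a $1$-dimensional kernel is $P \times k$. No serious obstacle is expected here; the entire argument is an immediate syntactic consequence of the axioms in \deref{coflagab} together with the classification already established in \thref{pcodim1}.
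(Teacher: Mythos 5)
Your proposal is correct and follows essentially the same route as the paper: both reduce the claim to showing ${\mathcal F}(P)=\emptyset$ via \thref{pcodim1}, handling the first hypothesis through the requirement in (CF1) that $\lambda$ and $\Lambda$ be unital algebra maps $P\to k$ (the paper routes this through \coref{casspecial}, which is the same observation), and the second through (CF2) forcing $\lambda$ to vanish on $[P,\,P]=P$, contradicting $\lambda(1_P)=1$.
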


\begin{proof}
The proof follows from \thref{pcodim1} since in both cases the set
${\mathcal F} \, (P)$ is empty. Indeed, the first case follows
form \coref{casspecial}, while if $P$ is perfect as a Lie algebra
then, using the compatibilities (CF2) we obtain that $\lambda =
\Lambda \equiv 0$, the trivial maps, which contradicts the fact
that the algebra maps $\lambda$ and $\Lambda$ are unit preserving.
\end{proof}

The geometrical meaning of \coref{perfect} is the following: if
$M$ is a real Poisson manifold such that the algebra $C^{\infty}
(M)$ of all real smooth functions on $M$ is perfect as a Lie
algebra, then up to an isomorphism, there is only one Poisson
manifold containing $P$ as a sub-manifold of codimension $1$. The
group of Poisson algebra automorphisms of $P_{(\lambda, \,
\Lambda, \, \vartheta, \, \gamma, \, f)}$ can also be described.
Using the proof of \thref{pcodim1}, the Poisson version of
\coref{izoaut} takes the following form:

\begin{corollary} \colabel{Pizoaut}
Let $P$ be a Poisson algebra, $(\lambda, \, \Lambda, \, \vartheta,
\, \gamma, \, f) \in {\mathcal F} \, (P)$ a co-flag datum of $P$
and let ${\mathcal G} {\mathcal P} \bigl(P, \, (\lambda, \,
\Lambda, \, \vartheta, \, \gamma, \, f) \bigl)$ be the set of all
triples $(s_0, \, \psi, \, r) \in k^* \times {\rm Aut}_{\rm Poss}
(P) \times P^*$ satisfying the compatibility conditions
\equref{P2015a}-\equref{P2015c} written for $\lambda' = \lambda$,
$\Lambda' = \Lambda$, $\gamma' = \gamma$, $\vartheta' = \vartheta$
and $f' = f$. Then, there exists an isomorphism of groups
$$
{\rm Aut}_{\rm Poss} \bigl(P_{(\lambda, \, \Lambda, \, \vartheta,
\, \gamma, \, f)} \bigl) \, \cong \, {\mathcal G} {\mathcal P} \,
\bigl(P, \, (\lambda, \, \Lambda, \, \vartheta, \, \gamma, \, f)
\bigl)
$$
where the latter is a group with respect to the following
multiplication:
$$
(s_0, \, \psi, \, r) \cdot (s_0', \, \psi', \, r') := (s_0 s_0',
\, \psi\circ \psi', \, r \circ \psi' + s_0 r' )
$$
for all $(s_0, \, \psi, \, r)$, $(s_0', \, \psi', \, r') \in \in
{\mathcal G} {\mathcal P} \, \bigl(P, \, (\lambda, \, \Lambda, \,
\vartheta, \, \gamma, \, f) \bigl)$. Moreover, the map
$$
{\mathcal G} {\mathcal P} \, \bigl(P, \, (\lambda, \, \Lambda, \,
\vartheta, \, \gamma, \, f) \bigl) \longrightarrow P^* \,
\ltimes_{\zeta} \bigl(k^* \times {\rm Aut}_{\rm Poss} (P) \bigl),
\qquad (s_0, \, \psi, \, r) \mapsto \bigl (s_0^{-1} r, \, (s_0, \,
\psi) \bigl)
$$
is an injective morphism of groups.
\end{corollary}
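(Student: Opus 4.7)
The plan is to derive this corollary as a straightforward Poisson-theoretic upgrade of \coref{izoaut}, leveraging directly the machinery already set up in \thref{pcodim1}. Let me sketch the three main steps.

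First, I would obtain the set-theoretic bijection ${\rm Aut}_{\rm Poss} \bigl(P_{(\lambda, \Lambda, \vartheta, \gamma, f)}\bigl) \cong {\mathcal G}{\mathcal P}\bigl(P, (\lambda, \Lambda, \vartheta, \gamma, f)\bigl)$ as a direct specialization of part $(2)$ of \thref{pcodim1}: take the target co-flag datum to equal the source, and notice that in that theorem the Poisson isomorphism associated to a triple $(s_0, \psi, r)$ is the map $\varphi_{(s_0, \psi, r)}(p, x) = (\psi(p), r(p) + x s_0)$ precisely when $(s_0, \psi, r) \in k^* \times {\rm Aut}_{\rm Poss}(P) \times P^*$ satisfies \equref{P2015a}--\equref{P2015c} with primes removed. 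Surjectivity and injectivity of the correspondence $\varphi \leftrightarrow (s_0, \psi, r)$ were already verified in the proof of \thref{pcodim1}.

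Next, I would transport the group structure of ${\rm Aut}_{\rm Poss}$ (given by composition of Poisson maps) to ${\mathcal G}{\mathcal P}$ via this bijection. The crucial computation is
\[
\varphi_{(s_0, \psi, r)} \circ \varphi_{(s_0', \psi', r')}(p, x) = \varphi_{(s_0, \psi, r)}\bigl(\psi'(p), r'(p) + x s_0'\bigl) = \bigl(\psi \circ \psi'(p), \, r(\psi'(p)) + s_0 r'(p) + x s_0 s_0'\bigl),
\]
which matches $\varphi_{(s_0 s_0', \, \psi \circ \psi', \, r \circ \psi' + s_0 r')}(p, x)$. Hence composition of Poisson automorphisms corresponds exactly to the multiplication \equref{graut} (now read in the Poisson setting), the unit is $(1, {\rm Id}_P, 0)$, and the inverse of $(s_0, \psi, r)$ is $(s_0^{-1}, \psi^{-1}, -s_0^{-1}(r \circ \psi^{-1}))$, which is automatically in ${\mathcal G}{\mathcal P}$ because the inverse of a Poisson automorphism is a Poisson automorphism. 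This settles the first isomorphism of groups.

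Finally, for the last assertion, I would verify directly that $\Phi(s_0, \psi, r) := \bigl(s_0^{-1} r, (s_0, \psi)\bigl)$ defines an injective morphism into $P^* \ltimes_\zeta \bigl(k^* \times {\rm Aut}_{\rm Poss}(P)\bigl)$, where $\zeta(s_0, \psi)(r) = s_0^{-1} r \circ \psi$ is precisely the action used in \coref{izoaut}, reinterpreted with $A$ replaced by $P$ and ${\rm Aut}_{\rm Alg}$ by ${\rm Aut}_{\rm Poss}$; injectivity is obvious since one recovers $(s_0, \psi, r)$ from $\Phi(s_0, \psi, r)$, and the homomorphism property is a routine bookkeeping exercise using the definition of the semidirect product multiplication and the composition rule established in the previous step.

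No serious obstacle arises here: the heavy lifting, namely characterizing which linear maps are Poisson isomorphisms between two algebras of the form $P_{(\lambda, \Lambda, \vartheta, \gamma, f)}$, was already carried out in \thref{pcodim1}, and the associative-algebra analogue \coref{izoaut} serves as a template. The only genuine new content is the observation that the triples $(s_0, \psi, r)$ now range over $k^* \times {\rm Aut}_{\rm Poss}(P) \times P^*$ (rather than $k^* \times {\rm Aut}_{\rm Alg}(P) \times P^*$) and satisfy the additional Lie-type compatibilities $\gamma = \gamma \circ \psi$ and \equref{P2015c}, which are exactly what the proof of \thref{pcodim1} $(2)$ extracted from the requirement that $\varphi_{(s_0, \psi, r)}$ preserve the Poisson bracket.
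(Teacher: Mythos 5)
Your proposal is correct and follows essentially the same route as the paper: the authors likewise obtain the bijection by specializing part $(2)$ of \thref{pcodim1} to the case where the target co-flag datum equals the source, transport the group law via the composition formula $\varphi_{(s_0, \psi, r)} \circ \varphi_{(s_0', \psi', r')} = \varphi_{(s_0 s_0', \, \psi \circ \psi', \, r\circ \psi' + s_0 r')}$ exactly as in \coref{izoaut}, and dispatch the embedding into the semidirect product as a routine verification. No gap.
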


We end the paper with a relevant example which follows by a long
computation based on \thref{pcodim1}; the detailed proof can be
provided upon request.

\begin{example} \exlabel{possheis}
Let $k$ be a field of characteristic zero and $P := {\mathcal H}
(3, \, k)$ the $3$-dimensional non-commutative Heisenberg-Poisson
algebra: i.e., ${\mathcal H} (3, \, k)$ is the set of all upper
triangular $2\times 2$ matrices with the usual multiplication and
the Lie bracket given by $[x, y] := xy - yx$. Consider
$\{e_{11},\, e_{12},\, e_{22}\}$ a basis of ${\mathcal H} (3, \,
k)$ over $k$. Then the set of isomorphism types of all
$4$-dimensional Poisson algebras which admit a surjective Poisson
algebra map on ${\mathcal H} (3, \, k)$ are the ones listed below
together with the usual direct product ${\mathcal H} (3, \, k)
\times k$ (we only write down the non-zero products):
\begin{eqnarray*}
&P_{1}: & \quad e_{11} \star e_{11} =  e_{11}, \,\, e_{11} \star
e_{12} = e_{12}, \,\, e_{12} \star e_{22} = e_{12}, \,\, e_{22}
\star e_{22} = e_{22},\\
&& \quad e_{11} \star f = f \star e_{11} = f, \,\,\,\,  \{e_{11},
\, e_{12}\} = e_{12}, \,\,  \{e_{12}, \, e_{22}\} = e_{12}.\\
&P_{2}: & \quad e_{11} \star e_{11} =  e_{11}, \,\, e_{11} \star
e_{12} = e_{12}, \,\, e_{12} \star e_{22} = e_{12}, \,\,  e_{22}
\star e_{22} = e_{22},\\
&& \quad e_{22} \star f = f \star e_{22} = f, \,\,\, \{e_{11}, \,
e_{12}\} = e_{12}, \,\,  \{e_{12}, \, e_{22}\} = e_{12}\\
&P_{3}: & \quad e_{11} \star e_{11} =  e_{11}, \, \, e_{11} \star
e_{12} = e_{12}-f, \,\, e_{12} \star e_{11} = f, \,\,
e_{12} \star e_{22} = e_{12}-f \\
&& \quad e_{22} \star e_{12} = f, \, \, e_{22} \star e_{22} =
e_{22},\,\, e_{11} \star f = f \star e_{22} = f, \,\, \{e_{11}, \,
e_{12}\} = e_{12},\\
&& \quad \{e_{12}, \,\, e_{22}\} = e_{12}, \,\, \{e_{11}, \, f\} =
f, \,\, \{e_{22},\, f\} = -f.
\end{eqnarray*}
\begin{eqnarray*}
&P_{4}^{\omega}: & \quad e_{11} \star e_{11} =  e_{11}, \, \,
e_{11} \star e_{12} = e_{12}, \,\, e_{12} \star e_{22} = e_{12},
\,\,  e_{22} \star e_{22} = e_{22},\\
&& \quad e_{11} \star f = f \star e_{22} = f, \,\,\,  \{e_{11}, \,
e_{12}\} = e_{12},\, \{e_{12}, \,\, e_{22}\} = e_{12},\,\\
&& \quad \{e_{11},\, f\} = \omega f, \, \, \{e_{22},\, f\} =
-\omega f, \,\, {\rm where} \,\, \omega \in k.\\
&P_{5}^{\tau}: & \quad e_{11} \star e_{11} =  e_{11}, \, \,\,
e_{11} \star e_{12} = e_{12}, \,\, e_{12} \star e_{22} = e_{12},
\,\, e_{22}\star e_{22} = e_{22},\\
&& \quad e_{22} \star f = f \star e_{11} = f, \,\,\,
\{e_{11},\, e_{12}\} = e_{12}, \,\, \{e_{12}, \, e_{22}\} = e_{12},\\
&& \quad \{e_{11}, \, f\} = \tau f,\,\, \{e_{22}, \, f\} = - \tau
f, \,\, {\rm where} \,\, \tau \in k.\\
\end{eqnarray*}
We point out that, even if up to an isomorphism there are only $8$
associative algebras of dimension $4$ with a surjective algebra
map on the algebra ${\mathcal H} (3, \, k)$ (indicated at the end
of \exref{Anamatrici}), the set of isomorphism types of Poisson
algebras having the same algebra structure can be infinite due to
the $1$-parameter families $P_{4}^{\omega}$ and $P_{5}^{\tau}$.
\end{example}

\section*{Acknowledgement}
Parts of this work were undertaken while the first named author
was visiting the Max Planck Institute for Mathematics in Bonn.
Their hospitality and financial support is gratefully
acknowledged.


\begin{thebibliography}{99}

\bibitem{adem}
Adem, A. and Milgram, R. J. - Cohomology of finite groups,
Springer, 2004.

\bibitem{am-2015b}
Agore, A.L. and Militaru, G. - Jacobi and Poisson algebras, {\sl
J. Noncommutative Geometry} {\bf 9} (2015), 1295--1342.

\bibitem{am-2015}
Agore, A.L. and Militaru, G. - The global extension problem,
crossed products and co-flag non-commutative Poisson algebras,
{\sl J. Algebra}, {\bf 426} (2015), 1--31.

\bibitem{am-2013}
Agore, A.L. and Militaru, G. - Extending structures for Lie
algebras, {\sl Monatsh. f\"{u}r Mathematik}, {\bf 174} (2014),
169--193.

\bibitem{am-2014}
Agore, A.L. and Militaru, G. - Extending structures, Galois groups
and supersolvable associative algebras, {\sl Monatsh. f\"{u}r Mathematik},  DOI:10.1007/s00605-015-0814-8. 

\bibitem{AMR2}
Alekseevsky, D., Michor,  P. W. and Ruppert, W. - Extensions of
super Lie algebras, {\sl J. Lie Theory} {\bf 15} (2005), 125--134.

\bibitem{AB}
Alperin, J.L. and Bell, R.R. - Groups and representations,
Springer-Verlag, New York, 1995.

\bibitem{AD}
Andruskiewitsch, N. and Devoto, J. - Extensions of Hopf algebras,
{\sl Algebra i Analiz} {\bf 7} (1995), 22--61.

\bibitem{barnes}
Barnes, D.W. and Newell, M.L. - Some theorems on saturated
homomorphs of soluble Lie algebras, {\sl Math. Z.}, {\bf 115}
(1970) 179–-187.

\bibitem{bavula}
Bavula, V.V. and Jordan, D.A. - Isomorphism problems and groups of
automorphisms for generalized Weyl algebras, {\sl Trans. Amer.
Math. Soc.}, {\bf 353} (2001), 769–-794.

\bibitem{BW}
Brzezi\'nski, T. and Wisbauer, R. - Corings and comodules, London
Math Soc. Lect. Note Ser., {\bf 309}, Cambridge University Press,
2003

\bibitem{CML}
Castiglioni, J. L., García-Martínez, X. and Ladra, M. - Universal
central extensions of Lie-Rinehart algebras, arXiv:1403.7159

\bibitem{cek}
Ceken, S., Palmieri, J., Wang Y.- H. and Zhang, J.J. - The
discriminant controls automorphism groups of noncommutative
algebras, {\sl Adv. Math.}, {\bf 269} (2015), 551--584.

\bibitem{CE}
Chevalley, C. and Eilenberg S. - Cohomology theory of Lie groups
and Lie algebras, {\sl Trans. Amer. Math. Soc.}, {\bf 63}(1948),
85--124.

\bibitem{EM}
Eilenberg, S. and Maclane, S. - Cohomology theory in abstract
groups, II, {\sl Annals of Mathematics}, {\bf 48}(1947), 326--341.

\bibitem{EW}
Erdmann, K. and Wildon, M.J. - Introduction to Lie algebras,
Springer, 2006.

\bibitem{Ev}
Everett, C. J., Jr. - An extension theory for rings, {\sl Amer. J.
Math.}, {\bf 64} (1942), 363–-370

\bibitem{Farkas}
Farkas, D. R. and Letzter, G. - Ring theory from symplectic
geometry, {\sl J. Pure Appl. Algebra}, {\bf 125}(1998), 155–-190.


\bibitem{fr}
Fr\'{e}gier, Y. - Non-abelian cohomology of extensions of Lie
algebras as Deligne groupoid, {\sl J. Algebra}, {\bf 398} (2014),
243–-257.

\bibitem{gra2}
de Graaf, W.A. - Classification of Solvable Lie Algebras, {\sl
Experiment. Math.}, {\bf 14} (2005), 15--25.

\bibitem{gra1993}
Grabowski, J., Marmo, G. and Perelomov, A. M. - Poisson
structures: towards a classification, {\sl Modern Phys. Lett. A}
{\bf 8}(1993), 1719--1733.

\bibitem{gra2013}
Grabowski, J. -  Brackets, {\sl Int. J. Geom. Methods Mod. Phys.},
10(8):1360001, 45, 2013.

\bibitem{jan}
Janusz, G. J. - Automorphism group of simple algebras and group
algebras, Representation Theory of Algebras, Lecture Notes in Pure
and Applied Mathematics, Vol. {\bf 37} (1976), 381–-388.

\bibitem{hue}
Huebschmann, J. - Poisson cohomology and quantization, {\sl J.
Reine Angew. Math.}, {\bf 408} (1990), 57-–113.

\bibitem{hpr}
Huebschmann, J., Perlmutter, M. and Ratiu, T.S. - Extensions of
Lie-Rinehart algebras and cotangent bundle reduction, {\sl. Proc.
London Math. Soc.}, {\bf 107} (2013), 1135--1172.

\bibitem{Hoch2}
Hochschild, G. - Cohomology and representations of associative
algebras, {\sl Duke Math. J.}, {\bf  14}(1947), 921--948.

\bibitem{horn}
Horn, R. and Sergeichuk, V. - Canonical matrices of bilinear and
sesquilinear forms, {\sl Linear Algebra and its Applications},
{\bf 428} (2008), 193–-223.

\bibitem{holder}
O. H\"{o}lder, {Bildung zusammengesetzter Gruppen}, {\sl Math.
Ann.} {\bf 46}(1895), 321--422.

\bibitem{kobo}
Kubo, F. - Finite-dimensional non-commutative Poisson algebras,
{\sl J. Pure Applied Algebra}, {\bf 113} (1996), 307--314.

\bibitem{LPV}
Laurent-Gengoux, C., Pichereau, A. and Vanhaecke, P. - Poisson
Structures, Vol. 347, 2013, Springer.

\bibitem{LoP}
Loday, J.-L. and  Pirashvili, T. - Universal enveloping algebras
of Leibniz algebras and (co)homology, {\sl Math. Ann.},  {\bf
296}(1993), 139--158.

\bibitem{mili}
Milies, C. P. and Sehgal S.K. - An introduction to group rings,
Kluwer Academic Publishers, 2002.

\bibitem{Mi2013}
Militaru, G. - The global extension problem, co-flag and
metabelian Leibniz algebras, \emph{Linear Multilinear Algebra}
{\bf 63} (2015), 601--621.

\bibitem{oli}
Olivieri, A., del Rio, A. and Sim\'{o}n, J. J. - The Group of
Automorphisms of the Rational Group Algebra of a Finite Metacyclic
Group, {\sl Comm. in Algebra}, {\bf 34} (2006), 3543–-3567.

\bibitem{redondo}
Redondo, M. J. - Hochschild cohomology: some methods for
computations, IX Algebra Meeting USP/UNICAMP/UNESP (Sao Pedro,
2001), {\sl Resenhas IME-USP}, {\bf 5}(2001), 113--137.

\bibitem{rotman}
Rotman, J.J. - Advanced Modern Algebra, New York: Prentice Hall,
2nd Edition, 2003.

\bibitem{sch}
Schreier, O. - Uber die Erweiterung von Gruppen, I, {\sl
Monatshefte f\"{u}r Mathematik und Physik}, {\bf 34} (1926),
165--180.

\bibitem{weibel}
Weibel, C.A. - An introduction to homological algebra, Cambridge
University Press, 1994.

\bibitem{will}
Williamson, J. - On the algebraic problem concerning the normal
forms of linear dynamical systems, {\sl Amer. J. Math.}, {\bf 58}
(1936), 141--163.
\end{thebibliography}
\end{document}